\newtheorem{definition}{Definition}[section]
\newtheorem{corollary}[definition]{Corollary}
\newtheorem{theorem}[definition]{Theorem}
\newtheorem{example}[definition]{Example}
\newtheorem{lemma}[definition]{Lemma}
\newtheorem{proposition}[definition]{Proposition}
\newtheorem{remark}[definition]{Remark}
\newcommand{\apolar}[2]{\langle #1, #2 \rangle}
\def\1{\mathbf{1}}
\def\Hom{\mathrm{Hom}}
\def\Hilbr{\mathrm{Hilb}_{r} (\PP^{n})} 
\def\Hilbredr{\mathrm{Hilb}_{r}^{\mathrm{red}} (\PP^{n})} 
\def\Hilbs{\mathrm{Hilb}_{s} (\PP^{n})}
\def\Gr{\mathrm{Gr}}
\def\Kak{\mathcal{K}}
\def\kk{\mathbb{K}}
\def\KK{\mathbb{K}}
\def\NN{\mathbb{N}}
\def\PP{\mathbb{P}}
\def\uf{\underline{f}}
\def\db{\mathbf{d}}
\def\lb{\mathbf{l}}
\def\kb{\mathbf{k}}
\def\xb{\mathbf{x}}
\def\ub{\mathbf{u}}
\def\Cat{\mathtt{H}}
\def\Gh{\mathcal{G}}
\def\Eh{\mathcal{E}}
\def\Hkl{\mathtt{H}}
\def\Smooth{\mathcal{S}}
\def\rsmooth{r_{\mathrm{smooth}}}
\def\rank{\mathrm{rank\ }}
\begin{document} 
\title{A comparison of different notions of ranks of symmetric tensors} 
\author{Alessandra Bernardi}
\address{Universit\`a di Torino, Dipartimento di Matematica ``Giuseppe Peano'', I-10123 Torino, Italy} 
\email{<alessandra.bernardi@unito.it>}
\author{\& J\'er\^ome Brachat \& Bernard Mourrain}
\address{GALAAD, INRIA M\'editerran\'ee, F-06902 Sophia Antipolis, France} 
\email{<Firstname.Familyname@inria.fr>}
\begin{abstract}
  We introduce various notions of rank for a {{high order}} symmetric tensor, namely:
  rank, border rank, catalecticant rank, generalized rank, scheme length, border scheme length, extension rank
  and smoothable rank. We analyze the stratification induced by these ranks. The mutual
  relations between these strati\-fications, allow us to describe the hierarchy
  among all the ranks. We show that strict inequalities are possible
  between rank, border rank, extension rank and catalecticant rank. Moreover we show
  that scheme length, generalized rank and extension rank coincide. 
\end{abstract}

\maketitle

\section*{Introduction}

The tensor decomposition problem arises in many applications (see
\cite{pre05968745} and references therein). {{Because of many analogies with the matrix Singular Value Decomposition (SVD), this multilinear generalization to high order tensors that we are going to consider, is often called ``higher-order singular vale decomposition (HOSVD)" (\cite{dldmv}). HOSVD is a linear algebra method}} often
used as a way to recover geometric or intrinsic informations,
``hidden'' in the tensor data. For a given tensor with a certain
structure, this problem consists in finding the minimal decomposition
into indecomposable tensors with the same structure. The best known
and studied case is the one of completely symmetric tensors (see
examples in \cite{Card98:procieee}, \cite{Como92:elsevier},
\cite{cglm-simax-2008}), i.e. homogeneous polynomials. The minimum
number $r$ of indecomposable symmetric tensors $v_i^{\otimes d}$'s
(pure powers of linear forms $l_i$'s) needed to write a given
symmetric tensor $T$ of order $d$ (that is a homogeneous polynomial
$f$ of degree $d$) is called the \emph{rank} $r(T)$ of $T$ (the \emph{rank}
$r(f)$ of $f$):
$$
T= \sum_{i=1}^rv_i^{\otimes d}; \; \; \; f=\sum_{i=1}^r l_i^d.
$$
{{Observe that when $d=2$, i.e. when the tensor $T$ is a matrix
    (i.e. when the homogeneous polynomial is a quadric), this
    coincides with the standard definition of rank of a matrix. In
    that case, a tensor decomposition of a symmetric matrix (that can
    be obtained by SVD computation) of rank $r$, will allow to write it as a linear combination of $r$ symmetric matrices of rank 1.

From now on, with an abuse of notation, we will denote with ``$f$'' both a symmetric tensor and its associate homogeneous polynomial.}}

From a geometric point of view, {{saying that a symmetric tensor $f$ has rank $r$, means that it}} is in
the $r$-th secant of the Veronese variety in the projective 
space of polynomials of degree $d$.
The order $r_{\sigma}(f)$ of the smallest secant variety to the Veronese variety
containing a given $f$ is called the \emph{border rank} of $T$ and may differs from the rank of
$f$ (see Example \ref{ex:rk:monomial}). 

A first method to decompose a {{high order}} symmetric tensor is classically attributed to Sylvester and it works for tensors $f\in V^{\otimes d}$  with $\dim V =2$ (i.e. for binary forms). Such a method (see for a modern reference \cite{MR2754189}) is based on the
analysis of the kernel of so-called catalecticant matrices associated
to the tensor.  This leads to the notion of \emph{catalecticant rank} $r_H(f)$
of a tensor $f$, which is also called ``differential length'' in
\cite{ia-book-1999}[Definition 5.66, p.198]. 

Extending the apolarity approach of Sylvester, an algorithm to compute
the decomposition and the rank of a symmetric tensor $f$ in any
dimension was described in \cite{BracCMT09:laa}. The main ingredient of this
work is an algebraic characterization of the property of flat
extension of a catalecticant matrix. This extension property is not
enough to characterize tensors with a given rank, since the underlying
scheme associated to the catalecticant matrix extension should also be
reduced. To get a better insight on this difference, we introduce
hereafter the notions of \emph{extension rank} $ r_{\Eh^0}(f)$ and  \emph{border extension rank} $ r_{\Eh}(f)$ of $f$, and analyze the
main properties.
 
Another approach leading to a different kind of algorithm is proposed
in \cite{BGI11} and it is developed for some cases. The idea there, is
to classify all the possible ranks of the polynomials belonging to
certain secant varieties of Veronese varieties in relation with
the structure of the embedded non reduced zero-dimensional schemes
whose projective span is contained in that secant variety.  In
\cite{Buczynska:2010kx}, the authors clarify the structure of the
embedded schemes whose span is contained in the secant varieties of
the Veronese varieties. Moreover they introduce an algebraic variety,
namely the $r$-th cactus variety $\Kak_{r}^{{d}}$. This lead us to the notion of what we will call the \emph{border scheme length}
$r_{\mathrm{sch}}(f)$ of a polynomial $f$. We will show that this notion is related to the {\em scheme length} associated to $f$ defined in
(\cite{ia-book-1999}[Definition 5.1, p. 135, Definition 5.66,
p. 198]), we will call it  the \emph{scheme length} which is sometimes called the \emph{cactus rank} of a homogeneous polynomial $f$ (see \cite{MR2842085} for a first definition of it). 

Another notion related to the scheme length and called the
\emph{smoothable rank} $r_{\mathrm{smooth}^0} (f)$ of a homogeneous polynomial $f$ is also
used in \cite{ia-book-1999}[Definition 5.66, p. 198] or
\cite{MR2842085}.  Instead of considering all
the schemes of length $r$ apolar to $f$, one considers only the
smoothable schemes, that  are the schemes which are the limits of smooth
schemes of $r$ simple points. 
Analogously we can define the \emph{border smoothable rank} $\rsmooth
(f)$ of a homogeneous polynomial $f$, as the smallest $r$ such that $f$
belongs to the closure of the set of tensors of smoothable rank $r$.

In relation with the ``generalized additive
decomposition'' of a homogeneous polynomial $f$, there is the so called ``length of $f$'': it was introduced for binary forms
in \cite{ia-book-1999}[Definition 1.30, p. 22], and extended to any  form in 
\cite{ia-book-1999}[Definition 5.66, p. 198]. In this paper we will describe a new generalization of 
the notion of \emph{generalized affine decomposition} of a homogeneous polynomial $f$ (see Definition \ref{gendec})
and study the corresponding  \emph{generalized rank} $r_{\Gh^0} (f)$. Again there is a notion of \emph{border generalized rank} $r_{\Gh} (f)$.
\vskip.08in

As in the classical tensor decomposition problem, the decompositions
associated to these different notions of rank can be useful to analyze
geometric information ``hidden'' in a {{high order}} tensor. 
The purpose of  this paper is to relate all these notions of rank. {{This will give an algebraic geometric insight to a multilinear algebra concept as HOSVD}}. 

In Corollary \ref{hierarchy} we will show that the generalized rank, the scheme length and the flat extension rank coincide:
$$   r_{\Gh^0}(f) = r_{\mathrm{sch^0}}(f) = r_{\Eh^0}(f).  $$
and hence their respective ``border versions":
$r_{\Gh}(f) = r_{\mathrm{sch}}(f) = r_\Eh(f).$

We can summarize the relations among the ranks in the following table:
\begin{equation}\label{table} r_H(f) \leq
\left\{\begin{array}{ccc}
r_{\Gh}(f) & \leq & r_{\Gh^0}(f) \\
\begin{rotate}{90} = \end{rotate}&& \begin{rotate}{90} = \end{rotate}\\
r_{\mathrm{sch}}(f) & \leq & r_{\mathrm{sch^0}}(f) \\
\begin{rotate}{90} = \end{rotate}&& \begin{rotate}{90} = \end{rotate}\\
r_{\Eh}(f) & \leq & r_{\Eh^0}(f)\\
\begin{rotate}{90} $\geq$ \end{rotate}&&\begin{rotate}{90} $\geq$ \end{rotate}\\
r_{\mathrm{smooth}}(f) & \leq &  r_{\mathrm{smooth^0}}(f)
\\ \begin{rotate}{90} = \end{rotate}&&
\\ r_{\sigma}(f)&&  
\end{array}\right\}
\leq r(f).
\end{equation}

\def\niente{
$$r_H(f) \leq r_{\Gh}(f) = r_{\mathrm{sch}}(f) = r_\Eh(f)  \leq $$
$$  \leq r_{\Gh^0}(f) = r_{\mathrm{sch^0}}(f) = r_{\Eh^0}(f)  \leq r_{\mathrm{smooth}^0} (f) \leq r(f).
$$
The notion of border rank seems to be skew to the previous notions. We can say that:
$$r_H(f)\leq r_{\Gh}(f) \leq r_{\sigma}(f) = \rsmooth (f)\leq r_{\mathrm{sch^0}}(f) \leq r(f),$$
but with respect to $r_{\mathrm{sch^0}}(f)$ one can find examples where 
 $r_{\sigma}(f)> r_{\mathrm{sch^0}}(f)$ and examples where $r_{\sigma}(f)<r_{\mathrm{sch^0}}(f)$.
 }

Let $\Gh_{r}^{{d,0}} $, $\Kak_{r}^{{d,0}} $ and $ \Eh_{r}^{{d,0}}$ be the
sets of homogeneous polynomial of degree $d$ in a given number of
variables of generalized rank, scheme length and extension rank
respectively less than or equal to $r$ and let $\Gh_{r}^{{d}} $, $\Kak_{r}^{{d}} $ and $ \Eh_{r}^{{d}}$ their Zariski closures.
The main results of this paper is Theorem \ref{prop:direct} 
where we show that 
$$
\Gh_{r}^{{d,0}} = \Kak_{r}^{{d,0}} = \Eh_{r}^{{d,0}},
$$
and hence (Corollary \ref{cor}) that
$$
\Gh_{r}^{{d}} = \Kak_{r}^{{d}} = \Eh_{r}^{{d}}.
$$
\vskip.08in
The paper is organized as follows. After the preliminary Section \ref{Preliminaries} where we {{introduce some preliminary material on multilinear algebra and algebraic geometry needed for further developments}}, we will define, in Section \ref{Ranks}, all the definitions of rank that we want to study and for each one of them we will give detailed examples. In Sections \ref{gensection} 
we will prove our main results. 

\section{Preliminaries}\label{Preliminaries}

\subsection{Notations}
Let $S=\kk[\xb]$ be the graded polynomial
ring in the variables $\xb= (x_{0},\ldots, x_{n})$
over an algebraically closed field $\kk$ of characteristic
0. For $d\in \NN$, let $S^{{d}}$ be the the vector
space spanned by the homogeneous polynomials of degree
${d}$ in $S$.  We denote by
$R=\kk[\underline{\xb}]$ the ring of
polynomials in the variables $\underline{\xb}= (x_{1},\ldots,
x_{n})$ and by $R^{\leq {d}}$ the vector space of polynomials
in $R$ of degree $\le {d}$.  An ideal $I\subset S$ is {\em
  homogeneous} if it can be generated by homogeneous elements.


For $f\in S^{{d}}$, we denote by $\underline{f}=f
(1,x_{1},\ldots,x_{n})\in R^{\le d}$ the polynomial
obtained by substituting $x_{0}$ by $1$. This defines a bijection
between $S^{d}$ and $R^{\le d}$, which depends on the system of 
coordinates chosen to represent the polynomials.
For $f\in R$, we define $f^{h} (x_{0},\ldots, x_{n})= x_{0}^{\deg (f)}
f ({x_{1}\over x_{0}}, \ldots, {x_{n}\over x_{0}})$ and we call it the
{\em homogenization} of $f$.
A set $B$ of monomials of $R$ is {\em connected to $1$} if it contains
$1$ and if $m \neq 1\in B$ then there exists $1\le i\le n$ and $m'\in
B$ such that $m= x_{i} m'$. For a set $B$ of monomials in $R$,
$B^{+}=B \cup x_{1} B\cup \cdots \cup x_{n},B$.

We denote by $\PP^{n}:= \PP (\kk^{n+1})$ the projective space of
dimension $n$. A point in $\PP^{n}$ which is the class of the non-zero
element $\mathbf{k}= (k_{0}, \ldots, k_{n}) \in \kk^{n+1}$ modulo the
collinearity relation is denoted by $[\mathbf{k}] = (k_{0}: \cdots:
k_{n})$.
An ideal $I\subset S$ is homogeneous if it is generated by homogeneous polynomials.
For a homogeneous ideal $I\subset S$, the set of points $[\mathbf{k}]\in \PP^{n}$ such
that $\forall f \in I, f(\mathbf{k})=0$ is
denoted $V_{\PP^{n}} (I)$.
We say that an ideal $I\subset S$ is {\em zero-dimensional} if
$V_{\PP^{n}} (I)$ is finite and not empty. 
We say that $\zeta\in V_{\PP^{n}} (I)$ is {\em simple}  if the
localization $(S/I)_{\mathbf{m}_{\zeta}}$ of $S/I$ at the maximal
ideal ${\mathbf{m}_{\zeta}}$ associated to $\zeta$ is of dimension
$1$ (cf. \cite{AthyiaMacdonald}).
An ideal $I$ of $S$ is {\em saturated} if $(I:S^{1})=I$.

We will denote with $I^d$ the dedree $d$ part of an ideal $I$. The {\em Hilbert function} associated to $I$ evaluated at $d\in \NN$ is
$H_{S/I} (d)=\dim (S^{d}/I^{d})$.
When $I$ is zero-dimensional, the Hilbert function becomes equal to a constant
$r\in \NN$ for
$d\gg 0$. When moreover $I$ is saturated, this happens when $d\ge r$
(see e.g. \cite{Gotz78} for more details).

For a homogeneous ideal $I\subset S$, let $\underline{I}$ be the
ideal of $R$, generated by the elements $\uf$ for $f\in I$. We recall
that if $H_{S/I} (d)=r$ for $d\gg 0$ and if $x_{0}$ is a non-zero divisor
in $S/I$, then $R/\underline{I}$ is a $\kk$-vector space of dimension
$r$. 
Conversely, if $\tilde{I}$ is an ideal of $R$ such that $\dim_{\kk} (R/\tilde{I})=r$ then the 
homogeneous ideal $I =\{f^{h}\mid f\in \tilde{I}\}$ is saturated,
$x_{0}$ is a non-zero divisor in $S/I$ and $H_{S/I} (d)=r$ for $d\ge r$.

\begin{remark} If $I$ is a saturated ideal of $S$ and $(I:x_{0})=I$, then we have the natural isomorphism for
$d\in \NN$:
$$ 
S^{d}/I^{d} \simeq R^{\le d}/\underline{I}^{\le d}.
$$
\end{remark}
 
For a point $\mathbf{k}= (k_{0}, \ldots, k_{n})  \in \kk^{n+1}$, we define a
corresponding element 
$\mathbf{k}(\xb) \in S^{1}$ 
as $\mathbf{k}(\xb)=k_{0}x_{0}+ \cdots + k_{n}x_{n}$. The element $\mathbf{k}(\xb)$ is unique, up to a
non-zero multiple: it corresponds to a unique element $[\mathbf{k}
(\xb)]$ in $\PP (S^{1})$. In the following, we will use the same
notation $\mathbf{k}=\mathbf{k} (\xb)$ to denote either an element of $\kk^{n+1}$ or of $S^{1}$. The following product is sometimes called ``Bombieri product" or ``Sylvester product".


\begin{definition}\label{apolarprod}
For all $f, g$ $\in S^{d}$,  we define the {\em apolar product} on $S^{d}$ as follows:
$$ 
\apolar{f}{g} = \sum_{| \alpha | = d} f_{ \alpha}\,g_{ \alpha}\, {d \choose { \alpha}}.
$$ 
where
$f= \sum_{{|\alpha}|= d }
f_{{ \alpha}} \, {d \choose { \alpha} }\,\xb^{ \alpha},
g= \sum_{{|\alpha}|= d }
g_{{ \alpha}} \, {d \choose { \alpha} }\,\xb^{ \alpha}$,
${d \choose { \alpha} } = {d! \over \alpha_{0}!\cdots \alpha_{n}!}$
for $|\alpha|=\alpha_{0}+ \cdots + \alpha_{n}=d$.
It can also be defined on $R^{\le d}$ in such a way that
for all $f, g$ $\in S^{d}$,  $\apolar{\underline{f}}{\underline{g}} =
\apolar{f}{g}$ (just by replacing $x_{0}$ by $1$ in the previous formula).
\end{definition}

For any vector space $E$, we denote by $E^{*}=\Hom_{\KK} (E,\KK)$ its 
{\em dual space}.
Notice that the dual $S^{*}$ is an $S$-module: $\forall \Lambda \in S^{*},
\forall p\in S, p\cdot \Lambda: q \mapsto \Lambda (p\, q)$.

For any homogeneous polynomial $f\in S^{{d}}$, we define the element
$f^* \in (S^{d})^*$ as follows: 
$$
\forall g \in S^{d}, f^*(g)= \apolar{f}{g}.
$$ 
Similarly, $\underline{f}^* \in (R^{\le d})^*$ is defined so that 
$\forall g \in S^{d}, \underline{f}^*(\underline{g})
= \apolar{\underline{f}}{\underline{g}}
= \apolar{f}{g}$.

Let $I$ be an ideal of $S$. The {\em inverse system}
$I^{\bot}$ of $I$  is the $S$-submodule of elements of $S^{*}$ that
vanish on $I$, i.e. $I^{\bot}=\{\Lambda \in 
S^{*}\mid \forall f \in I, \Lambda (f)=0\}$.

For $D \subset R^{*}$, we define $D^{\bot}\subset R$ as 
$$ 
D^{\bot} := \{ p \in R \mid \forall \Lambda \in D, \Lambda (p)=0\}.
$$
We check that if $D$ is a $R$-module, then $D^{\bot}$ is an ideal. 

When $I$ is a homogeneous ideal, an element in $I^{\bot}$ is a sum
(not necessarily finite) of elements in $(I^{d})^{\bot}$.
\begin{remark}\label{dimperp} The dimension of the degree $d$ part of the
inverse system of an ideal $I\subset S$ is the Hilbert function of
$S/I$ in degree ${d}$: 
$$
H_{S/I} ({d})=\dim_K(I^{d})^{\bot}=\mathrm{codim} (I^{d}).
$$ 
\end{remark}

We denote by $(\db^{\alpha})_{|\alpha|=d}$ the basis of  $(S^{d})^{*}$ that is dual to the standard monomial basis $(\xb^{\beta})_{|\beta|=d}$ of $S^d$, more precisely  $\db^{\alpha}=
\db_{0}^{\alpha_{0}}\cdots\db_{n}^{\alpha_{n}}$ and $\db^{\alpha} (\xb^{\beta}) =1$
if $\alpha=\beta$ and $0$ otherwise. 
An element in $(S^{d})^{*}$ is represented by a homogeneous
polynomial of degree $d$ in the {\em  dual variables}
$\db_{0},\ldots,\db_{n}$. It will also be called a {\em dual polynomial}.

We remark that $x_{i}\cdot \db^{\alpha}= \db^{\alpha_{0}}\cdots
\db_{i-1}^{\alpha_{i-1}}\db_{i}^{\alpha_{i}-1}\db^{\alpha_{i+1}}\cdots\db_{n}^{\alpha_{n}}$
if $\alpha_{i}>0$ and $0$ otherwise. More generally, for any $\Lambda
\in (S^{d})^{*}$ represented by a dual polynomial of degree $d$, we have that
$x_{i}\cdot \Lambda$ is either $0$ or a dual polynomial of degree
$d-1$. It is formally obtained by multiplying by $\db_{i}^{-1}$
and by keeping the terms with positive exponents.  This property explains the
name of {\em inverse system} introduced by F.S. Macaulay \cite{Mac16}.  The
dual monomials are also called {\em divided powers} in some works, when
a structure of ring is given to $S^{*}$ (see
e.g. \cite{ia-book-1999}[Appendix A]), but this structure is not
really needed in the following. It comes from the description of
$\db^{\alpha}$ in terms of differentials: $\forall p \in S$,
$$ 
\db^{\alpha} (p) = {1\over \alpha!} \partial_{0}^{\alpha_{0}}
\cdots \partial_{n}^{\alpha_{n}} (p) (0,\ldots,0),
$$
where $\alpha! =\prod_{i=0}^{n}\alpha_{i}!$.

For $D \subset S^{*}$, we define the {\em inverse system} generated by
$D$ as the $S$-module of $S^{*}$ generated by $D$, that is the vector
space spanned by the elements of the form $\xb^{\alpha}\cdot \Lambda$
for $\alpha \in \NN^{n+1}$ and $\Lambda \in D$.
\begin{example} The inverse system generated by $\db_{0}\db_{1}$ is
  $\langle \db_{0}\db_{1}, \db_{0}, \db_{1}, 1\rangle$. It is a
  vector space of dimension $4$ in $\kk[\db_{0}, \db_{1}]$.
\end{example}
 
By extension, the elements of $S^{*}$ can be represented by a formal
power series in the variables $\db_{0}, \ldots, \db_{n}$.

By restriction, the elements $R^{*}$ are represented by formal
power series in the dual variables $\db_{1}, \ldots, \db_{n}$.
The elements of $(R^{\le t})^{*}$ are represented by polynomials of
degree $\le t$ in the variables $\db_{1}, \ldots, \db_{n}$.
The structure of $R$-module of $R^{*}$ shares the same
properties as $S^{*}$: $x_{i}$ acts as the ``inverse'' of $\db_{i}$.
We define the inverse system spanned by $D \subset R^{*}$ as 
the $R$-module of $R^{*}$ generated by $D$.

For a non-zero point $\mathbf{k} \in \kk^{n+1}$, we define
the evaluation $1_{\mathbf{k}}^{d}\in (S^{d})^{*}$ at $\mathbf{k}$ as
\begin{eqnarray*} 
1_{\mathbf{k}}^{d}: S^{d} & \rightarrow & \kk\\
   p & \mapsto & p (\mathbf{k}) 
\end{eqnarray*}
In the following, we may drop the exponent $d$ to simplify notations
when it is implicitly defined. 

To describe the dual of zero-dimensional ideals defining points with
multiplicities, we need to consider differentials. For $\mathbf{k}\in
\kk^{n+1}$ and $\alpha= (\alpha_{0}, \ldots, \alpha_{n})\in
\NN^{n+1}$, we defined 
\begin{eqnarray*} 
1_{\mathbf{k}}\circ \partial^{\alpha}: S & \rightarrow & \kk\\
   p & \mapsto & \partial_{0}^{\alpha_{0}}
\cdots \partial_{n}^{\alpha_{n}} (p) (\mathbf{k}) .
\end{eqnarray*}
We extend this definition by linearity, in order to define
$1_{\mathbf{k}}\circ \phi (\partial)\in 
S^{*}$ for any polynomial $\phi (\partial)$ in the {\em
 differential variables} $\partial_{0}, \ldots, \partial_{n}$.
We check that the inverse system generated by 
$1_{\mathbf{k}}\circ \phi(\partial)$ 
is the vector space spanned by the elements of the form
$1_{\mathbf{k}}\circ \phi'(\partial)$ where $\phi'$ is obtained from
$\phi$ by possibly several derivations with respect to the
differential variables $\partial_{0}, \ldots, \partial_{n}$. It is
a finite dimensional vector space.

This leads to the following result, which characterizes the dual of a
zero-dimensional (affine) ideal (see e.g. \cite{Ems78} or \cite{EM08}[Theorem 7.34,
p. 185]).
\begin{theorem}\label{thm:dual:diff}
Suppose that $I\subset R$ is such that $\dim_{\kk} (R/I) =r<
\infty$. Then $\forall \Lambda \in I^{\bot}$, there exist
distinct points $\zeta_{1}, \ldots, \zeta_{s} \in V_{\kk^{n}} (I)$ and differential
polynomials $\phi_{1}, \ldots, \phi_{s}$ in the variables
$\partial_{1}, \ldots, \partial_{n}$ such that
$$ 
\Lambda = \sum_{i=1}^{s} 1_{\zeta_{i}} \circ \phi_{i} (\partial).
$$
\end{theorem}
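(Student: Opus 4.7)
My plan is to reduce the statement to a local statement at each point of $V_{\kk^{n}}(I)$ via primary decomposition, and then identify the local dual with differential polynomials at the origin using Macaulay's classical description of the inverse system in the Artinian local case.

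First, since $\dim_{\kk}(R/I)<\infty$, the ideal $I$ is zero-dimensional and admits a minimal primary decomposition $I=Q_{1}\cap\cdots\cap Q_{s}$ whose radicals are the distinct maximal ideals $\mathbf{m}_{\zeta_{i}}$ with $\zeta_{1},\ldots,\zeta_{s}\in V_{\kk^{n}}(I)$. The $\mathbf{m}_{\zeta_{i}}$ are pairwise comaximal, hence so are the $Q_{i}$, and the Chinese Remainder Theorem yields
$$
R/I \;\cong\; \bigoplus_{i=1}^{s} R/Q_{i}.
$$
Dualizing gives $I^{\bot}\cong\bigoplus_{i=1}^{s}Q_{i}^{\bot}$, so any $\Lambda\in I^{\bot}$ splits uniquely as $\Lambda=\sum_{i}\Lambda_{i}$ with $\Lambda_{i}\in Q_{i}^{\bot}$. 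It is thus enough to show that for each $i$ one has $\Lambda_{i}=1_{\zeta_{i}}\circ\phi_{i}(\partial)$ for some differential polynomial $\phi_{i}$ in the variables $\partial_{1},\ldots,\partial_{n}$.

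Next, fix $i$ and set $\zeta=\zeta_{i}$, $Q=Q_{i}$. The translation automorphism $\tau_{\zeta}\colon x_{j}\mapsto x_{j}+\zeta_{j}$ of $R$ maps $\mathbf{m}_{0}=(x_{1},\ldots,x_{n})$ to $\mathbf{m}_{\zeta}$, so after pulling $\Lambda_{i}$ back along $\tau_{\zeta}$ we may assume $\zeta=0$. Since $R/Q$ is an Artinian local ring with maximal ideal $\mathbf{m}_{0}/Q$, there exists $N\in\NN$ with $\mathbf{m}_{0}^{N}\subset Q$, hence $Q^{\bot}\subset(\mathbf{m}_{0}^{N})^{\bot}$. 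The orthogonal $(\mathbf{m}_{0}^{N})^{\bot}$ is the finite-dimensional space of linear functionals on $R$ vanishing on every monomial $\xb^{\alpha}$ with $|\alpha|\geq N$, and it is spanned by the functionals dual to $\xb^{\beta}$ for $|\beta|<N$. A direct computation gives $1_{0}\circ\partial^{\beta}(\xb^{\alpha})=\beta!\,\delta_{\alpha,\beta}$, so each such dual functional equals $\tfrac{1}{\beta!}\,1_{0}\circ\partial^{\beta}$, whence $\Lambda_{i}=1_{0}\circ\phi_{i}(\partial)$ for a polynomial $\phi_{i}$ of degree $<N$. Translating back by $\tau_{\zeta}$ and summing over $i$ produces the required expression.

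The main delicate point is the compatibility between translation on $R$ and the differential description on the dual side: one must verify that the pullback along $\tau_{\zeta}$ sends $1_{\zeta}\circ\phi(\partial)$ to $1_{0}\circ\phi(\partial)$, which comes from the commutation $\partial_{j}\circ\tau_{\zeta}=\tau_{\zeta}\circ\partial_{j}$, i.e.\ from the translation invariance of the constant coefficient differential operators $\partial_{j}$. The other ingredients (primary decomposition of a zero-dimensional ideal, the Chinese Remainder Theorem for comaximal primaries, and the identification of $(\mathbf{m}_{0}^{N})^{\bot}$ with the span of the dual monomials) are standard, and the finiteness of $N$ is automatic because $R/Q$ is Artinian local.
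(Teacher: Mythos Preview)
The paper does not prove this theorem; it is quoted as a classical result with references to Emsalem and to Elkadi--Mourrain (Theorem~7.34). Your argument is correct and is essentially the standard proof one finds in those sources: primary decomposition together with the Chinese Remainder Theorem reduces to a single $\mathbf{m}_{\zeta}$-primary component, translation reduces to $\zeta=0$, and then nilpotence of the maximal ideal in the Artinian local quotient gives $\mathbf{m}_{0}^{N}\subset Q$, so that $Q^{\bot}\subset(\mathbf{m}_{0}^{N})^{\bot}=\langle\,1_{0}\circ\partial^{\beta}:|\beta|<N\,\rangle$.

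One cosmetic slip: with your convention $\tau_{\zeta}(x_{j})=x_{j}+\zeta_{j}$, the image of $\mathbf{m}_{0}$ is $(x_{1}+\zeta_{1},\ldots,x_{n}+\zeta_{n})=\mathbf{m}_{-\zeta}$, not $\mathbf{m}_{\zeta}$; equivalently, it is $\tau_{\zeta}$ that sends $\mathbf{m}_{\zeta}$ to $\mathbf{m}_{0}$. This only affects which direction you call ``pulling back'' and has no bearing on the argument, since the translation invariance $\partial_{j}\circ\tau_{\zeta}=\tau_{\zeta}\circ\partial_{j}$ that you invoke is exactly what is needed to transport $1_{0}\circ\phi(\partial)$ to $1_{\zeta}\circ\phi(\partial)$.
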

As a consequence, we check that the inverse system generated by
$\Lambda$ is the direct sum of the inverse systems  $D_{i}$ generated by 
$1_{\zeta_{i}} \circ \phi_{i} (\partial)$ for $i=1, \ldots, s$. The sum of
the dimensions of these inverse systems is thus 
$\le \dim_{\kk} (I^{\bot})=\dim_{\kk} (R/I)=r$.

\begin{proposition} \label{prop:mult:dual}
Let $\Lambda = \sum_{i=1}^{s} 1_{\zeta_{i}} \circ \phi_{i} (\partial)$,
$D$ be the inverse system (or $R$-module) generated by $\Lambda$ and
$D_{i}$ be the inverse system generated by $1_{\zeta_{i}} \circ \phi_{i}
(\partial)$ for $i=1, \ldots, s$. Then 
$D^{\bot} = Q_{i} \cap \cdots \cap Q_{s}$ where 
\begin{itemize}
 \item $Q_{i}=D_{i}^{\bot}$ is a primary ideal for the maximal ideal $\mathbf{m}_{\zeta_{i}}$ defining $\zeta_{i}$,

 \item $\mu_{i} = \dim_{\kk} (D_{i}) = \dim_{\kk} (R/Q_{i})$ is the
   multiplicity of $\zeta_{i}$,
 \item $\dim R/ D^{\bot} =\sum_{i=1}^{s} \mu_{i}$. 
\end{itemize}
\end{proposition}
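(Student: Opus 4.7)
The plan is to reduce everything to the duality $D_i \leftrightarrow R/Q_i$ and then use that the different primary components sit at different maximal ideals. The excerpt already guarantees that $D=\bigoplus_{i=1}^{s} D_{i}$, so the very first observation is that
\[
D^{\bot}=\bigl(\textstyle\sum_i D_i\bigr)^{\bot}=\bigcap_{i=1}^{s}D_i^{\bot}=\bigcap_{i=1}^{s}Q_i.
\]
All the content of the proposition is therefore packed into what happens for one index $i$, and the CRT-style combination of the $Q_i$.

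Next I would prove that $Q_i=D_i^{\bot}$ is $\mathbf{m}_{\zeta_i}$-primary. Two inclusions suffice. For $Q_i\subset \mathbf{m}_{\zeta_i}$, note that $1_{\zeta_i}$ itself lies in $D_i$ (obtained from $1_{\zeta_i}\circ \phi_i(\partial)$ by applying to it the partial derivatives that reduce $\phi_i$ to a non-zero constant); if $g\in Q_i$ then $g\cdot 1_{\zeta_i}=0$, i.e.\ $q\mapsto g(\zeta_i)q(\zeta_i)$ is the zero functional, forcing $g(\zeta_i)=0$. Conversely, if $N=\deg \phi_i$ and $g\in \mathbf{m}_{\zeta_i}^{N+1}$, the Leibniz rule applied to any $\phi'(\partial)(gq)(\zeta_i)$ with $\deg\phi'\le N$ shows that every term contains a factor $g$ differentiated at most $N$ times, hence vanishing at $\zeta_i$. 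Thus $\mathbf{m}_{\zeta_i}^{N+1}\subset Q_i\subset \mathbf{m}_{\zeta_i}$, so $\sqrt{Q_i}=\mathbf{m}_{\zeta_i}$. Since $D_i$ is finite dimensional (being finitely generated and stable under the lowering operators $x_j$), $R/Q_i$ is a finite dimensional $\kk$-algebra with a unique minimal prime, hence local Artinian; every zero-divisor is therefore nilpotent, which is the definition of a primary ideal.

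For the dimensions I would invoke the standard $\kk$-linear duality between a finite dimensional $R$-submodule $D_i\subset R^{*}$ and $R/D_i^{\bot}$: the perfect pairing $R\times D_i \to \kk,\ (q,\Lambda)\mapsto \Lambda(q)$ is non-degenerate on the right by construction and its left kernel is exactly $Q_i$, yielding $\dim_\kk D_i=\dim_\kk R/Q_i$. Write this common number $\mu_i$. To identify it with the \emph{multiplicity} of $\zeta_i$ in $V(D^\perp)$, observe that $D^\perp=\bigcap_i Q_i$ is already a primary decomposition with pairwise distinct associated primes $\mathbf{m}_{\zeta_i}$; localizing at $\mathbf{m}_{\zeta_i}$ kills all the other components (since those $Q_j$ contain elements outside $\mathbf{m}_{\zeta_i}$), giving $(R/D^{\bot})_{\mathbf{m}_{\zeta_i}}\simeq R/Q_i$, so $\mu_i$ is exactly the local length at $\zeta_i$.

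Finally the total-dimension formula is the Chinese Remainder Theorem: the ideals $Q_1,\dots ,Q_s$ are pairwise coprime because their radicals are the distinct maximal ideals $\mathbf{m}_{\zeta_i}$, so
\[
R/D^{\bot}=R/\bigcap_{i=1}^{s}Q_i\;\simeq\;\prod_{i=1}^{s} R/Q_i,
\]
and taking $\kk$-dimensions gives $\dim_\kk R/D^{\bot}=\sum_{i=1}^{s}\mu_i$. The only step that requires genuine care rather than bookkeeping is the Leibniz estimate showing $\mathbf{m}_{\zeta_i}^{N+1}\subset Q_i$; everything else is either the direct-sum statement already granted, elementary $\kk$-linear duality, or CRT.
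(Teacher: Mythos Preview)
The paper does not actually give a proof of this proposition: it is stated and immediately followed by an example, so there is nothing to compare against line by line. Your argument is a correct and standard way to fill in the details. You use exactly the ingredients the paper has set up just before the statement (the direct-sum decomposition $D=\bigoplus_i D_i$, the description of the inverse system generated by $1_{\zeta_i}\circ\phi_i(\partial)$ in terms of derivatives of $\phi_i$), and from there the three bullet points follow by elementary commutative algebra: the Leibniz estimate giving $\mathbf{m}_{\zeta_i}^{N+1}\subset Q_i\subset \mathbf{m}_{\zeta_i}$, the $\kk$-linear duality $D_i\simeq (R/Q_i)^{*}$, and the Chinese Remainder Theorem for pairwise comaximal primary ideals. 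This is presumably the kind of verification the authors had in mind when they left the proposition unproved.
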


\begin{example}
Let us consider the ideal $I= (x_{1}^{2}+x_{2}-1, x_{2}^{2}-1)$ of
$R=\kk[x_{1}, x_{2}]$. It defines the points
$(0,1)$, $(\sqrt{2},-1)$, $(-\sqrt{2},-1)$ $\in \kk^{2}$. An element $\Lambda\in I^{\bot}$ can be decomposed as
$$ 
\Lambda = 1_{(0,1)} \circ (a_{1} \partial_{1} + b_{1}) + \lambda_{2} 1_{(\sqrt{2} ,1)} + \lambda_{3} 1_{(-\sqrt{2} ,-1)}
$$
where $a_{1}, b_{1}, \lambda_{2}, \lambda_{3} \in \kk$.
If $a_{1}\neq 0,\lambda_{2}\neq 0, \lambda_{3}\neq 0 $, then the inverse
system spanned by $\Lambda$ is
$$
\langle 1_{(0,1)} \circ \partial_{1},  1_{(0,1)}, 1_{(\sqrt{2} ,1)},
1_{(-\sqrt{2} ,-1)}\rangle.
$$
\end{example}

\begin{lemma}\label{lem:evdual}
Suppose that $I$ is a saturated ideal defining $r$ simple points
$[\zeta_{1}], \ldots, [\zeta_{r}] \in \PP^{n}$. Then
$(I^{d})^{\bot}$ is spanned by $\1_{\zeta_{1}}, \ldots, \1_{\zeta_{r}}$
for $d\ge r$.
\end{lemma}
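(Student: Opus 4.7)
The plan is to match a dimension count against an explicit linear independence argument. Since $I$ is saturated, zero-dimensional, and defines $r$ simple points, the Hilbert function of $S/I$ stabilizes at $r$ for $d \gg 0$; moreover, because all points are simple (no embedded structure), standard Castelnuovo–Mumford type estimates guarantee $H_{S/I}(d) = r$ already for $d \ge r$, as noted in the paper's discussion of saturated zero-dimensional ideals. By Remark \ref{dimperp}, this gives $\dim_{\KK} (I^{d})^{\bot} = H_{S/I}(d) = r$ for $d\ge r$. So I am reduced to producing $r$ linearly independent elements of $(I^{d})^{\bot}$, and the $\1_{\zeta_{i}}^{d}$ are the natural candidates.

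First I would verify containment: for every $f \in I^{d}$ one has $\1_{\zeta_{i}}^{d}(f)=f(\zeta_{i})=0$ since $\zeta_{i}\in V_{\PP^{n}}(I)$, so indeed $\1_{\zeta_{i}}^{d}\in (I^{d})^{\bot}$ for each $i=1,\ldots,r$. The only real content is linear independence. For this I would produce a separating family: since the $[\zeta_{i}]$ are pairwise distinct in $\PP^{n}$, for each pair $i\neq j$ there is a linear form $\ell_{ij}\in S^{1}$ with $\ell_{ij}(\zeta_{j})=0$ and $\ell_{ij}(\zeta_{i})\neq 0$. Set $q_{i}=\prod_{j\neq i}\ell_{ij}\in S^{r-1}$, which satisfies $q_{i}(\zeta_{i})\neq 0$ and $q_{i}(\zeta_{j})=0$ for $j\neq i$. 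Picking any linear form $\ell_{i}\in S^{1}$ not vanishing at $\zeta_{i}$, the polynomial $p_{i}=q_{i}\,\ell_{i}^{d-r+1}\in S^{d}$ (which makes sense because $d\ge r$) still separates $\zeta_{i}$ from the other points.

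Now if $\sum_{i=1}^{r}\lambda_{i}\1_{\zeta_{i}}^{d}=0$ in $(S^{d})^{*}$, evaluating at $p_{k}$ gives $\lambda_{k}\,p_{k}(\zeta_{k})=0$, hence $\lambda_{k}=0$ for every $k$. Thus the $r$ functionals $\1_{\zeta_{1}}^{d},\ldots,\1_{\zeta_{r}}^{d}$ are linearly independent inside the $r$-dimensional space $(I^{d})^{\bot}$, and therefore form a basis, which is the claim.

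The only delicate point I expect is justifying the exact threshold $d\ge r$ for the Hilbert function to equal $r$; my argument for linear independence actually works as soon as $d\ge r-1$, so the binding constraint comes from the dimension formula, and I would cite the standard fact that for a saturated zero-dimensional ideal in $S$ with $H_{S/I}$ eventually equal to $r$ one has $H_{S/I}(d)=r$ for $d\ge r$ (already recalled in the Preliminaries, e.g.\ via \cite{Gotz78}).
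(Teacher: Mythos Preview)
Your proof is correct and follows essentially the same strategy as the paper's: establish the containment $\langle \1_{\zeta_1},\ldots,\1_{\zeta_r}\rangle \subset (I^d)^{\bot}$, invoke Remark~\ref{dimperp} together with the Hilbert-function stabilization at $d\ge r$ to get $\dim (I^d)^{\bot}=r$, and conclude by a dimension match. The only difference is that you spell out the linear independence of the $\1_{\zeta_i}^d$ via an explicit Lagrange-type separating family, whereas the paper simply asserts $\dim\langle \1_{\zeta_1},\ldots,\1_{\zeta_r}\rangle=r$ without further comment.
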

\begin{proof} Obviously $\langle \1_{\zeta_{1}}, \ldots, \1_{\zeta_{r}} \rangle \subset (I^d)^{\bot}$. Moreover,
as already observed in Remark \ref{dimperp}, we have that
$\dim(I^d)^{\bot}=H_{S/I}(d)$. Therefore, for $d\geq r$,
$\dim(I^d)^{\bot}=r=\dim \langle \1_{\zeta_{1}}, \ldots,
\1_{\zeta_{r}} \rangle$ and
$(I^d)^{\bot}= \langle \1_{\zeta_{1}}, \ldots, \1_{\zeta_{r}} \rangle$. 
\end{proof}

\subsection{Tensor decomposition problem}
The main problem we are interested in, is the problem of  {\em decomposition of
a symmetric tensor} into a sum of minimal size of indecomposable terms
which are the powers of a linear forms:
\begin{definition}\label{def:decomp}
An element $f \in S^{d}$ has a decomposition of size $r$ if there
exist distinct non-zero elements $\kb_{1}, \ldots, \kb_{r} \in S^{1}$ such that 
\begin{equation}
f= \kb_{1}^{d} + \cdots + \kb_{r}^{d}.
\end{equation}
\end{definition}
This problem is also called the {\em Generalized Waring
  problem} as it generalizes the problem of Waring in arithmetic \cite{MR1146921}.
 
In order to find a decomposition of $f\in S^{d}$ as a sum of ${d}$-th
powers of linear forms, we will consider the polynomials which are
apolar to $f$ and use the following result.

\begin{lemma} For all $g\in S^{d},
\kb  \in S^{1}$ with
${\kb} =k_{0}x_{0}+ \cdots + k_{n}x_{n}$, $k_{j}\in
\mathbb{K}$, for $j=0, \ldots , n$, it turns out that
$$ 
\apolar{g}{\kb^{{d}}}=g(\kb),
$$ 
where $g(\kb)=g(k_{0},\ldots,k_{n})$.
\end{lemma}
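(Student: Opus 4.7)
The plan is to prove this directly from the definition of the apolar product given in Definition \ref{apolarprod}, the key input being the multinomial theorem applied to the linear form $\kb$. The statement is essentially a clean identity that says: pairing a polynomial $g$ against the ``normalized'' power $\kb^d$ in the apolar product amounts to evaluating $g$ at the coefficient vector of $\kb$.

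First, I would write $g \in S^d$ in the basis that matches the normalization used in Definition \ref{apolarprod}, namely
\[
g = \sum_{|\alpha| = d} g_\alpha \binom{d}{\alpha} \xb^\alpha.
\]
Note that with this convention, the evaluation $g(\kb) = g(k_0, \ldots, k_n)$ equals $\sum_{|\alpha|=d} g_\alpha \binom{d}{\alpha} k^\alpha$, where $k^\alpha = k_0^{\alpha_0} \cdots k_n^{\alpha_n}$.

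Next, I would apply the multinomial theorem to expand
\[
\kb^d = (k_0 x_0 + \cdots + k_n x_n)^d = \sum_{|\alpha|=d} \binom{d}{\alpha} k^\alpha \xb^\alpha = \sum_{|\alpha|=d} k^\alpha \binom{d}{\alpha}\xb^{\alpha},
\]
which identifies the coefficients of $\kb^d$ in the normalized basis as $(\kb^d)_\alpha = k^\alpha$. Substituting into Definition \ref{apolarprod} gives
\[
\apolar{g}{\kb^d} = \sum_{|\alpha| = d} g_\alpha \, k^\alpha \binom{d}{\alpha},
\]
and this is exactly the expansion of $g(\kb)$ computed above, so $\apolar{g}{\kb^d} = g(\kb)$.

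There is no real obstacle here: the only subtlety is bookkeeping of the $\binom{d}{\alpha}$ factors, which appear both in the chosen normalization of the monomial basis and in the multinomial expansion of $\kb^d$, and these factors combine consistently to produce the evaluation $g(\kb)$. If desired, one could alternatively deduce the identity by polarization from the special case $g = \mathbf{m}^d$ for $\mathbf{m} \in S^1$, where it reduces to $\apolar{\mathbf{m}^d}{\kb^d} = \mathbf{m}(\kb)^d$, but the direct multinomial computation is the shortest route.
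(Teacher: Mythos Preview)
Your proof is correct and follows essentially the same approach as the paper: both expand $\kb^d$ via the multinomial theorem to read off $(\kb^d)_\alpha = k^\alpha$ in the normalized basis, then substitute into the definition of the apolar product to obtain $\sum_{|\alpha|=d} g_\alpha\, k^\alpha \binom{d}{\alpha} = g(\kb)$. Your write-up is slightly more explicit about the bookkeeping of the $\binom{d}{\alpha}$ factors, but the argument is the same.
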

\begin{proof}
By an explicit computation, we have 
$\kb^{{d}} = 
\sum_{|{ \alpha}|= d}
\, {d \choose { \alpha} }
\prod_{j=0}^{n} k_{j}^{\alpha_{j}} 
\prod_{j=0}^{n} x_{j}^{\alpha_{j}}.$
Thus 
$\apolar{g}{\kb^{{d}}} = 
\sum_{|{ \alpha}|= d}
\, {d \choose { \alpha}} 
g_{\alpha}
\prod_{j=0}^{n} k_{j}^{\alpha_{j}} 
 = g (\mathbf{k}).$
\end{proof}
 
Thus if $f = \kb_1^{d}+ \cdots + \kb_{r}^{d}$ with $\kb_{i} \in S^{1}$ and if $g\in S^{k}$ is such that $g(\kb_{i})=0$ for $i=1,\ldots,
r$, then for all $h\in S^{d-k}$ we have
$$ 
\apolar{g\, h}{f}=0.
$$

This shows that the ideal of polynomials vanishing at the points
$\kb_{1},\ldots, \kb_{r} \in \PP^{n}$ is in the set of polynomials apolar to $f$.
It leads us to the following definition (see also \cite{ia-book-1999} where the same definition is given via an apolar product that differs from our Definition \ref{apolarprod} only because it is not defined as an inner product but as a product between $S^d$ and $S^{d*}$).

\begin{definition}[Apolar ideal] Let $f\in S^{d}$.
We define the {\em  apolar ideal} of $f$ as the
homogeneous ideal of $S$ generated by $S^{{d}+1}$ and
by the polynomials $g\in S^{i}$ (${0}\leq {i} \leq d$) such that
$\apolar{g h}{f}=0$ for all $h\in S^{d-i}$. 
It is denoted $(f^{\bot})$.
\end{definition}

\begin{example} For $f := x_{0}^{\alpha_{0}}\cdots
 x_{n}^{\alpha_{n}}$ with $\alpha_{0} + \cdots + \alpha_{n} =d$, we
 have $f^{*} = {d \choose \alpha}^{-1}\, \db_{0}^{\alpha_{0}}\cdots \db_{n}^{\alpha_{n}}$ and  $(f^{\bot})=
 (x_{0}^{\alpha_{0}+1}, \ldots, x_{n}^{\alpha_{n}+1})$.
\end{example}
 
Hereafter, we will need the following standard lemma.
\begin{lemma} \label{lem:apolar:saturated}
For any 
ideal $I\subset S$,  $\apolar{I^{{d}}}{f}=0$ if and only if 
$I\subset (f^{\bot})$.
\end{lemma}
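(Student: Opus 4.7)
The plan is to prove the two implications separately, using the generating description of $(f^{\bot})$.

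For the forward direction, assume $\apolar{I^{d}}{f}=0$. Since the ideal $I$ (which we may take to be homogeneous in this context) is the sum of its homogeneous parts, it suffices to check that every homogeneous $g\in I\cap S^{i}$ lies in $(f^{\bot})$. If $i\ge d+1$, then $g\in S^{d+1}\cdot S \subset (f^{\bot})$ by construction. If $i\le d$, pick any $h\in S^{d-i}$; then $g\,h\in I^{d}$, so by hypothesis $\apolar{g\,h}{f}=0$. Since this holds for every such $h$, the polynomial $g$ satisfies the defining apolar condition for a generator of $(f^{\bot})$ in degree $i$, hence $g\in (f^{\bot})$.

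For the reverse direction, assume $I\subset (f^{\bot})$ and let $p\in I^{d}$. Write a decomposition of $p$ in terms of generators of $(f^{\bot})$, namely $p=\sum_{j} q_{j}\, g_{j}$, where each $g_{j}$ is either an element of $S^{d+1}$ or a homogeneous polynomial $g_{j}\in S^{i_{j}}$ with $i_{j}\le d$ satisfying $\apolar{g_{j}\, h}{f}=0$ for every $h\in S^{d-i_{j}}$. Extracting the degree-$d$ homogeneous component of both sides and using that any summand with $g_{j}\in S^{d+1}$ contributes in degree $\ge d+1$ only, we may assume every $g_{j}$ is of the apolar type and that $q_{j}\in S^{d-i_{j}}$. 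Then
$$
\apolar{p}{f}=\sum_{j}\apolar{q_{j}\,g_{j}}{f}=0
$$
by the defining property of each $g_{j}$. Since this holds for every $p\in I^{d}$, we conclude $\apolar{I^{d}}{f}=0$.

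The argument is essentially bookkeeping once one unwinds the definition of $(f^{\bot})$; there is no real obstacle. The only point that requires a little care is the forward implication when $i=0$: if $g\in I$ is a nonzero constant, then $I=S$, and the assumption $\apolar{S^{d}}{f}=0$ forces $f=0$ (since the apolar pairing on $S^{d}$ is non-degenerate), in which case $(f^{\bot})=S$ and the inclusion is trivial. Otherwise the homogeneous decomposition argument goes through verbatim.
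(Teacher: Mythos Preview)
Your proof is correct and follows essentially the same approach as the paper. The paper phrases the forward direction in terms of colon ideals (using $J^{i}:S^{k}=J^{i-k}$ for $J=(f^{\bot})$ and $I^{d-k}\subset I^{d}:S^{k}$), while you unwind the same argument element by element; for the reverse direction the paper simply says ``clearly'' where you explicitly decompose $p$ in terms of generators of $(f^{\bot})$, but the content is the same.
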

\begin{proof}
Clearly, if $I\subset (f^{\bot})$ then $I^{{d}}\subset (f^{\bot})^{{d}}$
so that $\apolar{I^{{d}}}{f}=0$. 

Let us prove the reverse inclusion. 
By definition of the apolar ideal $J:=(f^{\bot})$,
we have $J^{{i}}:S^{{k}}= J^{{i-k}}, \ \forall \ {0} \leq {i} \leq {d}, \ {0} \leq {k} \leq {i}$.
We also have $I^{{d}}:S^{{k}} \supset
I^{{d-k}}, \ \forall \ {0} \leq {k} \leq {d}$. 
The hypothesis $\apolar{I^{{d}}}{f}=0$ implies that 
$I^{{d}} \subset J^{{d}}$.
We deduce that
$I^{{i}}  \subset J^{{i}}, \ \forall \ 0 \leq {i} \leq {d}$.
Since $J^{{d}+1}=S^{{d}+1}$, we have the inclusion $I\subset J=(f^{\bot})$.
\end{proof}

The tensor decomposition problem can be reformulated in terms of
apolarity as follows via the well known Apolarity Lemma (cf. \cite[Lemma 1.15]{ia-book-1999}).

\begin{proposition}\label{prop:reform}
A symmetric tensor $f\in S^{{d}}$ has a decomposition of size $s \leq r$ iff there exits an
ideal $I\subset S$ such that 
\begin{itemize}
 \item[(a)] $I\subset (f^{\bot})$,
 \item[(b)] $I$ is saturated, zero dimensional, of degree $\leq r$,
 \item[(c)] $I$ is defining simple points.
\end{itemize}
\end{proposition}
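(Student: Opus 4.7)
The plan is to translate the decomposition condition for $f$ into an apolarity statement via the map $g\mapsto g^{*}$, and then to use the saturation hypothesis on $I$ to recover the points $[\zeta_{i}]$ from their evaluation functionals.

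\textbf{Forward direction.} Given $f=\kb_{1}^{d}+\cdots+\kb_{s}^{d}$ with $s\le r$ distinct nonzero $\kb_{i}\in S^{1}$, take $I\subset S$ to be the saturated homogeneous ideal of the $s$ distinct points $[\kb_{1}],\ldots,[\kb_{s}]\in\PP^{n}$. Conditions (b) and (c) are immediate from this construction. For (a), let $g\in I^{i}$ with $0\le i\le d$ and $h\in S^{d-i}$; since $g$ vanishes at each $[\kb_{j}]$, the identity $\apolar{g}{\kb^{d}}=g(\kb)$ proved just before Definition~\ref{def:decomp} gives
$$
\apolar{gh}{f}=\sum_{j=1}^{s}\apolar{gh}{\kb_{j}^{d}}=\sum_{j=1}^{s}g(\kb_{j})\,h(\kb_{j})=0,
$$
so $g\in (f^{\bot})^{i}$. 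Combined with $I^{d+1}\subset S^{d+1}\subset (f^{\bot})$, this yields $I\subset (f^{\bot})$.

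\textbf{Backward direction.} Assume $I$ satisfies (a)--(c), and write $V_{\PP^{n}}(I)=\{[\zeta_{1}],\ldots,[\zeta_{s}]\}$ with $s\le r$. By Lemma~\ref{lem:apolar:saturated}, condition (a) implies $\apolar{I^{d}}{f}=0$, i.e.\ $f^{*}\in (I^{d})^{\bot}$. The key step is the identification $(I^{d})^{\bot}=\langle \1_{\zeta_{1}},\ldots,\1_{\zeta_{s}}\rangle$ inside $(S^{d})^{*}$. The inclusion $\supseteq$ is clear since every $g\in I^{d}$ vanishes at each $\zeta_{i}$. For the reverse, since $I$ is saturated and radical, $I^{d}$ equals the kernel of the evaluation map $\mathrm{ev}\colon S^{d}\to\kk^{s}$, $g\mapsto (g(\zeta_{i}))_{i}$; therefore the image of the dual map, which is $\langle \1_{\zeta_{1}},\ldots,\1_{\zeta_{s}}\rangle$, has dimension equal to $\dim(S^{d}/I^{d})=H_{S/I}(d)=\dim(I^{d})^{\bot}$ by Remark~\ref{dimperp}. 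Equal dimensions with the inclusion force equality. (For $d\ge s$ this is exactly Lemma~\ref{lem:evdual}; the subtle case is small $d$, handled by the dimension count.)

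Writing $f^{*}=\sum_{i}\lambda_{i}\1_{\zeta_{i}}=\sum_{i}\lambda_{i}(\zeta_{i}^{d})^{*}$ and using that the apolar pairing is non-degenerate in characteristic $0$, so that $g\mapsto g^{*}$ is injective on $S^{d}$, we deduce $f=\sum_{i}\lambda_{i}\zeta_{i}^{d}$. Discarding indices with $\lambda_{i}=0$ and taking $d$-th roots in the algebraically closed field $\kk$, we set $\kb_{i}=\lambda_{i}^{1/d}\zeta_{i}$ and obtain $f=\sum\kb_{i}^{d}$ with at most $s\le r$ terms. The $\kb_{i}$ are nonzero and pairwise distinct in $S^{1}$ because the $[\zeta_{i}]$ are distinct in $\PP^{n}$, hence not scalar multiples of each other. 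The principal obstacle is the identification $(I^{d})^{\bot}=\langle \1_{\zeta_{i}}\rangle$ for arbitrary $d$; once this is in hand, the rest is a direct transfer through the isomorphism $S^{d}\simeq (S^{d})^{*}$ provided by the apolar product.
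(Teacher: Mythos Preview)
Your proof is correct and follows the same overall strategy as the paper: in both directions you use the ideal of the points and the identification of $(I^{d})^{\bot}$ with the span of the evaluation functionals. The paper simply invokes Lemma~\ref{lem:evdual} to get $(I^{d})^{\bot}=\langle \1_{\zeta_{1}},\ldots,\1_{\zeta_{s}}\rangle$, whereas you supply a direct dimension count via the evaluation map; this is a genuine refinement, since Lemma~\ref{lem:evdual} as stated only applies when $d\ge s$, a hypothesis the proposition does not make, and your argument covers the remaining case $d<s$ as well.
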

\begin{proof}
Suppose that $f$ has a decomposition of size $\leq r$: $f=
\sum_{1}^{s} \mathbf{w}_{i}^{{d}}$ where
$\mathbf{w}_{i}\in S^{1}-\{0\}$ and $s\le r$. 
Then consider the homogeneous ideal $I$ of polynomials vanishing
at the points $[\mathbf{w}_{i}]\in \PP^{n}$, $i=1,\ldots,s$. By construction, for
all $g \in I^{{d}}$, 
$$ 
\apolar{f}{g}
= \sum_{1}^{s} \apolar{\mathbf{w}_{i}^{{d}}}{g} 
= \sum_{1}^{s} g (\mathbf{w}_{i}) = 0
$$ 
so that $I$ is a saturated ideal, defining $s$ $(\leq r)$ simple points
and with $I\subset (f^{\bot})$.

Conversely, suppose that $I$ is an ideal of $S$ satisfying (a), (b),
(c). Let us denote by $[\mathbf{w}_{1}], \ldots, [\mathbf{w}_{s}]$ the
simple points of $\PP^{n}$ defined by $I$
and by $\mathbf{w}_{1}, \ldots, \mathbf{w}_{s}$ corresponding elements
in $S^{1}$. Then by Lemma \ref{lem:evdual}, $(I^{{d}})^{\bot}$ is
spanned by $\1_{\mathbf{w}_{1}}, \ldots, \1_{\mathbf{w}_{s}}$. 
As $I\subset (f^{\bot})$, we have $f^{*}\in (I^{{d}})^{\bot}$ so that 
there exists $\lambda_{1}, \ldots, \lambda_{s}\in \kk$ such that 
$$ 
f^{*} = \sum_{i=1}^{s} \lambda_{i} \1_{\mathbf{w}_{i}}.
$$
This implies that
$$ 
f = \sum_{i=1}^{s} \lambda_{i} \mathbf{w}_{i}^{{d}} 
= \sum_{i=1}^{s} (\lambda_{i}^{1\over d} \mathbf{w}_{i})^{{d}} 
$$
and $f$ has a decomposition of size $\le s \le r$.
\end{proof}


\section{Ranks of symmetric tensors}\label{Ranks}
In this section we introduce all the different notions of rank of a
homogeneous polynomial $f\in S^{d}$, that we will use all along the paper.

\subsection{Rank and border rank}
The following definition is nowadays a classical one, see e.g. \cite{pre05968745} and references therein.
\begin{definition}[Rank]\label{def:secante2}
Let $\sigma_{r}^{0,d} \subset \PP(S^{d})$ be the set of projective classes of homogeneous polynomials defined by 
$$
\sigma_{r}^{0,{d}} := \{ [f] \in \PP(S^{d}) | \ \exists\, \kb_1,\ldots,\kb_s
\in S^{1} \text{ with }  s \leq r \text{ s.t. } f = \kb_1^{d}+\cdots + \kb_s^{d} \}.
$$
For any $f\in S^{d}$, the minimal $r$ such that $[f]\in \sigma_{r}^{0,{d}}$ is called the {\em rank} of $f$ and denoted $r(f)$.
\end{definition}

\begin{example} \label{ex:rk:monomial}\label{eq:ex1} 
Let us describe a decomposition of the monomial $f := x_{0}^{\alpha_{0}}\cdots
x_{n}^{\alpha_{n}}$ with
$\alpha_{0} + \cdots + \alpha_{n} =d$ of minimal size, which yields to its rank.  
We consider the ideal $I_{\epsilon}:=(x_{1}^{\alpha_{1}+1}-\epsilon^{\alpha_{1}+1} x_{0}^{\alpha_{1}+1},
x_{2}^{\alpha_{2}+1}-\epsilon^{\alpha_{2}+1}\, x_{0}^{\alpha_{2}+1}, \ldots, 
x_{n}^{\alpha_{n}+1}-\epsilon^{\alpha_{n}+1}\, x_{0}^{\alpha_{n}+1})$
for some $\epsilon\in \kk\setminus \{0\}$.
It is defining $(\alpha_{1}+1)\cdots (\alpha_{n}+1)$ simple points
(which $i^{\mathrm{th}}$ coordinates are $\epsilon$ times the $(\alpha_{i}+1)$-roots of unity).
Let us consider the element $\Lambda$ of $S^{*}$ defined as follows:
$$ 
\Lambda := 
\frac{1}{\left( \alpha_1 + 1 \right) \cdots \left( \alpha_{n} + 1\right){d\choose \alpha}}  
\sum_{k_1 = 0}^{\alpha_1} \cdots
\sum_{k_{n} = 0}^{\alpha_{n}}  
\epsilon^{\alpha_{0}-d}{\zeta_{1}^{k_1} \cdots \zeta_{n}^{k_{n}} }  
\1_{(1, \epsilon \zeta_1^{k_1}, \ldots, \epsilon\zeta_{n}^{k_{n}})}
$$
where $\zeta_{i}$ is a primitive $(\alpha_{i}+1)$-th root of unity for $i=1,\ldots,n$. 
Then for any monomial $\xb^{\beta} =x_{0}^{\beta_{0}}\cdots
x_{n}^{\beta_{n}}$, we have 
\begin{eqnarray*}
\lefteqn{\Lambda (\xb^{\beta}) = 
\frac{1}{\left( \alpha_1 + 1 \right) \cdots \left( \alpha_{n} + 1\right){d\choose \alpha}}  
\sum_{k_1 = 0}^{\alpha_1} \cdots
\sum_{k_{n} = 0}^{\alpha_{n}} 
\epsilon^{\alpha_{0}-d + \beta_{1} + \cdots + \beta_{n}}\, 
\zeta_1^{(\beta_{1} +1) \, k_0} \cdots \zeta_{n}^{(\beta_{n}+1)\, k_{n}}}\\
&=&\left\{
\begin{array}{l}
\frac{1}{{d\choose \alpha}} \epsilon^{\rho (l_{1},
  \ldots, l_{n}) + \alpha_{0}  -d} \ \mathrm{if\ } \forall i=1,\ldots,n,
\exists l_{i}\in \NN_{+}, \beta_{i}+1= l_{i} (\alpha_{i}+1)\\
0\ \mathrm{otherwise}.
\end{array}
\right.
\end{eqnarray*}
where $\rho(l_{1}, \ldots, l_{n})= l_{1} (\alpha_{1}+1)+ \cdots +
l_{n} (\alpha_{n}+1) - n$. 
Its minimal value on $\NN_{+}^{n}$ is $\rho (1,\ldots, 1)= \alpha_{1} + \cdots +
\alpha_{n} =d -\alpha_{0}$. The previous computation shows that 
\begin{equation}\label {eq:dec}
\Lambda_{|S^{d}} = 
\frac{1}{{d\choose \alpha}}  
\sum_{\lb\in \NN_{+}^{n}\mid d-\alpha_{0}\le \rho (\lb) \le d} \epsilon^{\alpha_{0}+\rho (\lb)-d}\,  \db_{0}^{d-\rho (\lb)}\db_{1}^{l_{1} (\alpha_{1}+1)-1} \cdots \db_{n}^{l_{n} (\alpha_{n}+1)-1 }.
\end{equation}

Suppose that $\alpha_{0} = \min_{i=0,\ldots,d} \alpha_{i}$. Then the
ideal $I_{\epsilon}$ is included in $(f^{\bot})=(x_{0}^{\alpha_{0}+1},
\ldots, x_{n}^{\alpha_{n}+1})$.
By Proposition \ref{prop:reform}, we deduce that $f$ has a decomposition of size ${\prod_{0}^{n}
(\alpha_{i}+1) \over \min_{i} (\alpha_{i}+1)}$. 

As $\rho (\lb)\le d$ implies $\lb= (1,\ldots,1)$, we have
$$ 
\Lambda_{|S^{d}} = 
\frac{1}{{d\choose \alpha}} \db_{0}^{\alpha_{0}}\db_{1}^{\alpha_{1}} \cdots \db_{n}^{\alpha_{n}}= f^{*}
$$
which gives the decomposition of $f^{*}$.
The corresponding decomposition of $f$ in terms of
$d^{\mathrm{th}}$-powers of linear forms is 
\begin{eqnarray*}
\lefteqn{x_0^{\alpha_0} \cdots x_n^{\alpha_n} =
\frac{1}{\left( \alpha_1 + 1 \right) \cdots \left( \alpha_n + 1\right){d\choose \alpha}} \times  }\\
&&\sum_{k_1 = 0}^{\alpha_1} \cdots
\sum_{k_n = 0}^{\alpha_n} 
{\epsilon^{d-\alpha_{0}}\, \zeta_1^{k_1} \cdots \zeta_n^{k_n} }  
\left( x_0 + \epsilon \zeta_1^{k_1} x_1 + \cdots + \epsilon
  \zeta_n^{k_n} x_n \right)^d . 
\end{eqnarray*}
It can be proved that this decomposition has a minimal size (see
\cite{Carlini:2011fk}, \cite{MR2966824}), so that we have 
$$ 
r (x_0^{\alpha_0} \cdots x_n^{\alpha_n}) =  
{\prod_{0}^{n}(\alpha_{i}+1) \over \min_{i} (\alpha_{i}+1)}.
$$
This example also shows that the decomposition is not unique, since
$\epsilon$ is any non-zero constant.
\end{example}  
 
For more details on rank of monomials see also \cite{CGG}, \cite{MR2842085} and \cite{Buczynska:2012fk}; the example above was also shown with different approach in \cite[\S 2]{Buczynska:2012fk} and in \cite[Corollary 3.8]{Carlini:2011fk}.

\begin{definition}[Border rank]
The Zariski closure of $\sigma_{r}^{0,{d}}\subset \PP(S^{d})$, also
known as the $r^{\mathrm{th}}$ secant variety of the Veronese variety
of $S^{d}$, is denoted $\sigma_{r}^{d}$. 

The minimal $r$ such that  $[f]\in \sigma_{r}^{d}$ is called the {\em
  border rank} of $f$ and we denote it $r_{\sigma}(f)$ (cf. \cite{cglm-simax-2008,TenbSR04:laa,BurgCS97}).
\end{definition} 


\begin{example}\label{ex:border:rk:monomial}\label{ex:ex2} Consider again $f := x_{0}^{\alpha_{0}}\cdots
x_{n}^{\alpha_{n}}$ with $\alpha_{0} + \cdots + \alpha_{n} =d$.  
Suppose now that $\alpha_{0} = \max_{i}\alpha_{i}$. Then the
decomposition \eqref{eq:dec} is of the form
$$ 
f^{*}_{\epsilon} = f^{*} + \frac{1}{{d\choose \alpha}} \sum_{\lb\in
  \NN_{+}^{n}\mid d-\alpha_{0}< \rho (\lb) \le d}
\epsilon^{\alpha_{0}+\rho (\lb)-d}\, \db_0^{d-\rho (\lb)}\db_{1}^{l_{1}
  (\alpha_{1}+1)-1} \cdots \db_{n}^{l_{n} (\alpha_{n}+1)-1 },
$$
with possibly some terms in the sum which involves positive powers of $\epsilon$.
This shows that $\lim_{\epsilon\rightarrow 0} f_{\epsilon}^{*} =
f^{*}$. As $f^{*}_{\epsilon}\in I_{\epsilon}^{\bot}$ and
$I_{\epsilon}$ is defining simple points, the rank of $f_{\epsilon}$
is $\le {\prod (\alpha_{i}+1)\over \max_{i} (\alpha_{i}+1)}$.

We deduce that the border rank of $f$ is less than 
${\prod_{i=0}^{n}  (\alpha_{i}+1)\over \max_{i} (\alpha_{i}+1)}$. In \cite{MR2628829} it is shown that if $\max{\alpha_i}$ is equal to the sum of all the others $\alpha_i$'s then 
such a bound is actually sharp.
%
\\
Consider e.g. $f=x_{0} x_{1}^{d-1}$ (for $d>2$). This is the first well
known case where the rank and border rank are different: from Example
\ref{eq:ex1} we get that $r(f)=d$, while here we have just seen that
$r_{\sigma}(f)=2$ (see also \cite{MR2754189}, \cite{BGI11},
\cite{MR2628829}). 
\end{example}

\subsection{Smoothable rank} 
Let $\Hilbredr$ be the set of schemes of length $r$ which are the limit of smooth schemes of $r$ points, and let us consider the two following definitions according e.g. to \cite{MR2842085} and \cite{Bernardi:2011vn}.

\begin{definition}\label{def:smoothable}
For any integers $r$ and $d$, we define $\Smooth^{r}_d \subset \PP(S^{{d,0}})$, as the set
$$ 
\Smooth_{r}^{{d,0}}:= \{ [f] \in \mathbb{P}( S^{{d}}) | \ \exists \, s \leq
r, \exists\, I \in \Hilbredr,  \apolar{I^{{d}}}{f}=0 \}.
$$ 
\end{definition}
This leads to the following definition.
\begin{definition}[Smoothable rank]
The smallest $r$ such that $[f] \in \Smooth_{r}^{{d,0}}$ is called the
{\em smoothable rank} of $f$ and it is denoted $r_{\mathrm{smooth}^0}(f)$.
\end{definition}

\begin{remark}\label{smooth:border} In \cite[Lemma 5.17]{ia-book-1999} it is shown that $\Smooth_{r}^{{d,0}} \subset \sigma_r^d$.
This proves that 
$$r_{\sigma}(f)\leq r_{\mathrm{smooth}^0}(f).$$
\end{remark}
The following example is a personal communication from
W.~Buczy{\'n}ska and J.~Buczy{\'n}ski (\cite{inpreparation}).  It shows that strictly inequalities can occur.

\begin{example}[\cite{inpreparation}]\label{Jarek}
The following polynomial has border rank $\leq 5$ but smoothable rank $\geq 6$:
$$ 
f= x_{0}^{2} x_{2} + 6 x_{1}^{2} x_{3} -3\, (x_{0}+x_{1})^{2}
x_{4}.
$$
One can easily check that the following polynomial 
$$ 
f_{\epsilon}= (x_{0}+\epsilon x_{2})^{3} + 6(x_{1}+\epsilon
x_{3})^{3} -3(x_{0}+x_{1}+ \epsilon x_{4})^{3} + 3(x_{0} + 2\,
x_{1})^{3} - (x_{0} + 3 x_{1})^{3}
$$ 
has rank $5$ for $\epsilon >0$, and that $\lim_{\epsilon\rightarrow 0}{1\over 3\epsilon} f_{\epsilon} =f$.

Therefore $r_{\sigma} (f)\le 5$.

An explicit computation of $(f^{\perp})$ yields to the following 
Hilbert function for $H_{R/ (f^{\bot})} = [1, 5, 5, 1, 0, \ldots]$. 
Let us prove, by contradiction, that there is no saturated ideal $I\subset (f^{\perp})$ of
degree $\le 5$.
Suppose on the contrary that $I$ is such an ideal. Then $H_{R/ I} (n) \ge H_{R/ (f^{\perp})}
(n)$ for all $n \in \NN$. As $H_{R/ I} (n)$ is
an increasing function of $n\in \NN$ with $H_{R/ (f^{\perp})} (n) \le H_{R/ I} (n) \le 5$, we deduce that 
$H_{R/ I} = [1, 5, 5, 5, \ldots]$. 
This shows that $I^{1}=\{0\}$
and that $I^{2} = (f^{\perp})^{2}$. As $I$ is saturated, $I^{2}:
(x_{0}, \ldots, x_{4})=I^{1}=\{0\}$ since  $H_{R/(f^{\perp})} (1) =
5$. But an explicit computation of
$((f^{\bot})^{2}: (x_{0}, \ldots, x_{4}))$ gives
$\langle x_{2},x_{3},x_{4}\rangle$.  We obtain a contradiction, so that
there is no saturated ideal of
degree $\le 5$ such that $I\subset (f^{\perp})$.
Consequently, $r_{\mathrm{smooth}^0} (f) \ge 6$ 
so that $r_{\sigma} (f) < r_{\mathrm{smooth}^0} (f)$.
\end{example}


\begin{remark}\label{smooth=border} If we indicate with $r_{\mathrm{smooth}}(f)$ the smallest $r$ such that $[f] \in \overline{\Smooth_{r}^{{d,0}}}:=\Smooth_{r}^{{d}}$, then we can observe that $\Smooth_{r}^{{d}}= \sigma_r^d$. Obviously $\sigma_r^d\subset \Smooth_{r}^{{d}}$. The other inclusion follows from Remark \ref{smooth:border}. 
This shows that 
$$r_{\sigma}(f)=r_{\mathrm{smooth}}(f).$$ In the introduction we called $r_{\mathrm{smooth}}(f)$ the {\em border smoothable rank} of $f$.
\end{remark}

\subsection{Catalecticant rank}

The apolar ideal $(f^{\bot})$ can also be defined  via the
kernel of the following operators. Let us recall the following standard definition.

\begin{definition}[Catalecticant]\label{def:catalecticant}
Given a homogeneous polynomial $f \in S^{d}$ and a positive integer ${k}$ such that $k \leq d$,
the \textit{Catalecticant} of order ${k}$ of $f$, denoted by
$H_{f^{*}}^{k,d-k}$, is the application:
\begin{eqnarray*}
H_{f^{*}}^{k,d-k} : S^{k} &\rightarrow&  (S^{d-k})^{*} \\
 p &\mapsto& p \cdot f^*.
\end{eqnarray*} 
Its matrix in the monomial
basis $\{\xb^{\alpha}\}_{|\alpha|=k}$ of $S^{k}$ and in the 
basis $\{ {d-k\choose \beta}^{-1} \, \db^{\beta} \}_{|\beta|=d-k}$ of
$(S^{d-k})^*$
is denoted $\Cat^{k,d-k}_{f^{*}}$.
\end{definition}

By construction, $\ker H^{k,d-k}_{f^{*}}$ is the component $(f^{\bot})^{k}$ of degree $k$ of 
the apolar ideal $(f^{\bot})$ of $f$. 

Given two families of monomials $B \subset S^{k}$ and 
$B' \subset S^{d-k}$, we denote by $H^{B',B}_{f^{*}}$ the ``restriction''  of
$H^{k,d-k}_{f^{*}}$ from the vector space spanned by $B$
to the dual of the vector space spanned by $B'$.


\begin{remark}\label{rem:symetrie.catalecticant}
By symmetry of the apolar product, we have
$H^{k,d-k}_{f^{*}} = {}^t H^{d-k,k}_{f^{*}}$
via the identification $S^{d-i}\simeq (S^{d-i})^*$. In terms of matrices, we have
$\Cat^{k,d-k}_{f^{*}} = {}^t \Cat^{d-k,k}_{f^{*}}. $
orsalutalo da parte mia
$\Cat^{B',B}_{f^{*}} = {}^t \Cat^{B,B'}_{f^{*}}$
for all families of monomials $B \subset S^{{k}}$ and $B'
\subset S^{{d-k}}$.
\end{remark}

\begin{definition}[Catalecticant rank]\label{cat}
Let $f\in S^d$. 
The maximal rank of the operators $H^{{k},{d-k}}_{f^{*}}$,
for $0\le k\le d$, is called the
{\em catalecticant rank} of $f$ and it is denoted $r_{H}(f)$.
\end{definition}
This rank was already introduced in \cite{ia-book-1999}[Definition 5.66, p.198] where it was called ``the differential length of $f$'' and denoted by
$l\mathrm{diff} (f)$.

\begin{definition}\label{def:variete.catalecticant}
Given an integer ${i \leq d}\in \NN^{t}$ and $r\in \NN$, we define the variety
$\Gamma_{r}^{i,d-i}\subset \PP(S^d)$ as:
$$
\Gamma_{r}^{i,d-i} := \{ [f] \in \mathbb{P}( S^d) | \ \rank(H_{f^{*}}^{i,d-i}) = \rank(H_{f^{*}}^{d-i,i}) \leq r  \}.
$$
\end{definition}
\begin{remark}
The set $\Gamma_{r}^{i,d-i} \subset \PP(S^d)$ is the algebraic variety
defined by the minors $(r+1)\times (r+1)$ of the catalecticant
matrices $\Cat^{i,d-i}_{f^{*}}$ (or $\Cat^{d-i,i}_{f^{*}}$).   These minors give not necessary reduced equations but they represents in  $\mathbb{P}(S^d)$ the variety that is the union of linear spaces spanned by the images of the divisors (hypersurfaces in $\mathbb{P}(S^1)$) of
degree $r$ on the Veronese $\nu_d(\mathbb{P}(S^1))$ (see e.g. \cite{BGI11} and \cite{MR1620349}).
\\
If $i=1$, such a variety is known as the ``subspace variety in
$\mathbb{P}(S^d)$'' $Sub_r(S^d(V)):=\mathbb{P}\{f \in S^d(V) | \exists
W \subset V, \dim (W)=r, f\in S^d(W)\}$. For a generic $i$, it can be
geometrically obtained by intersecting $\mathbb{P}(S^d)$ with the
$r$-th secant variety of the Segre variety of $\mathbb{P}(S^a)\times
\mathbb{P}(S^{d-a})$ (for a better description of subspace varieties
see \cite[\S 17.4]{pre05968745}). 
\end{remark}

\begin{example} For a monomial $f=x_{0}^{\alpha_{0}}\cdots
 x_{n}^{\alpha_{n}}$ with $\alpha_{0} + \cdots +\alpha_{n}=d$, the apolar ideal of $f$ is 
$J := (f^{\bot}) = (x_{0}^{\alpha_{0}+1}, \ldots,
x_{n}^{\alpha_{n+1}})$. 
By Remark \ref{dimperp}, the rank of $H^{i,d-i}_{f^{*}}$ is the dimension
of $S^{i}/J^{i}$ that is the coefficient of $t^{i}$ in 
$$ 
h(t)=\prod_{i=0}^{n} (1 + t + \cdots + t^{\alpha_{i}}).
$$
The maximum value of these coefficients which is the catalecticant
rank is reached for the coefficients of the closest degree 
to ${1\over 2} (\alpha_{1} + \cdots + \alpha_{n})$ (it is proved in
\cite{Stanley89} that the polynomial $h(t)$ is symmetric unimodal, which means that its coefficients
are increasing up to the median degree(s) and then decreasing symmetrically).
The exact value of the maximum is not known but asymptotic
equivalents are known in some cases, see
e.g. \cite[p. 234--240]{Comtet74}.

Consider for instance the monomial $f = x_{0} x_{1}^{2} x_{2}^{2}$.
The previous computation yields to the following Hilbert series for the
apolar ideal:
$$ 
H_{S/ (f^{\bot})} (t) = (1+t) (1+t+t^{2})^{2} = 1+3\,t+5\,{t}^{2}+5\,{t}^{3}+3\,{t}^{4}+{t}^{5}.
$$
This shows that
 $\rank H_{f^{*}}^{1,4}= \rank H_{f^{*}}^{4,1} = 3$, 
 $\rank H_{f^{*}}^{2,3}= \rank H_{f^{*}}^{3,2} = 5$ and
thus that $r_{H} (f)=5$.

According to Example \ref{ex:ex2}, the border rank of $f = x_{0} x_{1}^{2} x_{2}^{2}$ is $(1+1)
(2+1) =6$, which shows that the border rank of $f$ is strictly bigger
that its Catalecticant rank. 

In \cite[Theorems 1.2.3 and 4.2.7]{Landsberg:2011uq}, it is shown 
that $\Gamma^{2,3}_5 (\PP^{2})$ has codimension $5$ in $\mathbb{P}(S^5)$ while 
the secant variety $\sigma_5^5 (\PP^{2})$ has codimension 6.
Therefore a generic element of  $\Gamma^{2,3}_5 (\PP^{2})$ has border
rank strictly bigger than 5. 
\end{example}

\subsection{Generalized rank and border generalized rank}

\begin{definition}\label{gendec}
A generalized affine decomposition of size $r$ of $f\in S^{{d}}$ is a
decomposition of the form
$$ 
\underline{f}^{*} = \sum_{i=1}^{m} \1_{\zeta_{i}}\circ \phi_{i}(\partial) \text{ on } R^{\le d}
$$ 
where $\zeta_{i}\in \KK^{n}$ and $\phi_{i}(\partial)$ are differential
polynomials, such that the dimension of the inverse
systems spanned by $\sum_{i=1}^{m} \1_{\zeta_{i}}\circ \phi_{i}(\partial)$ is $r$.
\end{definition}

Notice that the inverse system generated by $\sum_{i=1}^{m}
\1_{\zeta_{i}}\circ \phi_{i}(\partial)$ is the direct sum of the
inverse systems generated by $\1_{\zeta_{i}}\circ \phi_{i}(\partial)$ 
for $i=1, \ldots, m$.
The inverse system generated by $\1_{\zeta_{i}}\circ \phi_{i}(\partial)$ 
is the vector space spanned by the elements
$\1_{\zeta_{i}}\circ \partial_{\partial_{1}}^{\alpha_{1}} \cdots \partial_{\partial_{n}}^{\alpha_{n}} \,\phi_{i}(\partial)$
for all $\alpha= (\alpha_{1}, \ldots, \alpha_{n}) \in \NN^{n}$.

This decomposition generalizes the (Waring) decomposition of Definition
 \ref{def:decomp}, since when $\phi_{i} (\partial)=
\lambda_{i}\in \kk$ are constant
polynomials, we have the decomposition 
$$
\uf^{*} = \sum_{i=1}^{m}
\lambda_{i}\, \1_{\zeta_{i}}\ \textrm{iff}\ f= \sum_{i=1}^{m} \lambda_{i} \,(1+
\zeta_{i,1} x_{1} + \cdots \zeta_{i,n}\, x_{n})^{d}.
$$

\begin{definition}[Generalized rank]\label{def:H}
Given two integers $r$ and $d$, we define $\Gh_{r}^{{d,0}} \subset \PP(S^{d})$ by:
$$
\Gh_{r}^{{d,0}} := \bigcup_{[g] \in PGL(n+1)} \{[ f] \in
\mathbb{P}( S^{d}) \mid g\cdot f^{*} \text{ has a generalized affine decomposition of size} \le r \}.
$$
The smallest $r$ such that $[f] \in \Gh_{r}^{{d,0}}$ is called the {\em generalized rank} of $f$ and it is denoted $r_{\Gh^0}(f)$.
\end{definition}

\begin{example} The polynomial $f=x^3y+y^3z$ defines an inverse
    system of dimension 4 obtained as $\langle \1_{(1,0,0)} ,
    \1_{(1,0,0)} \partial_y, \1_{(0,1,0)},
    \1_{(0,1,0)}\partial_z\rangle$, therefore $r_{\Gh^0}(f)=4$. Moreover, we are 
    in a case of a polynomial of border rank 4 and  rank 7 (as described in \cite[Theorem 44]{BGI11}). In this case $r_{\Gh^0}(f)=4= r_{\sigma}(f)<r(f)=7$.
\end{example}

\begin{example} \label{expl:G0}
For a monomial $f=x_{0}^{\alpha_{0}}\cdots x_{n}^{\alpha_{n}}$ with
$\alpha_{0}+\cdots +\alpha_{n}=d$, we have
$$ 
\uf^{*} = {1\over d!} \1_{(1,0,\ldots,0)}\cdot \partial_{1}^{\alpha_{1}} \cdots \partial_{n}^{\alpha_{n}}.
$$
The inverse system spanned by $\1_{(1,0,\ldots,0)}\cdot \partial_{1}^{\alpha_{1}} \cdots \partial_{n}^{\alpha_{n}}$ is of dimension
$(\alpha_{1}+1)\times \cdots \times (\alpha_{n}+1)$. 
Assuming that $\alpha_{0} =  \max_{i} \alpha_{i}$, the previous decomposition is
a generalized decomposition of minimal size (according
to Corollary \ref{hierarchy} and Example \ref{ex:schematic:rk:monomial}). Therefore we have
$$ 
r_{\Gh^0} (x_{0}^{\alpha_{0}}\cdots x_{n}^{\alpha_{n}}) =
{\prod_{i=0}^{n}  (\alpha_{i}+1)\over \max_{i} (\alpha_{i}+1)}.
$$
\end{example}
 
Notice that $[f] \in \Gh_{r}^{{d,0}}$ iff there exists a change of
coordinates such that in the new set of coordinates $\underline{f}$ has
a generalized affine decomposition of size $\le r$.

This notion of generalized affine decomposition and of generalized
rank is related to the generalized additive decomposition introduced
in \cite [Definition 1.30, p. 22]{ia-book-1999} for binary forms,
called ``the length of $f$'' and denoted $l(f)$. However, the extension to
forms in more variables proposed in \cite[Definition 5.66,
p. 198]{ia-book-1999} does not correspond to the one we propose, in fact it corresponds to
the border rank. For binary forms, the border rank and the generalized
rank coincide as we will see in the sequel.

\begin{definition}[Border generalized rank]
Given two integers $r$ and $d$, we define $\Gh_{r}^{{d}} \subset \PP(S^{d})$ to be the Zariski closure of $\Gh_{r}^{{d,0}}$ defined above.
The smallest $r$ such that $[f] \in \Gh_{r}^{{d}}$ is called the {\em border generalized rank} of $f$ and it is denoted $r_{\Gh}(f)$.

\end{definition}

\subsection{Flat extension rank and border flat extension rank}
We describe here a new notion of rank based the property of extension
of bounded rank of the catalecticant matrices.
\begin{definition}
For any integers $r$ and $d$, we define $\Eh^{{d,0}}_{r} \subset \PP(S^{{d}})$, as the set
\begin{eqnarray*}
\Eh_{r}^{{d,0}}&:=&  \{ [f] \in \mathbb{P}( S^{{d}}) | \ \exists \, \ub \in S^{1}\setminus\{0\}, \exists\,
[\tilde{f}] \in
\Gamma^{m,m'}_{r}\
\mathrm{\ with\ }\\ 
&&\ \ m = \max\{r, \lceil {d\over 2}\rceil\}, 
\ m' = \max\{r-1, \lfloor {d\over 2}\rfloor\}
\ \mathrm{s.t.}\ \ub^{m+m'-d}\cdot \tilde{f}^{*} = f^{*}\}.
\end{eqnarray*}
\end{definition}

By definition, $m+m'\ge  \lceil {d\over 2}\rceil + \lfloor {d\over
  2}\rfloor = d$. Moreover, if $d \geq 2 r-1$ then $\Eh_{r}^{{d}}=\Gamma^{m,m'}_{r}$
since $m=\lceil {d\over 2}\rceil$, $m'=\lfloor {d\over 2}\rfloor$ and $m+m'-d=0$.

\begin{definition}[Flat extension rank]
The smallest $r$ such that $[f] \in \Eh_{r}^{{d,0}}$ is called the {\em
flat extension rank} of $f$ and it is denoted $r_{\Eh^0}(f)$.

A $[\tilde{f}] \in \Gamma_{r}^{m,m'}$ such that 
$\exists \, \ub \in S^{1}\setminus\{0\}$ with $\ub^{m+m'-d}\cdot
\tilde{f}^{*} = f^{*}$ and $\rank H^{m,m'}_{f^{*}}=r$ is called a {\em flat extension} of $f\in
S^{d}$ of rank $r$. 
\end{definition}

\begin{example}
For a monomial $f=x_{0}^{\alpha_{0}}\cdots x_{n}^{\alpha_{n}}$ with
 $\alpha_{0}+\cdots +\alpha_{n}=d$, the element 
$$
\tilde{f}^{*} = {1\over d!} \1_{(1,0,\ldots,0)}\cdot \partial_{1}^{\alpha_{1}} \cdots \partial_{n}^{\alpha_{n}} 
\in S^{*}, 
$$
defines, by restriction, a Hankel operator $H_{\tilde{f}^{*}}^{r,r-1}$
from $S^{r}$ to $(S^{r-1})^{*}$ where $r=(\alpha_{1}+1)\cdots (\alpha_{n}+1)$.
We check that the image of $H_{\tilde{f}^{*}}^{r,r-1}$ is the vector
space of $(S^{r-1})^{*}$ spanned by
$\1_{(1,0,\ldots,0)}\cdot \partial_{1}^{\beta_{1}}
\cdots \partial_{n}^{\beta_{n}} $ for $0 \le \beta_{i}\le \alpha_{i}$.  Thus $H_{\tilde{f}^{*}}^{r,r-1}$ is of rank
$r$. 
According to Example \ref{expl:G0} and Theorem \ref{thm:cactus.catalecticant}, $\tilde{f}$ is a flat
extension of $f$ of minimal rank when $\alpha_{0} = \max_{i} \alpha_{i}$.
We deduce that
$$ 
r_{\Eh^{0}} (x_{0}^{\alpha_{0}}\cdots x_{n}^{\alpha_{n}})=
{\prod_{i=0}^{n} (\alpha_{i}+1)\over \max_{i} (\alpha_{i}+1)}.
$$
\end{example}
 
Notice that $[f] \in \Eh_{r}^{{d,0}}$ iff there exists a change of
coordinates such that after this change of coordinates 
we have $\ub=x_{0}$ so that $\underline{\tilde{f}}^{*}\in (R^{\le m+m'})^{*}$ is
such that 
\begin{itemize}
 \item $\underline{\tilde{f}}^{*}_{|R^{\le d}}=\underline{f}^{*}$ and
 \item $\rank H^{m,m'}_{\underline{\tilde{f}}^{*}} \le r$. 
\end{itemize}
 
In other words, $[f] \in \Eh_{r}^{{d,0}}$ iff 
after a change of coordinates, $\underline{f}^{*}\in (R^{\le d})^{*}$ 
can be extended to a linear form $\underline{\tilde{f}}^{*}\in (R^{\le m+m'})^{*}$ with 
$\rank H^{m,m'}_{\underline{\tilde{f}}^{*}}\le r$.
We will see hereafter that we can choose a generic change of coordinates.

A simple way to characterize a flat extension of a given rank is given
by the following result.
\begin{theorem}[\cite{ML08}, \cite{BBCM:2011:issac}, \cite{bbcm}]\label{thm:decomp.generalise.extension}
Let $M,M'$ be sets of $R$, $B\subset M$ and $B'\subset M'$ be two
monomial sets of size $r$ connected to $1$ such that $M\cdot M'$
contains $B^{+} \cdot B'^{+}$. If $\Lambda \in
\langle M\cdot M' \rangle^{*}$ is such that 
$\rank \Hkl^{B,B'}_{\Lambda} =\rank \Hkl^{M,M'}_{\Lambda} =r$, then
there exists an extension $\tilde \Lambda\in R^{*}$ of $\Lambda$ such
that $\rank \Hkl_{\tilde \Lambda} =r$. Moreover, we have $\ker
\Hkl_{\tilde \Lambda} = (\Hkl^{M,M'}_{\Lambda})$.
\end{theorem}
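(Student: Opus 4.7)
My plan is to establish the extension by constructing an $R$-module structure on $\langle B \rangle$ (equivalently $\langle B' \rangle$) from the given Hankel data, and then pulling $\Lambda$ back through the quotient map. First I would observe that since $H^{B,B'}_{\Lambda}$ is an $r \times r$ matrix of rank $r$, it is invertible. Combined with the equality $\rank H^{B,B'}_{\Lambda} = \rank H^{M,M'}_{\Lambda} = r$, this shows that the inclusions $\langle B\rangle \hookrightarrow \langle M\rangle$ and $\langle B'\rangle \hookrightarrow \langle M'\rangle$ induce isomorphisms onto the respective quotients $\langle M\rangle / \ker H^{M,M'}_{\Lambda}$ and $\langle M'\rangle / \ker H^{M',M}_{\Lambda}$. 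Thus $B$ (resp.\ $B'$) gives a complete set of representatives modulo the kernel.

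Next I would use the hypothesis $M \cdot M' \supset B^{+} \cdot B'^{+}$ to define multiplication operators. For each variable $x_i$ and each $b \in B$, the element $x_i\, b \in B^{+} \subset M$, and since $B$ is a basis modulo the kernel, there exist scalars $\lambda^{i,b}_{b''}$ such that $x_i\, b \equiv \sum_{b'' \in B} \lambda^{i,b}_{b''}\, b'' \pmod{\ker H^{M,M'}_{\Lambda}}$. The relations $\apolar{\Lambda}{b'(x_i b - \sum \lambda^{i,b}_{b''} b'')} = 0$ for all $b'\in B'$ are determined by entries of $\Lambda$ on $B^{+}\cdot B' \subset B^{+}\cdot B'^{+} \subset M\cdot M'$, hence well-defined. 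These operators $M_{x_i}\colon \langle B\rangle \to \langle B\rangle$ are the natural candidates for the actions of the $x_i$.

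The main obstacle, as in all flat-extension theorems of this type, is proving that the $M_{x_i}$ pairwise commute. Once commutativity is established, the polynomial algebra $R$ acts on $\langle B\rangle$, producing an ideal $I = \ker(R \to \mathrm{End}(\langle B\rangle) \cdot 1)$ with $\dim_{\kk} R/I = r$, and I can define $\tilde\Lambda\in R^{*}$ by $\tilde\Lambda(p) := \apolar{\Lambda}{\pi(p)}$ where $\pi\colon R\twoheadrightarrow \langle B\rangle$ is the induced projection. To prove commutativity, I would compute, for $b\in B$ and $b'\in B'$, the pairings $\apolar{\Lambda}{b'\, x_i x_j\, b}$ in two different ways: on the one hand using the relation for $x_i\, b$ followed by the relation for $x_j$ applied to each representative; on the other hand swapping the roles of $i$ and $j$. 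The key point is that because $B'^{+}\cdot B^{+}$ lies inside $M\cdot M'$ and $H^{B,B'}_{\Lambda}$ is of full rank $r$, the identity $\apolar{\Lambda}{b'(M_{x_i} M_{x_j} - M_{x_j} M_{x_i}) b} = 0$ for all $b' \in B'$ forces the element $(M_{x_i}M_{x_j}-M_{x_j}M_{x_i})b \in \langle B\rangle$ to vanish.

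Finally, having $\tilde\Lambda$ defined by the quotient construction, its Hankel operator $H_{\tilde\Lambda}\colon R\to R^{*}$ factors through the $r$-dimensional quotient $R/I \cong \langle B\rangle$, so $\rank H_{\tilde\Lambda}\le r$. The reverse inequality is immediate since $H^{B,B'}_{\tilde\Lambda} = H^{B,B'}_{\Lambda}$ is of rank $r$. The identification $\ker H_{\tilde\Lambda} = (H^{M,M'}_{\Lambda})$ then follows: any $p \in \ker H_{\tilde\Lambda}$ maps to $0$ in the quotient, hence is a combination of the generators coming from the relations $x_i b - \sum \lambda^{i,b}_{b''} b''$, which lie in $\ker H^{M,M'}_{\Lambda}$; conversely, any element of $\ker H^{M,M'}_{\Lambda}$ is sent to $0$ in $\langle B\rangle$ by construction and thus belongs to $\ker H_{\tilde\Lambda}$. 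The uniqueness of $\tilde\Lambda$ is then automatic from the requirement that it restricts to $\Lambda$ on $\langle M\cdot M'\rangle$ and has Hankel rank $r$.
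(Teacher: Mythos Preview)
The paper does not supply its own proof of this theorem: it is quoted with citations to \cite{ML08}, \cite{BBCM:2011:issac}, \cite{bbcm} and used as a black box. Your sketch follows precisely the strategy of those references (the Laurent--Mourrain sparse flat extension argument): invert $H^{B,B'}_{\Lambda}$, use the inclusion $B^{+}\cdot B'^{+}\subset M\cdot M'$ to read off multiplication tables $M_{x_i}$ on $\langle B\rangle$, prove they commute by pairing against $B'$ and invoking invertibility, and then define $\tilde\Lambda$ through the resulting $r$-dimensional quotient. So there is nothing to compare against in the present paper, and your outline is the standard one.

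Two small remarks on the write-up. First, the connectedness-to-$1$ hypothesis is what guarantees that the map $R\to\langle B\rangle$ sending $p\mapsto p(M_{x_1},\ldots,M_{x_n})\cdot 1$ actually hits every $b\in B$ (so that $\tilde\Lambda$ genuinely extends $\Lambda$ on all of $\langle M\cdot M'\rangle$, not just on $\langle B\cdot B'\rangle$); you use this implicitly but it deserves a sentence. Second, in your final paragraph the identification $\ker H_{\tilde\Lambda}=(\ker H^{M,M'}_{\Lambda})$ is an equality of \emph{ideals}: the forward inclusion needs that every element of $I$ can be rewritten, using the commuting operators, as an $R$-combination of the border relations $x_i b-\sum\lambda^{i,b}_{b''}b''\in\ker H^{M,M'}_{\Lambda}$, which again relies on $B$ being connected to $1$. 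Your claim about uniqueness is extraneous (the theorem does not assert it), though it is in fact true.
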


\begin{definition}[border flat extension rank]
For any integers $r$ and $d$, we define $\Eh^{{d}}_{r} \subset \PP(S^{{d}})$, as the Zariski closure of set  $\Eh^{{d,0}}_{r}$ defined above and 
the smallest $r$ such that $[f] \in \Eh_{r}^{{d}}$ is called the {\em
border flat extension rank} of $f$ and it is denoted $r_{\Eh}(f)$.
\end{definition}

\subsection{Scheme length and border scheme length}
We recall that $\Hilbs$ is the set of 0-dimensional schemes $Z$ of
$s$ points (counted with multiplicity). It can be identified with the
set of homogeneous saturated ideals $I\subset S$ such that the
algebra $S/I$ has a constant Hilbert polynomial equal to $s$.

\begin{definition}\label{def:cactus}
For any integers $r$ and $d$, we define $\Kak^{d,0}_r \subset \PP(S^{{d}})$, as the set
$$ 
\Kak_{r}^{{d,0}}:= \{ [f] \in \mathbb{P}( S^{{d}}) | \ \exists \, s \leq r, \exists\, I \in \Hilbs, 
\apolar{I^{{d}}}{f}=0 \}.
$$ 
\end{definition}
In \cite{Buczynska:2010kx}, the {\em  $r$-cactus variety} $\Kak_{r}^{{d}}$ is defined as the closure
 of $\Kak_{r}^{{d,0}}$. 

\begin{definition}[Scheme length and border scheme length]
The smallest $r$ such that $[f] \in \Kak_{r}^{{d,0}}$ is called the {\em scheme length}  (or {\em cactus rank} in \cite{MR2842085})  of $f$ and it is denoted $r_{\mathrm{sch^0}}(f)$. The smallest $r$ such that $[f] \in \Kak_{r}^{{d}}$ is called the {\em border scheme length} of $f$ and it is denoted $r_{\mathrm{sch}}(f)$.

\end{definition}
We have used the same definition of ``scheme length" of $f$ used in 
\cite{ia-book-1999}[Definition 5.1, p. 135, Definition 5.66, p. 198], where it is denoted
$l\mathrm{sch} (f)$.

\begin{example}\label{ex:schematic:rk:monomial}
  For a monomial $f=x_{0}^{\alpha_{0}}\cdots x_{n}^{\alpha_{n}}$ with
  $\alpha_{0}+\cdots +\alpha_{n}=d$, the ideal $I=
  (x_{1}^{\alpha_{1}+1}, \ldots, x_{n}^{\alpha_{n}+1})$ is an ideal of
  length $(\alpha_{1}+1) \cdots  (\alpha_{n}+1)$, which is
  apolar to $f$. Assuming that $\alpha_{0} = \max_{i} \alpha_{i}$, this length ${\prod_{i=0}^{n}
    (\alpha_{i}+1)\over \max_{i} \alpha_{i}+1}$ is minimal as proved
in \cite[Cor. 2]{MR2842085}, using Bezout theorem.

Thus, the scheme length of $f$ is:
$$ 
r_{\mathrm{sch^{0}}} (x_{0}^{\alpha_{0}}\cdots x_{n}^{\alpha_{n}}) =
{\prod_{i=0}^{n}  (\alpha_{i}+1)\over \max_{i} (\alpha_{i}+1)}.
$$

\end{example}
This is an example where the border rank and scheme length coincide
but they differ from the rank (see Example \ref{ex:border:rk:monomial}). In the next example, we have a case where 
the scheme length is strictly smaller that the border rank. 
\begin{example} In the case of cubic polynomials, the scheme length
  of a generic form is smaller than its border rank for forms in 9
  variables. The border rank of a generic cubic form in 9 variables is
  in fact 19 (this is Alexander and Hirschowitz Theorem
  \cite{MR1455316}), while the scheme length is smaller or equal than
  18 (see \cite{Bernardi:2011vn}).
\end{example}

\section{The generalized decomposition}\label{gensection}
The objective of this section is to relate the scheme length,
generalized rank and flat extension rank.
 
\begin{lemma}\label{lem:GinK} Given two integers $r$ and $d$, we have 
$\Gh_{r}^{{d,0}}  \subset \Kak_{r}^{{d,0}}$.
\end{lemma}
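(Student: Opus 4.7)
First I would reduce to the case where $\underline{f}^{*}$ itself admits a generalized affine decomposition. By definition of $\Gh_{r}^{{d,0}}$, there is some $g\in PGL (n+1)$ with $g\cdot f^{*}$ admitting such a decomposition of size $\le r$. Both $\Gh_{r}^{{d,0}}$ and $\Kak_{r}^{{d,0}}$ are $PGL$-invariant: if $I\subset (f^{\bot})$ is a saturated zero-dimensional ideal of length $s\le r$, then its $PGL$-transport is a saturated zero-dimensional ideal of the same length contained in the apolar ideal of the transported polynomial. We may therefore replace $f$ by $g\cdot f$ and assume $\underline{f}^{*}=\tilde{\Lambda}|_{R^{\le d}}$, where
$$\tilde{\Lambda} := \sum_{i=1}^{m}\1_{\zeta_{i}}\circ \phi_{i} (\partial)\in R^{*},$$
and the inverse system $D\subset R^{*}$ spanned by $\tilde{\Lambda}$ satisfies $\dim_{\kk} D\le r$.

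Next I would apply Proposition \ref{prop:mult:dual} to $\tilde{\Lambda}$: the ideal $I' := D^{\bot}\subset R$ is the intersection of the primary ideals at the points $\zeta_{1},\ldots,\zeta_{m}$, it is zero-dimensional, and $\dim_{\kk} R/I' = \dim_{\kk} D\le r$. Let $I\subset S$ be its homogenization $I=\{p^{h}\mid p\in I'\}$. By the facts recalled in Section~\ref{Preliminaries}, $I$ is saturated, $x_{0}$ is a non-zero divisor in $S/I$, $\underline{I}=I'$, and $H_{S/I} (d)$ stabilizes at the value $s := \dim_{\kk} R/I'\le r$ for $d\gg 0$. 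In particular, $I\in\Hilbs$ with $s\le r$.

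It then remains only to verify $\apolar{I^{d}}{f}=0$. For any homogeneous $p\in I^{d}$, the dehomogenization satisfies $\underline{p}\in \underline{I}^{\le d}= (I')^{\le d}\subset D^{\bot}$, so $\tilde{\Lambda} (\underline{p})=0$ because $\tilde{\Lambda}\in D$. Consequently
$$\apolar{p}{f}=\apolar{\underline{p}}{\underline{f}}=\underline{f}^{*} (\underline{p})=\tilde{\Lambda} (\underline{p})=0,$$
and Lemma \ref{lem:apolar:saturated} gives $I\subset (f^{\bot})$, whence $[f]\in \Kak_{r}^{{d,0}}$.

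The main obstacle is bookkeeping rather than substance: one must identify $\underline{I}$ with $I'$ in order to transfer apolarity from the affine picture in $R$ to the projective picture in $S$, and one must justify the $PGL$-invariance used in the reduction step. Both are immediate from the formalism in the preliminaries, so the lemma essentially amounts to applying Proposition~\ref{prop:mult:dual} to the chosen extension $\tilde{\Lambda}$ of $\underline{f}^{*}$.
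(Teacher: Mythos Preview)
Your proof is correct and follows essentially the same approach as the paper: reduce by a change of coordinates to $g=\mathrm{Id}$, apply Proposition~\ref{prop:mult:dual} to the extension $\tilde{\Lambda}\in R^{*}$ to obtain a zero-dimensional ideal of length $s\le r$, homogenize to get $I\in\Hilbs$, and check $f^{*}\in (I^{d})^{\bot}$. Your ideal $I'=D^{\bot}$ coincides with the paper's $\ker H_{\Lambda}$, and you are simply more explicit about the $PGL$-invariance and the final apolarity verification.
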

\begin{proof}
To prove this inclusion, we show that if $g \cdot \underline{f}^{*}$
has a generalized affine decomposition of size $r$ of the form: 
$$
g \cdot \underline{f}^{*} = \sum_{i=1,...,m} \1_{\zeta_i} \circ \phi_i(\partial),
$$ 
on $R^d$, then $f^* \in (I^{{d}})^{\bot}$ for some ideal $I \in \Hilbs$ with $s \le r$.

By a change of coordinates, we can assume that $g = \mathrm{Id}_{n+1}$.  
Then the linear form:
$$ 
\underline{\Lambda} = \sum_{i=1,...,m} \1_{\zeta_i} \circ \phi_i(\partial)  \in R^*
$$ 
coincides with $\underline{f}^{*}$ on $R^{\leq d}$. 
By Proposition \ref{prop:mult:dual}, as the dimension of
the inverse system generated by $\Lambda$ is $s \leq r$,
${\underline{I}}= \ker H_{\Lambda} \subset R$ is a zero-dimensional ideal of multiplicity $s\leq r$.  
We denote by $I \subset S$ the homogenization of ${\underline{I}}$ with respect to $x_0$. Then, 
$$ 
I \in \Hilbs.
$$ 
As $\underline{f}^{*} = \underline{\Lambda}$ on $R^{\leq d}$, we have:
$$ 
f^{*} \in (I^d)^\bot,
$$ 
which proves the inclusion. 
\end{proof}

\begin{lemma}\label{lem:KinGamma} Given integers $r$, $d$ and $i$ such
  that $0\leq i\leq d$, we have $\Kak_{r}^{{d,0}}\subset \Gamma^{i,d-i}_{r}$.
\end{lemma}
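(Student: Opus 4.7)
The plan is to argue directly from the definitions, using two ingredients already developed in the text: the identification $\ker H^{i,d-i}_{f^{*}} = (f^{\bot})^{i}$ recorded right after Definition \ref{def:catalecticant}, and Lemma \ref{lem:apolar:saturated}, which gives the equivalence $\apolar{I^{d}}{f}=0 \Leftrightarrow I \subset (f^{\bot})$.

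First I would take $[f] \in \Kak_{r}^{d,0}$ and fix a witness $I \in \Hilbs$ with $s \leq r$ and $\apolar{I^{d}}{f}=0$. Applying Lemma \ref{lem:apolar:saturated} turns this into the ideal containment $I \subset (f^{\bot})$, and passing to the degree-$i$ part yields $I^{i} \subset (f^{\bot})^{i} = \ker H^{i,d-i}_{f^{*}}$. Consequently,
\[
\rank H^{i,d-i}_{f^{*}} = \dim_{\kk} S^{i} - \dim_{\kk} \ker H^{i,d-i}_{f^{*}} \leq \dim_{\kk}(S^{i}/I^{i}) = H_{S/I}(i).
\]

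Next I would establish the uniform-in-$i$ bound $H_{S/I}(i) \leq s$. Performing a generic linear change of coordinates, which is harmless because both $\rank H^{i,d-i}_{f^{*}}$ and membership in $\Gamma^{i,d-i}_{r}$ are intrinsic projective invariants of $[f]$, I can assume $(I:x_{0})=I$, i.e.\ that $x_{0}$ is a non-zero divisor in $S/I$. The remark from the preliminaries then provides the isomorphism $S^{i}/I^{i} \simeq R^{\leq i}/\underline{I}^{\leq i}$, and the family $\{R^{\leq i}/\underline{I}^{\leq i}\}_{i \geq 0}$ forms an exhausting filtration of the $s$-dimensional space $R/\underline{I}$, so every term of the filtration has dimension at most $s$. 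Combined with the displayed inequality, this gives $\rank H^{i,d-i}_{f^{*}} \leq s \leq r$, hence $[f] \in \Gamma^{i,d-i}_{r}$.

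The single point requiring care is precisely this last step: one needs the bound $H_{S/I}(i) \leq s$ for \emph{every} $i$, not merely for $i \gg 0$ where the Hilbert function stabilizes at $s$. This is where the hypothesis that $I$ is saturated, or equivalently that $S/I$ admits a degree-one non-zero divisor after a generic change of coordinates, enters the argument. Apart from that, the proof is a straightforward chain of implications.
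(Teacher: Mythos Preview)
Your proof is correct and follows essentially the same route as the paper's: apply Lemma~\ref{lem:apolar:saturated} to get $I^{i}\subset (f^{\bot})^{i}=\ker H_{f^{*}}^{i,d-i}$, then bound $\dim S^{i}/I^{i}\leq s$. The only difference is that the paper simply invokes the standard fact that the Hilbert function of a saturated zero-dimensional ideal of degree $s$ is nondecreasing and bounded by $s$, whereas you re-derive this bound by passing to the affine quotient $R/\underline{I}$ after a generic change of coordinates; both are valid and equivalent justifications.
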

\begin{proof} Let us prove that for all 
$f \in S^d$ such that $\apolar{I^{{d}}}{f}=0$ with $I \in
\Hilbs$ and $s \leq r$, we have: 
$$
\text{rank}(H_{f^{*}}^{i,d-i}) \leq s \leq r
$$ 
for all $0 \leq i \leq d$. 
By Lemma \ref{lem:apolar:saturated}, we have $I \subset J:=(f^{\bot})$ 
so that $I^{i}\subset J^{i}$ for $0 \leq i \leq d$.

As the Hilbert function of a saturated ideal $I\in \Hilbs$ is
increasing until degree $s$ and then it is constantly equal to $s$, we have
$$
\text{dim} \ S^i/I^i \leq s, \ \forall \ 0 \leq i \leq d.
$$
By the above inclusion, this implies that
$$
\text{dim} \ S^i/J^i \leq s, \ \forall \ 0 \leq i \leq d.
$$
As $J^i:= \ker \ H_{f^{*}}^{i,d-i}$, we deduce that
$$
\rank\ H_{f^{*}}^{i,d-i} \leq s, \ \forall \ 0 \leq i \leq d.
$$
Consequently as $s\leq r$, $f \in \Gamma^{i,d-i}_{r}$ and $\Kak^{{d,0}}_{r} \subset \Gamma^{i,d-i}_{r}.$
\end{proof}

\begin{corollary}\label{Kim:rank} For any homogeneous polynomial $f$ we have that $r_H(f)\leq r_{\mathrm{sch}}(f)\leq r_{\mathrm{sch^0}}(f)$.
\end{corollary}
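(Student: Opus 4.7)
The plan is to unpack the two inequalities separately, each as a consequence of a corresponding inclusion of varieties.

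First, for the inequality $r_{\mathrm{sch}}(f) \leq r_{\mathrm{sch^0}}(f)$: this is essentially tautological from the definitions. If $r = r_{\mathrm{sch^0}}(f)$, then $[f] \in \Kak_r^{d,0}$ by definition, and since $\Kak_r^{d}$ is the Zariski closure of $\Kak_r^{d,0}$, we have $\Kak_r^{d,0} \subset \Kak_r^{d}$, so $[f] \in \Kak_r^{d}$. Therefore $r_{\mathrm{sch}}(f) \leq r$.

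Second, for $r_H(f) \leq r_{\mathrm{sch}}(f)$: this requires combining Lemma \ref{lem:KinGamma} with the observation that $\Gamma_r^{i,d-i}$ is Zariski closed. Set $s = r_{\mathrm{sch}}(f)$, so that $[f] \in \Kak_s^{d}$. By Lemma \ref{lem:KinGamma}, $\Kak_s^{d,0} \subset \Gamma_s^{i,d-i}$ for every $0 \leq i \leq d$. Since $\Gamma_s^{i,d-i}$ is cut out by the $(s+1)\times(s+1)$ minors of the catalecticant matrices and is therefore Zariski closed in $\PP(S^d)$, taking closures yields
$$
\Kak_s^{d} = \overline{\Kak_s^{d,0}} \subset \Gamma_s^{i,d-i}, \qquad \forall \, 0 \leq i \leq d.
$$
Thus $[f] \in \Gamma_s^{i,d-i}$ for every $i$, which means $\rank H_{f^*}^{i,d-i} \leq s$ for all $i$. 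Taking the maximum over $i$ gives $r_H(f) \leq s = r_{\mathrm{sch}}(f)$.

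There is no real obstacle here: the only subtle point is noticing that Lemma \ref{lem:KinGamma} is stated for $\Kak_s^{d,0}$ but extends automatically to $\Kak_s^{d}$ by closedness of $\Gamma_s^{i,d-i}$. Once that is pointed out, the corollary follows by chaining the two inclusions $\Kak_r^{d,0} \subset \Kak_r^{d} \subset \Gamma_r^{i,d-i}$ and comparing to the definition of catalecticant rank (Definition \ref{cat}).
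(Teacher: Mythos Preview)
Your proof is correct and follows essentially the same route as the paper: invoke Lemma~\ref{lem:KinGamma} to get $\Kak_r^{d,0}\subset\Gamma_r^{i,d-i}$, use that $\Gamma_r^{i,d-i}$ is Zariski closed to pass to the closure $\Kak_r^{d}$, and then read off the rank inequalities from the chain $\Kak_r^{d,0}\subset\Kak_r^{d}\subset\Gamma_r^{i,d-i}$. The only difference is that you spell out the final step (taking the maximum over $i$ to recover $r_H(f)$) more explicitly than the paper does.
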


\begin{proof} By Lemma \ref{lem:KinGamma} we have that $\Kak_{r}^{{d,0}}\subset \Gamma^{i,d-i}_{r}$. Now since $\Gamma^{i,d-i}_{r}$ is closed by definition, we get that  $\Kak_{r}^{{d,0}}\subset\overline{\Kak_{r}^{{d,0}}}=\Kak_{r}^{{d}}\subset \Gamma^{i,d-i}_{r}$ that implies that $r_H(f)\leq r_{\mathrm{sch}}(f)\leq r_{\mathrm{sch^0}}(f)$. 
\end{proof}

\begin{lemma} \label{cor:jerome}
Let $d\ge r$ and $E\subset S^{{d}}$ such that $S^{{d}}/E$ is of dimension
$r$. Then for a generic change of coordinates $g \in PGL (n+1)$,
$S^{{d}}/g\cdot E$ has a monomial basis of the form $x_{0}B$ with $B
\subset S^{d-1}$. Moreover, $\underline{B}$ is connected to $1$.
\end{lemma}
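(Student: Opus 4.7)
The plan is to invoke Galligo's theorem on generic initial ideals. Fix the degree-reverse-lexicographic term order $\prec$ on $S$ with $x_{0} \prec x_{1} \prec \cdots \prec x_{n}$ (so $x_{0}$ is the smallest variable) and extend $E$ to the homogeneous ideal $J := S\cdot E + (x_{0},\ldots, x_{n})^{d+1}$, whose degree-$d$ component is exactly $E$. By Galligo's theorem, for a generic $g\in PGL(n+1)$ the initial ideal $\In_{\prec}(g\cdot J)$ is Borel-fixed, and restricting to degree $d$ the initial subspace $\In_{\prec}(g\cdot E)\subset S^{d}$ inherits the Borel-fixed property: whenever a monomial $m\in \In_{\prec}(g\cdot E)$ is divisible by $x_{i}$, then $x_{j}\,m/x_{i}\in \In_{\prec}(g\cdot E)$ for every $j>i$. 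Equivalently, the set $\mathrm{std}$ of standard monomials in $S^{d}$ (of cardinality $r=\dim S^{d}/g\cdot E$) is closed under the reverse operation: if $m\in \mathrm{std}$ and $x_{j}\mid m$ with $j\geq 1$, then $x_{0}\,m/x_{j}\in \mathrm{std}$.

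The heart of the argument, and where the hypothesis $r\leq d$ enters decisively, is to show $\mathrm{std}\subset x_{0}\cdot S^{d-1}$. Suppose for contradiction that some $m=x_{0}^{a_{0}}\cdots x_{n}^{a_{n}}\in \mathrm{std}$ satisfies $a_{0}=0$. Iteratively applying the stability move $x_{i}\to x_{0}$ (for any index $i$ with $a_{i}>0$) produces a chain of $d+1$ distinct standard monomials $m=m_{0},m_{1},\ldots,m_{d}=x_{0}^{d}$, distinguishable by their $x_{0}$-degrees $0,1,\ldots,d$. This gives $|\mathrm{std}|\geq d+1>d\geq r$, contradicting $|\mathrm{std}|=r$. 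Hence every standard monomial is divisible by $x_{0}$, and one may write $\mathrm{std}=\{x_{0}m : m\in B\}$ for some set $B$ of $r$ monomials in $S^{d-1}$. Since standard monomials descend to a basis of $S^{d}/g\cdot E$, the family $\{x_{0}m : m\in B\}$ is the required monomial basis.

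The same chain argument applied to any starting element of $\mathrm{std}$ shows $x_{0}^{d}\in \mathrm{std}$, so $x_{0}^{d-1}\in B$ and $1=\underline{x_{0}^{d-1}}\in \underline{B}$. For any $\underline{m}\in \underline{B}$ with $\underline{m}\neq 1$, choose $j\geq 1$ with $x_{j}\mid \underline{m}$; then the corresponding $m\in B\subset S^{d-1}$ satisfies $x_{0}m\in \mathrm{std}$ and $x_{j}\mid x_{0}m$, so stability of $\mathrm{std}$ produces $x_{0}^{2}m/x_{j}\in \mathrm{std}$, whence $m':= x_{0}m/x_{j}\in B$ and $\underline{m'}=\underline{m}/x_{j}\in \underline{B}$. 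This establishes that $\underline{B}$ is connected to $1$. The main subtlety is the correct invocation of Galligo's theorem for a single-graded subspace rather than a full ideal, handled by passing to the auxiliary ideal $J$ and restricting to degree $d$; thereafter the statement becomes a clean combinatorial consequence of Borel closure combined with the hypothesis $r\leq d$.
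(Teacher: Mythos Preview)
Your proof is correct, and while it shares the same opening move as the paper---pass to the generic initial ideal and exploit Borel-fixedness so that the complement of $\In(g\cdot E)$ in degree $d$ is closed under replacing any $x_j$ ($j\ge 1$) by $x_0$---the heart of the argument is genuinely different. The paper establishes $J^{d}+x_{0}S^{d-1}=S^{d}$ via an exact sequence $0\to S^{d-1}/L^{d-1}\to S^{d}/J^{d}\to S'^{d}/J'^{d}$ with $L=(J:x_{0})$, and then invokes Gotzmann's persistence/regularity bound to force a dimension contradiction. You instead give a direct pigeonhole: a standard monomial with $x_{0}$-degree $0$ spawns, by repeated Borel moves toward $x_{0}$, a chain of $d+1$ distinct standard monomials (one for each $x_{0}$-degree $0,\ldots,d$), contradicting $|\mathrm{std}|=r\le d$. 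This is strictly more elementary---it avoids Gotzmann entirely and makes the role of the hypothesis $d\ge r$ completely transparent---while the paper's route has the mild advantage of fitting into the standard Hilbert-scheme regularity machinery that is used elsewhere. Your treatment of ``connected to $1$'' is essentially identical to the paper's. One small remark: you do not actually need degree-reverse-lex specifically; any term order with $x_{0}$ smallest gives a Borel-fixed gin with the stability direction you need, so the particular choice is immaterial.
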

\begin{proof}
Let $\succ$ be the lexicographic ordering such that $x_{0}\succ \cdots \succ x_{n}$.
By \cite{Eis94}[Theorem 15.20, p. 351], after a generic change of coordinates,
the initial $J$ of the ideal $I= (E)$ for $\succ$ is Borel
fixed. That is, if $x_{i}\xb^{\alpha}\in J$ then $x_{j}\xb^{\alpha}\in
J$ for $j>i$. 

To prove that there exists a subset $B$ of monomials of
degree $d-1$ such that $x_{0}\, B$ is a basis of $S^{d}/I^{d}$, we show
that $J^{d}+ x_{0} S^{d-1}= S^{d}$. 
Let $J'^{d}= (J^{d}+x_{0} S^{d-1})/x_{0} S^{d-1}$, 
$S'^{d}=S^{d}/x_{0} S^{d-1}=\kk [x_{1},\ldots, x_{n}]$ and
$L = (J:x_{0})$. Then we have the exact sequence
$$ 
0 \rightarrow S^{d-1}/L^{d-1} \stackrel{\mu_{x_{0}}}{\longrightarrow} S^{d}/J^{d} \rightarrow S'^{d}/J'^{d},
$$
where $\mu_{x_{0}}$ is the multiplication by $x_{0}$.
Let us denote by $s_{k}=\dim S^{k}$ and $q (k) =s_{k}-r$ for $k\in \NN$.
Suppose that $\dim S'^{d}/J'^{d} >0$, then $\dim L^{d-1}>s_{d-1}-r=q (d-1)$.
As $d\ge r$ and $r$ is the Gotzmann regularity of $q$, by
\cite [(2.10), p. 66]{Gotz78} we have $\dim S^{1}\, L^{d-1} > q (d)$.
As $J$ is Borel fix, i.e. $x_{0}\, p\in J$ implies $x_{i}\, p \in J$
for $i\ge 0$, we have  $S^{1}\, L^{d-1} \subset J$, so that 
$\dim J^{d} \ge \dim S^{1}\, L^{d-1}> q (d)=s_{d}-r$. This implies that
$\dim S^{d}/J^{d}= \dim S^{d}/I^{d} =\dim S^{d}/E < r$, which contradicts the
hypothesis on $E$. Thus $J^{d}+ x_{0} S^{d-1}= S^{d}$. 

Let $B'$ be the complementary of $J^{d}$ in the set of monomials of degree $d$.
The sum $S^{d}= J^{d}+ x_{0} S^{d-1}$ shows that $B'=x_{0} B$ for some subset $B$ of
monomials of degree $d-1$. 

As $J^{d}$ is Borel fix and different from $S^{d}$, its complementary
$B'$ contains $x_{0}^{d}$. Similarly we check that if $x_{0}^{\alpha_{0}} \cdots
x_{n}^{\alpha_{n}}\in B'$ with $\alpha_{1}= \cdots=\alpha_{k-1}=0$ and
$\alpha_{k}\neq 0$ then  $x_{0}^{\alpha_{0}+1} \, x_{k}^{\alpha_{k}-1}\,x_{k+1}^{\alpha_{k+1}}\cdots
x_{n}^{\alpha_{n}}\in B'$. This shows that
$\underline{B'}=\underline{B}$ is connected to $1$.
\end{proof} 

The equality $ \Kak_{r}^{{d}} = \Gamma_{r}^{i,d-i}$ for $d\geq 2r$ and $ r \leq i \leq d-r$, that appears in the following theorem was proved, with a different technique, in \cite[Theorem 1.7]{Buczynska:2010kx}.

\begin{theorem}\label{thm:cactus.catalecticant}
For integers $r$, $d$ and $i$ such that $d\geq 2r$, $ r \leq i \leq d-r$, we have
$$
\Gh_{r}^{{d,0}} = \Gh_{r}^{{d}} =  
\Kak_{r}^{{d,0}} = \Kak_{r}^{{d}} =  
\Eh_{r}^{{d,0}} =  \Eh_{r}^{{d}} = 
\Gamma_{r}^{i,d-i}.
$$
\end{theorem}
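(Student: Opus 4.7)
The inclusions $\Gh_{r}^{d,0}\subseteq\Kak_{r}^{d,0}\subseteq\Gamma_{r}^{i,d-i}$ are already given, for every $0\leq i\leq d$, by Lemmas \ref{lem:GinK} and \ref{lem:KinGamma}. Since $\Gamma_{r}^{i,d-i}$ is the determinantal variety cut out by the $(r+1)\times(r+1)$ minors of the catalecticant matrices, it is closed, so the same chain passes to Zariski closures. The whole theorem will therefore follow once the reverse inclusion $\Gamma_{r}^{i,d-i}\subseteq\Gh_{r}^{d,0}$ is established under the hypothesis $r\le i\le d-r$, together with the identification of the extension-rank sets $\Eh_{r}^{d,0}=\Eh_{r}^{d}$ with this common variety.

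To prove $\Gamma_{r}^{i,d-i}\subseteq \Gh_{r}^{d,0}$, I fix $[f]\in\Gamma_{r}^{i,d-i}$ and set $s:=\mathrm{rank}\,H_{f^{*}}^{i,d-i}\le r$. A generic $g\in PGL(n+1)$ can be chosen in the intersection of the two Zariski-open conditions required to apply Lemma \ref{cor:jerome} at degrees $i$ and $d-i$ (both valid since $i,d-i\ge r\ge s$), yielding monomial bases $x_{0}B$ of $S^{i}/(g\cdot f^{\bot})^{i}$ and $x_{0}B'$ of $S^{d-i}/(g\cdot f^{\bot})^{d-i}$ whose dehomogenizations $\underline B\subset R^{\le i-1}$ and $\underline B'\subset R^{\le d-i-1}$ have cardinality $s$ and are connected to $1$. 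Setting $M=R^{\le i}$, $M'=R^{\le d-i}$, one has $\underline B^{+}\cdot \underline B'^{+}\subseteq R^{\le d}=M\cdot M'$. The hypothesis gives $\mathrm{rank}\,\Hkl^{M,M'}_{\underline{g\cdot f}^{*}}=s$, and the equality $\mathrm{rank}\,\Hkl^{\underline B,\underline B'}_{\underline{g\cdot f}^{*}}=s$ holds because $\underline B$ projects isomorphically onto $R^{\le i}/\underline{(g\cdot f^{\bot})^{i}}$, so the Hankel image has dimension $s$ on $\langle \underline B\rangle$; this image lies in the annihilator of $\underline{(g\cdot f^{\bot})^{d-i}}$ inside $(R^{\le d-i})^{*}$, and $\underline B'$ being a basis of the corresponding quotient makes restriction to $\underline B'$ injective on that annihilator. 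Theorem \ref{thm:decomp.generalise.extension} then produces an extension $\tilde\Lambda\in R^{*}$ of $\underline{g\cdot f}^{*}$ with $\mathrm{rank}\,\Hkl_{\tilde\Lambda}=s$, so that $\ker \Hkl_{\tilde\Lambda}\subset R$ is zero-dimensional of multiplicity $s$. Theorem \ref{thm:dual:diff} and Proposition \ref{prop:mult:dual} decompose $\tilde\Lambda=\sum_{j=1}^{m}\1_{\zeta_{j}}\circ\phi_{j}(\partial)$ with inverse system of total dimension $s$; restricting to $R^{\le d}$ exhibits a generalized affine decomposition of $g\cdot \underline f^{*}$ of size $s\le r$, so $[f]\in \Gh_{r}^{d,0}$.

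Combining with the first paragraph yields $\Gh_{r}^{d,0}=\Kak_{r}^{d,0}=\Gamma_{r}^{i,d-i}$ for each $r\le i\le d-r$; passing to closures gives $\Gh_{r}^{d}=\Kak_{r}^{d}=\Gamma_{r}^{i,d-i}$ as well. For the extension-rank side, $d\ge 2r$ forces $m=\lceil d/2\rceil\ge r$ and $m'=\lfloor d/2\rfloor\ge r-1$, hence $m+m'-d=0$ in the definition of $\Eh_{r}^{d,0}$; the observation immediately following that definition then identifies $\Eh_{r}^{d,0}=\Eh_{r}^{d}=\Gamma_{r}^{m,m'}$, which is one of our $\Gamma_{r}^{i,d-i}$ since $r\le m\le d-r$. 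The main obstacle is the rank identity $\mathrm{rank}\,\Hkl^{\underline B,\underline B'}_{\underline{g\cdot f}^{*}}=s$, since it is the pivot allowing the flat-extension theorem to apply; its verification is the only place where both the genericity of $g$ and the bound $d\ge 2r$ are essentially used, through Lemma \ref{cor:jerome} (Borel-fixed initial ideal plus Gotzmann regularity).
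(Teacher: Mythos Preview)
Your proof is correct and follows essentially the same route as the paper: the key inclusion $\Gamma_{r}^{i,d-i}\subseteq\Gh_{r}^{d,0}$ is obtained through the same chain of Lemma~\ref{cor:jerome} (connected monomial bases after a generic change of coordinates), Theorem~\ref{thm:decomp.generalise.extension} (flat extension of $\underline{f}^{*}$), and Theorem~\ref{thm:dual:diff} (differential decomposition of the extended linear form), with the remaining inclusions supplied by Lemmas~\ref{lem:GinK} and~\ref{lem:KinGamma} and closure. Your identification $\Eh_{r}^{d,0}=\Gamma_{r}^{\lceil d/2\rceil,\lfloor d/2\rfloor}$ is in fact slightly more precise than the paper's line ``$\Eh_{r}^{d,0}=\Gamma_{r}^{r,d-r}$'', though once all the $\Gamma_{r}^{i,d-i}$ with $r\le i\le d-r$ have been shown equal this distinction is immaterial.
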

\begin{proof} Let $d\geq 2r$ and $r\leq i \leq d-r$. 
We first prove the  following inclusion:
\begin{equation}
\Gamma_{r}^{i,d-i} \subset \Gh_{r}^{{d,0}}.
\end{equation}
Let us fix $[f] \in \Gamma_{r}^{i,d-i}$ for an integer $r \leq i \leq d-r$. 
Let us denote $E:= \text{Ker}(H_{f^{*}}^{i,d-i})$ and $F:= \text{Ker}(H_{f^{*}}^{d-i,i})$ and  $k \leq r$ the rank of  $\Cat_{f^{*}}^{i,d-i}$. We recall that:
$$
\Cat_{f^{*}}^{i,d-i} = {}^t\Cat_{f^{*}}^{d-i,i}.
$$ 
The quotients $S^i/E$ and $S^{d-i}/F$ are thus of dimension $k$. 
As $k \leq r \leq i$ and $k \leq r \leq d-i$, by Lemma \ref{cor:jerome} and a generic change of coordinates
we may assume that there exists a family $B$ (resp. $B'$) of $k$ monomials of $S^{i-1}$ (resp. $S^{d-i-1}$) 
such that  $x_0\,B$ (resp. $x_0\,B'$) is a basis of $S^i/E$
(resp. $S^{d-i}/ F$) and that 
$$
\underline{B} \subset R^{\leq i-1} \ (\text{resp. } \underline{B}' \subset R^{\leq d-i-1})
$$
are connected to $1$. Notice then that 
$$
\Hkl_{\underline{f}^*}^{\underline{B}',\underline{B}} = \Cat_{f^{*}}^{x_0\,B',x_0\,B}
$$ 
is an invertible matrix of size $k \times k$.  As the monomials of $\underline{B}$
are in $R^{\le i-1}$ (resp. $R^{\le d-i-1}$), the monomials of
$\underline{B}^+$ (resp. $\underline{B}'^+$) are in the 
set $M$ (resp. $M'$) of monomials of degree $\le i$ (resp. $\le d-i$)
and $B^{+}\cdot B'^{+} \subset M\cdot M'$.
Moreover, we have
$$
\rank\ \Hkl_{\underline{f}^*}^{\underline{B}',\underline{B}} 
= \rank \Cat_{f^{*}}^{x_0\,B',x_0\,B}
= \rank\ \Hkl_{\underline{f}^*}^{M',M} 
= \rank \Cat_{f^{*}}^{i,d-i} = k.
$$ 
By Theorem \ref{thm:decomp.generalise.extension}, there exists
a linear form $\Lambda\in R^{*}$ which extends $\uf^{*}$ such that
$\dim_{\kk} ( R/I_{\Lambda})=r$ where $I_{\Lambda}=\ker H_{\Lambda}$.
By Theorem \ref{thm:dual:diff}, as $\Lambda \in I_{\Lambda}^{\bot}$, there exists 
$\zeta_{1}, \ldots, \zeta_{m} \in \kk^{n}$, differential polynomials 
$\phi_{1}, \ldots, \phi_{m} \in \kk[\partial_{1},
\ldots,\partial_{n}]$ such that 
$$ 
\Lambda = \sum_{i=1,...,m} \1_{\zeta_i} \circ \phi_i(\partial).
$$
As the inverse system spanned by $\Lambda$ is included in
$(I_{\Lambda})^{\bot}$ which is of dimension $s$ and as $\Lambda$
coincides with $\uf^{*}$ on $R^{\le d}$, 
$f$ has a generalized affine decomposition of size $\le s \le r$
and $[f] \in \Gh_{r}^{{d}}$. 
This proves that
$$
\Gamma_{r}^{i,d-i} \subset  \Gh_{r}^{{d,0}}.
$$
Using Lemmas \ref{lem:GinK} and \ref{lem:KinGamma}, we have 
\begin{equation}\label{equ:cactus.catalecticant}
\Gamma_{r}^{i,d-i} \subset \Gh_{r}^{{d,0}} \subset \Kak_{r}^{{d,0}}  \subset \Gamma_{r}^{i,d-i}.
\end{equation}
By definition, $\Eh_{r}^{{d,0}} = \Gamma_{r}^{r,d-r}$ when $d\geq
2\,r$. 
As $\Gamma_{r}^{i,d-i}$ is closed, the previous inclusions show that
$\Gh_{r}^{{d,0}} = \Gh_{r}^{{d}} =  
\Kak_{r}^{{d,0}} = \Kak_{r}^{{d}} =  
\Eh_{r}^{{d,0}} =  \Eh_{r}^{{d}} = 
\Gamma_{r}^{i,d-i}.$
\end{proof}

To analyze the relationship between the sets
$\Gh_{r}^{d}$, $\Eh_{r}^{d}$,  $\Kak_{r}^{d}$ for general $r,d \in
\NN$, we need the following lemma.
\begin{lemma}\label{lem:KinH}  
Given  $r$, $d$ $\in \NN$, $\ub \in S^{1}$, $f\in S^{d}$ and
$I\in \Hilbr$ such that $\ub\neq 0$ is a non-zero divisor for $I$, 
there exists a flat extension $\tilde{f}\in \Gamma_{r}^{m+m'}$ of $f$ such that $\ub^{m+m'-d}\cdot
\tilde{f}^{*} = f^{*}$, and we have $\Kak_{r}^{{d,0}}\subset \Eh^{d,0}_{r}$.
\end{lemma}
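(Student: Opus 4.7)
The plan is to construct $\tilde f$ by ``lifting'' $f^{*}$ step by step through the graded pieces of the inverse system $I^{\bot}$ by pre-multiplication by $\ub$. First, Lemma \ref{lem:apolar:saturated} applied to the hypothesis $\apolar{I^{{d}}}{f}=0$ gives $I \subset (f^{\bot})$, and in particular $f^{*} \in (I^{d})^{\bot}$.

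Because $\ub$ is a non-zero divisor on $S/I$, for every $n\ge 0$ the multiplication map $\mu_{\ub}\colon S^{n}/I^{n}\hookrightarrow S^{n+1}/I^{n+1}$ is injective. Dualising this injection between finite-dimensional spaces produces the surjection $\ub\cdot\colon (I^{n+1})^{\bot} \twoheadrightarrow (I^{n})^{\bot}$, where the action is defined by $(\ub\cdot \Lambda)(g)=\Lambda(\ub\,g)$. Iterating this surjection $m+m'-d$ times, starting from $f^{*}\in (I^{d})^{\bot}$, yields some $\tilde f^{*}\in (I^{m+m'})^{\bot}$ with $\ub^{m+m'-d}\cdot \tilde f^{*}=f^{*}$. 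Identifying $\tilde f^{*}$ with a polynomial $\tilde f\in S^{m+m'}$ via the (non-degenerate) apolar pairing gives the required candidate extension.

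It remains to bound $\rank H^{m,m'}_{\tilde f^{*}}$. Since $\tilde f^{*}\in (I^{m+m'})^{\bot}$, Lemma \ref{lem:apolar:saturated} applied to $\tilde f$ yields $I\subset (\tilde f^{\bot})$, so $I^{m}\subset (\tilde f^{\bot})^{m}=\ker H^{m,m'}_{\tilde f^{*}}$. Consequently
$$
\rank H^{m,m'}_{\tilde f^{*}} \;=\; \dim S^{m}/(\tilde f^{\bot})^{m} \;\le\; \dim S^{m}/I^{m} \;=\; H_{S/I}(m) \;\le\; r,
$$
the last inequality holding because $I$ is saturated and zero-dimensional of degree at most $r$. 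Therefore $[\tilde f]\in \Gamma^{m,m'}_{r}$, and so $[f]\in \Eh^{d,0}_{r}$.

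For the inclusion $\Kak^{d,0}_{r}\subset \Eh^{d,0}_{r}$, take any $[f]\in \Kak^{d,0}_{r}$; it is apolar to some saturated zero-dimensional ideal $I$ of degree $s\le r$. Since $S/I$ has only finitely many associated primes, a generic $\ub\in S^{1}$ avoids all of them and hence is a non-zero divisor on $S/I$, so the construction above applies and provides a flat extension $\tilde f$ witnessing $[f]\in \Eh^{d,0}_{r}$. The only delicate technical point is checking the duality step—verifying that the transpose of $\mu_{\ub}$ sends $(I^{n+1})^{\bot}$ onto $(I^{n})^{\bot}$—but this is a routine dimension count once one identifies $(S^{n}/I^{n})^{*}$ with $(I^{n})^{\bot}$.
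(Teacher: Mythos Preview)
Your proof is correct and follows essentially the same idea as the paper's: both extend $f^{*}$ to a higher degree using that multiplication by $\ub$ is injective on $S/I$, then bound the catalecticant rank via $H_{S/I}$. The only cosmetic difference is that the paper dehomogenizes (after a change of coordinates sending $\ub$ to $x_{0}$, it passes to $R=\kk[x_{1},\dots,x_{n}]$, extends $\underline{f}^{*}$ from $(R^{\le d}/\underline{I}^{\le d})^{*}$ to all of $(R/\underline{I})^{*}$, and then restricts to degree $\le m+m'$), whereas you stay in the graded setting and lift degree by degree through the surjections $\ub\cdot\colon (I^{n+1})^{\bot}\twoheadrightarrow (I^{n})^{\bot}$; these two packagings of the extension step are equivalent.
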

\begin{proof}
Let $f \in S^{{d}}$ with $f^{*} \in ( I^d )^\bot$, $I \in \Hilbr$.

Let $\ub \in S^{1}$ be a linear form such that $( I:\ub )=I$.  By a change of coordinates, 
we can assume that $\ub = x_0$. 

We denote by $\underline{I}\subset R$ the dehomogenization of $I
\subset S$ (setting $x_{0}=1$).  As $I \in \Hilbr$ and
$(I:x_{0})=I$, the quotient algebra $R/\underline{I}$  
is a $\KK$-vector space of dimension $r$. By the following natural 
isomorphism: 
\begin{equation}\label{equ:identification} 
(I^d)^\bot \simeq (S^d/I^d)^* \simeq (R^d/\underline{I}^{\leq d})^*. 
\end{equation} 
a linear form $f^{*} \in (I^d)^\bot$ corresponds to a linear form $\underline{f}^{*} \in (R^d/\underline{I}^{\leq d})^*$. As  
$$ 
R^{{d}}/\underline{I}^{\leq d} \hookrightarrow R/\underline{I},
$$ 
the linear form $\underline{f}^{*} \in (R^d/\underline{I}^{\leq d})^*$ can be extended (not necessary in a unique way) to a linear 
form $\phi \in (R/\underline{I})^*$.  

As the ideal $I^{\phi}= \ker H_{\phi}$ contains $\underline{I}$ and 
$\dim R/\underline{I} = r$, we deduce that $\Hkl_{\phi}$ is of rank
$\le r$. 
By restriction, $\Hkl_{\tilde{\phi}}^{m,m}$ is of rank $\le r$,
where $\tilde{\phi}=\phi_{|R^{\leq 2 m}}$. 
The restriction of $\tilde{\phi}$ to $R^{\le d}$ is
$\underline{f}^{*}$. 

For any $[f] \in \Kak_{r}^{d,0}$, there exists $I\in \Hilbr$ such that
$f^{*} \in ( I^d )^\bot$.
As $I$ is a saturated ideal, it is always possible to find $\ub\in
S^{1}$ such that $\ub \neq 0$ is non-zero divisor for $I$.
By the previous construction, we can find a flat extension 
$[\tilde{f}] \in \Gamma^{m+m'}_{r}$ of $[f]$, which this shows that
$[f]\in \Eh_{r}^{{d,0}}$
and that $\Kak_{r}^{d,0} \subset \Eh_{r}^{{d,0}}$.
\end{proof}

\begin{theorem}\label{prop:direct} 
Let $f\in S^{{d}}$. The following are equivalent:
\begin{itemize}
 \item There exists a zero-dimensional saturated ideal $I$ defining
   $\le r$ points counted with multiplicity such that $I\subset
   (f^{\bot})$;
 \item $f$ has a generalized decomposition of size $\le r$;
 \item $f$ has a flat extension of size $\le r$.
\end{itemize}
In other words,  
\begin{equation}\label{equ:direct} 
\Gh_{r}^{{d,0}} = \Kak_{r}^{{d,0}} = \Eh_{r}^{{d,0}}.  
\end{equation} 
and $r_{\Gh^0}(f)=r_{\mathrm{sch^0}}(f)=r_{\Eh^0}(f)$.
\end{theorem}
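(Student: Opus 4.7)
The plan is to close the chain $\Gh_r^{d,0}\subset\Kak_r^{d,0}\subset\Eh_r^{d,0}\subset\Gh_r^{d,0}$. The first two inclusions are already supplied by Lemmas \ref{lem:GinK} and \ref{lem:KinH}, so only the direction $\Eh_r^{d,0}\subset\Gh_r^{d,0}$ remains, and the equality of the three ranks then follows formally from the set-theoretic equality.

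To establish the missing direction, I would essentially transport the scheme from the proof of Theorem \ref{thm:cactus.catalecticant} to the general regime where $d$ can be smaller than $2r$. Start from $[f]\in\Eh_r^{d,0}$: by definition there exist $\ub\in S^{1}\setminus\{0\}$ and $[\tilde f]\in\Gamma_r^{m,m'}$ with $m=\max\{r,\lceil d/2\rceil\}$ and $m'=\max\{r-1,\lfloor d/2\rfloor\}$ such that $\ub^{m+m'-d}\cdot\tilde f^{*}=f^{*}$. After a linear change of coordinates sending $\ub$ to $x_{0}$, the dehomogenization $\underline{\tilde f}^{*}\in(R^{\le m+m'})^{*}$ extends $\uf^{*}$ on $R^{\le d}$ and the catalecticant $H^{m,m'}_{\tilde f^{*}}$ has some rank $k\le r$. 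The next step is to perform a further generic coordinate change and invoke Lemma \ref{cor:jerome} to produce monomial sets $B\subset S^{m-1}$ and $B'\subset S^{m'-1}$ of cardinality $k$ such that $x_{0}B$ and $x_{0}B'$ are bases of $S^{m}/\ker H^{m,m'}_{\tilde f^{*}}$ and $S^{m'}/\ker H^{m',m}_{\tilde f^{*}}$, with $\underline B$ and $\underline{B'}$ connected to $1$.

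Having $B,B'$ in hand, set $M$ (resp.\ $M'$) to be the monomials of $R$ of degree $\le m$ (resp.\ $\le m'$); then $\underline B^{+}\cdot\underline{B'}^{+}\subset M\cdot M'$ by construction, and the rank equalities $\rank \Hkl^{\underline B,\underline{B'}}_{\underline{\tilde f}^{*}}=\rank \Hkl^{M,M'}_{\underline{\tilde f}^{*}}=k$ hold because $B$ and $B'$ realize bases of the relevant quotients. Theorem \ref{thm:decomp.generalise.extension} then delivers an extension $\Lambda\in R^{*}$ of $\underline{\tilde f}^{*}$ with $\rank \Hkl_{\Lambda}=k$, so that $I_{\Lambda}=\ker \Hkl_{\Lambda}$ is a zero-dimensional affine ideal with $\dim_{\kk}R/I_{\Lambda}=k$. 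Feeding $\Lambda\in I_{\Lambda}^{\bot}$ into Theorem \ref{thm:dual:diff} expresses it as $\sum_{i=1}^{s}\1_{\zeta_{i}}\circ\phi_{i}(\partial)$ with distinct $\zeta_{i}\in\kk^{n}$ and differential polynomials $\phi_{i}$; since $\Lambda$ restricts to $\uf^{*}$ on $R^{\le d}$, this furnishes a generalized affine decomposition of $f$ of size $\le k\le r$, so $[f]\in\Gh_r^{d,0}$.

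The main obstacle I anticipate is the tight numerical bookkeeping around the hypothesis $d\ge r$ required by Lemma \ref{cor:jerome}: applying that lemma symmetrically on both sides demands $m\ge k$ and $m'\ge k$. The condition $m\ge r\ge k$ is automatic from the definition of $m$, but $m'\ge r-1$ only yields $m'\ge k$ when $k\le r-1$; the borderline case $k=r$ combined with $d<2r$ (which forces $m'=r-1<k$) has to be treated by a mild refinement of Lemma \ref{cor:jerome} exploiting the fact that $\dim S^{r-1}=\binom{r-1+n}{n}\ge r$ for $n\ge 1$, so a Borel-fixed initial-ideal argument still produces a connected-to-$1$ monomial basis of the $r$-dimensional quotient. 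Once this refinement is in place the plan proceeds exactly as above and the claimed equalities $r_{\Gh^0}(f)=r_{\mathrm{sch^0}}(f)=r_{\Eh^0}(f)$ follow.
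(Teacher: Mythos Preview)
Your argument is correct and follows the same overall chain $\Gh_r^{d,0}\subset\Kak_r^{d,0}\subset\Eh_r^{d,0}\subset\Gh_r^{d,0}$ as the paper, with the first two inclusions taken from Lemmas~\ref{lem:GinK} and~\ref{lem:KinH}. The only genuine difference is in the third step. You inline the machinery of Lemma~\ref{cor:jerome}, Theorem~\ref{thm:decomp.generalise.extension} and Theorem~\ref{thm:dual:diff} directly on $\tilde f$; the paper instead observes that $\tilde f$ lives in degree $m+m'\ge 2r$ (up to the typos $\Gamma_r^{m,m}$ and $\ub^{m-r}$ in the paper, which should read $\Gamma_r^{m,m'}$ and $\ub^{m+m'-d}$), invokes Theorem~\ref{thm:cactus.catalecticant} as a black box to get $[\tilde f]\in\Gh_r^{m+m',0}$, and then notes that $\underline f^{*}=\underline\ub^{\,m+m'-d}\cdot\underline{\tilde f}^{*}$ is obtained from the generalized decomposition of $\tilde f$ by repeated derivation, which can only shrink the inverse system. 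This bootstrapping is a bit more economical than redoing the flat--extension argument, but the content is the same. Your flagged borderline case $k=r$, $m'=r-1$ is a genuine off-by-one issue that the paper's index slips obscure rather than resolve; your awareness of it is a point in your favour, though the ``mild refinement'' you sketch needs $B'\subset S^{m'-1}=S^{r-2}$ rather than $S^{r-1}$, so the counting you give should be shifted by one.
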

\begin{proof} 
By Lemma \ref{lem:GinK}, we have $\Gh_{r}^{{d,0}}  \subset 
\Kak_{r}^{{d,0}}$.
By Lemma \ref{lem:KinH}, we have $\Kak^{{d,0}}_{r}\subset \Eh_{r}^{{d,0}}$.

Let us prove that $\Eh_{r}^{{d,0}}\subset
\Gh_{r}^{{d,0}}$. For any $[f]\in \Eh_{r}^{{d,0}}$, there exists 
$[\tilde{f}]\in \Gamma^{m,m}_{r}$ and $\ub \in S^{1}$ such that 
$\ub^{m-r}\cdot \tilde{f}^{*}= f^{*}$. By
Theorem~\ref{thm:cactus.catalecticant},  $[\tilde{f}]\in
\Gh_{r}^{2m,0}$. Thus after some change of
coordinates, $\tilde{f}$ has an affine generalized
decomposition of the form:
$$ 
\tilde{\underline{f}}^{*}= \sum_{i=1,...,m} \1_{\zeta_i} \circ \phi_i(\partial).
$$
As $f^{*} = \ub^{m-r}\cdot \tilde{f}^{*}$, we have 
$$ 
\underline{f}^{*} =\underline{\ub}^{2m-r}\cdot
\tilde{\underline{f}}^{*}
= \sum_{i=1,...,m} \underline{\ub}^{2m-r}\cdot ( \1_{\zeta_i} \circ
\phi_i(\partial))
= \sum_{i=1,...,m} \1_{\zeta_i} \circ \phi'_i(\partial)
$$
where $\phi'_{i}$ is obtained from $\phi_{i}$ by derivation.
As $\phi'_{i}$ is obtained from $\phi_{i}$ by derivation, the inverse system
spanned by $\1_{\zeta_i} \circ \phi'_i(\partial)$ is included in the inverse
system spanned by $\1_{\zeta_i} \circ \phi_i(\partial)$. Thus
$f$ has an affine decomposition 
$\underline{f}^{*} = \sum_{i=1,...,m} \1_{\zeta_i} \circ
\phi'_i(\partial)$ of size $\leq r$ and $[f]\in \Gh_{r}^{{d,0}}$. 

This shows that 
$$ 
\Gh_{r}^{{d,0}} \subset   \Kak_{r}^{{d,0}}\subset \Eh_{r}^{{d,0}} \subset \Gh_{r}^{{d,0}}.
$$
and concludes the proof of the theorem.
\end{proof}

\begin{corollary}\label{cor}
$\Gh_{r}^{{d}} = \Kak_{r}^{{d}} = \Eh_{r}^{{d}}$ 
and $r_{\Gh}(f)=r_{\mathrm{sch}}(f)=r_{\Eh}(f)$.
\end{corollary}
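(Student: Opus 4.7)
The plan is to deduce this corollary directly from Theorem \ref{prop:direct} by passing to Zariski closures. First I would recall, from the definitions given in Section \ref{Ranks}, that the three border objects involved are precisely defined as closures of their open counterparts: $\Gh_{r}^{{d}} = \overline{\Gh_{r}^{{d,0}}}$, $\Kak_{r}^{{d}} = \overline{\Kak_{r}^{{d,0}}}$, and $\Eh_{r}^{{d}} = \overline{\Eh_{r}^{{d,0}}}$, where the closure is taken in $\PP(S^{d})$ with respect to the Zariski topology.

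Next I would apply Theorem \ref{prop:direct}, which gives the set equalities $\Gh_{r}^{{d,0}} = \Kak_{r}^{{d,0}} = \Eh_{r}^{{d,0}}$. Since the Zariski closure operation is monotone and functorial with respect to equality of subsets, applying it to all three sides yields
$$
\Gh_{r}^{{d}} = \overline{\Gh_{r}^{{d,0}}} = \overline{\Kak_{r}^{{d,0}}} = \overline{\Eh_{r}^{{d,0}}} = \Kak_{r}^{{d}} = \Eh_{r}^{{d}},
$$
which is the first statement of the corollary.

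For the equality of the border ranks $r_{\Gh}(f) = r_{\mathrm{sch}}(f) = r_{\Eh}(f)$, I would then invoke the definitions: each of these ranks is defined as the smallest integer $r$ such that $[f]$ lies in the corresponding closed set ($\Gh_{r}^{d}$, $\Kak_{r}^{d}$, or $\Eh_{r}^{d}$). Since we have just shown that these three sets coincide for every $r$ and $d$, the three minima must agree for any $f \in S^{d}$. There is no real obstacle here; the content is entirely in Theorem \ref{prop:direct}, and the corollary is a formal consequence obtained by closing up the strata and reading off the resulting minimal-rank indices.
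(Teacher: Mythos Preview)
Your proposal is correct and is exactly the intended argument: the paper states Corollary~\ref{cor} without proof precisely because it follows immediately from Theorem~\ref{prop:direct} by taking Zariski closures, using that $\Gh_{r}^{d}$, $\Kak_{r}^{d}$, and $\Eh_{r}^{d}$ are by definition the closures of $\Gh_{r}^{d,0}$, $\Kak_{r}^{d,0}$, and $\Eh_{r}^{d,0}$ respectively. The equality of the border ranks then follows straight from their definitions as minima over $r$ of membership in these identical closed sets.
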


\begin{corollary}\label{hierarchy} For any homogeneous polynomial $f$ we have that:
$$
r_H(f) \leq r_{\Gh}(f) = r_{\mathrm{sch}}(f) = r_\Eh(f)  \leq $$
$$  \leq r_{\Gh^0}(f) = r_{\mathrm{sch^0}}(f) = r_{\Eh^0}(f)   \leq r(f).
$$
\end{corollary}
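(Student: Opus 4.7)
The corollary is essentially a bookkeeping consequence of Theorem~\ref{prop:direct}, Corollary~\ref{cor}, and Corollary~\ref{Kim:rank}. My plan is to extract the two equality chains from those statements and then verify three honest inequalities: the leftmost $r_H(f) \le r_{\Gh}(f)$, the middle ``border $\le$ non-border'' bound, and the rightmost $r_{\Eh^0}(f) \le r(f)$.

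The equality $r_{\Gh^0}(f) = r_{\mathrm{sch^0}}(f) = r_{\Eh^0}(f)$ is just Theorem~\ref{prop:direct}, and its border version $r_{\Gh}(f) = r_{\mathrm{sch}}(f) = r_{\Eh}(f)$ is Corollary~\ref{cor}. For the middle inequality I would simply note that, by definition, $\Gh_r^{d}$ is the Zariski closure of $\Gh_r^{d,0}$, so $\Gh_r^{d,0} \subset \Gh_r^{d}$; hence any $r$ witnessing $[f] \in \Gh_r^{d,0}$ also witnesses $[f] \in \Gh_r^{d}$, giving $r_{\Gh}(f) \le r_{\Gh^0}(f)$. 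The same argument applies to the scheme-length and extension-rank versions, so in view of the two equality chains it suffices to check any one of them.

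For the leftmost inequality I would rewrite $r_H(f) \le r_{\Gh}(f)$ as $r_H(f) \le r_{\mathrm{sch}}(f)$ using the border equalities, and then invoke Corollary~\ref{Kim:rank} directly. For the rightmost inequality I would use the non-border equality $r_{\Eh^0}(f) = r_{\mathrm{sch^0}}(f)$ and establish $r_{\mathrm{sch^0}}(f) \le r(f)$ as follows: if $r(f) = r$, then $f = \kb_1^d + \cdots + \kb_r^d$ for some non-zero $\kb_i \in S^1$, and Proposition~\ref{prop:reform} produces a saturated zero-dimensional ideal $I \subset S$ of degree $s \le r$ defining simple points with $I \subset (f^{\bot})$; in particular $I \in \Hilbs$ and $\apolar{I^d}{f}=0$, so by Definition~\ref{def:cactus} we have $[f]\in \Kak_r^{d,0}$, whence $r_{\mathrm{sch^0}}(f) \le r(f)$. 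Every ingredient is already in hand, so I do not anticipate any genuine obstacle; the only care to exercise is tracking which of the three equivalent notions (generalized rank, scheme length, extension rank) is most convenient for invoking each previously proved step.
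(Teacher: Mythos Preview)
Your proposal is correct and follows essentially the same route as the paper: both invoke Theorem~\ref{prop:direct} and Corollary~\ref{cor} for the two equality chains, Corollary~\ref{Kim:rank} for the leftmost inequality, and the trivial containment $\Gh_r^{d,0}\subset\Gh_r^{d}$ for the middle inequality. If anything, your write-up is slightly more complete than the paper's, since you explicitly justify the rightmost bound $r_{\mathrm{sch^0}}(f)\le r(f)$ via Proposition~\ref{prop:reform}, whereas the paper's proof leaves that step implicit.
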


\begin{proof} Form Corollary \ref{Kim:rank} we get that  $r_H(f)\leq r_{\mathrm{sch}}(f)\leq r_{\mathrm{sch^0}}(f)$. By definitions of $\Gh_{r}^{{d}}$, $ \Kak_{r}^{{d}}$ and $\Eh_{r}^{{d}}$ we obviously have that $r_{\Gh}(f) \leq r_{\Gh^0}(f)$, $r_{\mathrm{sch}}(f) \leq r_{\mathrm{sch}^0}(f)$, $ r_\Eh(f)\leq r_{\Eh^0}(f)$. Finally Theorem \ref{prop:direct} and Corollary \ref{cor} end the proof.
\end{proof}

\begin{remark} Obviously $\Smooth_{r}^{{d}}\subset \Kak_{r}^{{d}}$ and $\Smooth_{r}^{{d,0}}\subset \Kak_{r}^{d,0}$. This justify $
r_H(f) \leq r_{\Gh}(f) = r_{\mathrm{sch}}(f) = r_\Eh(f)  \leq r_{\mathrm{smooth}}(f)$ and $
 r_{\Gh^0}(f) = r_{\mathrm{sch^0}}(f) = r_{\Eh^0}(f)  \leq
 r_{\mathrm{smooth}^0}(f)$. Now, in order to complete table
 (\ref{table}), it is sufficient to use Remarks \ref{smooth:border} and \ref{smooth=border}.
\end{remark}

\begin{remark} At the beginning of the proof of Theorem \ref{thm:cactus.catalecticant}, we showed that  if $d\geq 2r$ and $r\leq i \leq d-r$, then $
\Gh_{r}^{{d,0}} =\Gh_{r}^{{d}}$, $ \Kak_{r}^{{d,0}} = \Kak_{r}^{{d}}$ and $ \Eh_{r}^{{d,0}} = \Eh_{r}^{{d}}.
$ Observe that in Example \ref{Jarek} the condition $d\geq 2r$ is not satisfied. Since, by Corollary \ref{hierarchy}, we have that $ r_{\Gh^0}(f) = r_{\mathrm{sch^0}}(f)$, then Example \ref{Jarek} gives also an example of a polynomial $f$ such that $r_{\Gh}(f)= r_{\mathrm{sch}}(f) = r_{\Eh}(f) < r_{\Gh^0}(f) = r_{\mathrm{sch^0}}(f)  =r_{\Eh^0}(f) $.
\end{remark}

\def\cut{

\section{Rank hierarchy}\label{ranksection}
In this section, we fix $d, r\in \NN$ and let $m=\max (r,d)$.

We are going to study the relation between $\sigma^{d,0}_{r}$, 
$\sigma^{d}_{r}$, $\Gh^{{d}}_{r}=\Kak_{r}^{{d}}=\Eh_{r}^{{d}}$ and
$\Gamma^{i,d-i}$ for $0\le r , i\le d$.
The main objective of this section is to show that
$\Gh^{{d}}_{r}=\Kak_{r}^{{d}}=\Eh_{r}^{{d}}$ is a closed algebraic
variety of $\PP (S^{d})$. 
These relations are obtained by analysing some properties of the punctual
Hilbert scheme of $r$ points.

We recall that $\Hilbr$ corresponds to the set of zero-dimensional
schemes of $r$ points in $\PP^{n}$. Equivalently, it corresponds to the
set of zero-dimensional saturated ideals of $S$, defining $r$ points
counted with multiplicity.
It is a closed (projective) subvariety of the Grassmannian
$\Gr_{(S^m)^{*}}^r(\KK)$ of $\PP((S^m)^{*})$ 
defined by quadratic equations \cite{alonso:2009:inria-00433127:2}. 
This correspondance between zero-dimensional saturated ideals
$I\subset S$ defining $r$ points and elements in $\Hilbr$
is obtained by taking the component $(I^{m})^{\bot}$ which is a linear
space of dimension $r$ corresponding to an element of
$\Gr_{(S^m)^{*}}^r(\KK)$ of $\PP((S^m)^{*})$.

We define the following incidence variety:
$$
W^{{d}}_{r}:= \{ ([f] , I) \in \PP(S^d) \times \Hilbr \mid \ f^* \in (I^{d})^{\bot}\}.
$$
\begin{proposition}\label{prop:incidence}
The set $\overline{W^{{d}}_{r}} = W^{{d}}_{r}$ is closed for the
Zariski topology.
\end{proposition}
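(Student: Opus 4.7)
My plan is to realize $W^{d}_{r}$ as the vanishing locus of a morphism of coherent sheaves on $\PP(S^d) \times \Hilbr$, which is automatically Zariski closed. First I would invoke the embedding $\Hilbr \hookrightarrow \Gr_{(S^m)^{*}}^r(\KK)$ via $I \mapsto V_I := (I^m)^{\bot}$ recalled at the start of the section, together with the tautological rank-$r$ subbundle $\mathcal{V} \subset (S^m)^{*} \otimes \mathcal{O}_{\Gr}$ whose fibre at $I$ is precisely $V_I$.

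The key step is the identity
$$(I^d)^{\bot} = S^{m-d}\cdot (I^m)^{\bot} \subseteq (S^d)^{*}$$
valid for every $I \in \Hilbr$, with $m=\max(r,d)$; this translates the incidence condition into one involving $\mathcal{V}$ only. The inclusion $\supseteq$ is immediate from the fact that $I$ is an ideal: for $\Lambda\in (I^m)^{\bot}$ and $q\in S^{m-d}$, $(q\cdot \Lambda)(p) = \Lambda(qp) = 0$ for every $p\in I^d$. For $\subseteq$ I would dualize and prove the equivalent statement $I^d = (I^m : S^{m-d})^d$: if $p\in S^d$ satisfies $p\cdot S^{m-d}\subseteq I^m$, then $p\cdot \mathfrak{m}^{m-d}\subseteq I$, so saturation of $I$ (which holds because $I \in \Hilbr$) forces $p\in I$. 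Finite-dimensional double-duality then yields the equality, and the condition $f^{*}\in (I^d)^{\bot}$ becomes $f^{*}\in \mathrm{Im}(\mu_I)$, where
$$\mu : S^{m-d}\otimes \mathcal{V} \longrightarrow (S^d)^{*}\otimes \mathcal{O}_{\Hilbr}$$
is the global multiplication morphism of coherent sheaves.

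Finally, let $\mathcal{C} := \mathrm{coker}(\mu)$, a coherent sheaf on $\Hilbr$ whose fibre at $I$ is $(S^d)^{*}/(I^d)^{\bot}$. On $\PP(S^d)\times \Hilbr$, the tautological inclusion $\mathcal{O}_{\PP(S^d)}(-1)\hookrightarrow S^d\otimes \mathcal{O}$ composed with the nondegenerate apolar identification $S^d\cong (S^d)^{*}$, $f\mapsto f^{*}$, and then with $p_2^{*}$ of the projection to $\mathcal{C}$, defines a morphism
$$\Phi : p_1^{*}\, \mathcal{O}_{\PP(S^d)}(-1) \longrightarrow p_2^{*}\, \mathcal{C}$$
whose zero locus is exactly $W^{d}_{r}$. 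Since $\Phi$ has a line bundle as source and a coherent sheaf as target, its zero locus is cut out locally by finitely many regular functions (the coordinates of the image in a local trivialisation of $\mathcal{C}$), so $W^{d}_{r}$ is Zariski closed, i.e.\ $\overline{W^{d}_{r}} = W^{d}_{r}$.

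The main technical point is the identity $(I^d)^{\bot} = S^{m-d}\cdot (I^m)^{\bot}$, where the ``saturated'' part of the definition of $\Hilbr$ enters crucially; once this is granted, closedness reduces to the standard fact that the zero locus of a morphism of coherent sheaves with line-bundle source is Zariski closed.
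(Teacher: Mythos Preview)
Your argument has a genuine gap at the very last step, and in fact the statement itself is false as written, so no correct proof exists.

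The sheaf $\mathcal{C}=\mathrm{coker}(\mu)$ is \emph{not} locally free when $d<r$: by your own identity its fibre at $I$ is $(S^{d})^{*}/(I^{d})^{\bot}$, whose dimension is $\dim S^{d}-H_{S/I}(d)$, and $H_{S/I}(d)$ genuinely varies over $\Hilbr$ for $d<r$.  Hence there is no ``local trivialisation of $\mathcal{C}$'' to speak of, and the fibrewise zero locus of a morphism $\mathcal{L}\to\mathcal{F}$ with $\mathcal{F}$ merely coherent is \emph{not} closed in general (e.g.\ the quotient map $\mathcal{O}_{\mathbb{A}^{1}}\to\mathcal{O}/(t)$ has fibrewise zero locus $\mathbb{A}^{1}\setminus\{0\}$).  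So the final sentence of your argument does not go through.

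Concretely, take $n=2$, $r=3$, $d=1$, and let $I_{t}$ be the saturated ideal of the three points $(1{:}0{:}0)$, $(0{:}1{:}0)$, $(1{:}1{:}t)$ in $\PP^{2}$.  For $t\neq 0$ the points are non--collinear, so $I_{t}^{1}=0$ and $(I_{t}^{1})^{\bot}=(S^{1})^{*}$; for $t=0$ they lie on $\{x_{2}=0\}$, so $I_{0}^{1}=\langle x_{2}\rangle$ and $x_{2}^{*}\notin (I_{0}^{1})^{\bot}$.  Thus $([x_{2}],I_{t})\in W^{1}_{3}$ for every $t\neq 0$ while $([x_{2}],I_{0})\notin W^{1}_{3}$, so $W^{1}_{3}$ is not Zariski closed.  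In your sheaf picture this is exactly the phenomenon above: $\mathcal{C}$ has fibre $0$ over $I_{t}$ for $t\neq 0$ and fibre $\kk$ over $I_{0}$.

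For what it is worth, the paper's own proof (which sits in a section that is actually commented out of the final version via a \verb|\def\cut{...}|) handles $d\ge r$ by a clean Pl\"ucker argument---there the fibres $(I^{d})^{\bot}$ all have dimension $r$, so the tautological bundle suffices and your approach and theirs essentially coincide.  For $d<r$ the paper attempts to lift a curve in $W^{d}_{r}$ to $W^{2r-1}_{r}$ via $\tilde f^{*}\mapsto \ub^{2r-1-d}\cdot\tilde f^{*}$ and invoke the closed case; but that projection is not everywhere defined (it can send the limit point to $0$), and the same counterexample above shows the argument cannot be repaired.  The upshot is that the proposition is simply false for $d<r$, which is presumably why the authors excised the section.
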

\begin{proof}
Consider first the case $d\ge r$. Then we have $\dim (I^{d})^{\bot}=r$.
To express that $f^{*} \in (I^{{d}})^{\bot}$, we set $f^{*} \wedge
\Delta=0$ where $\Delta \in \wedge^{r} (S^{d})^*$ is representing $(I^{d})^{\bot}$.
These equations involve the coordinates of $f^{*}$ in a basis of
$(S^{d})^{*}$ and the Pl\"ucker coordinates of $\Delta$.
They defined the closed subvariety $W_{r}^{d}$ of $\PP(S^d) \times \Hilbr$.

Suppose now that $d <r$ (so that $m = max(r,d) = r$ and $\nu = 2r-1$). 
Let $([f],I)\in \PP (S^{d})\times \Hilbr$ and let $C^{0} \subset W_{r}^{d}$ be
a quasi-projective curve such that $([f],I) \in C :=\overline{C^{0}}$.
We choose $[\ub] \in \PP (S^{1})$ such that $\ub$ is  a non-zero
divisor for $I$  and for all the ideals $J$ in a
neighborhood $U$ of $I\in \Hilbr$.
By Lemma \ref{lem:KinH}, for any $([g],J) \in C^{0}\cap \PP (S^{d}) \times U$, there exists
$\tilde{g}\in \PP (S^{\nu})$ such that 
\begin{itemize}
\item $\ub^{\nu-d}\cdot \tilde{g}^{*}= g^{*}$,
\item $\tilde{g}$ is apolar to $J$, i.e. $([\tilde{g}], J) \in W_{r}^{\nu}$.
\end{itemize}
Let us denote by $\tilde{C}^{0} \subset W_{r}^{\nu}$ the lifted curve
and by $\tilde{C}\subset \PP (S^{\nu})\times \Hilbr$ its closure.
For all $([\tilde{g}], J) \in \tilde{C}^{0}\cap \PP (S^{\nu}) \times U$, we have
$\ub^{\nu-d}\cdot \tilde{g}^{*}= g^{*}$ with $([g], J)\in W_{r}^{d}$.

As $\nu=2\, r-1> r$, the previous analysis shows that $W_{r}^{\nu}$ is closed so
that $\tilde{C}\subset W_{r}^{\nu}$. Let $([\tilde{f}], I) \in
\tilde{C}$ so that we have by continuity $\ub^{\nu-d}\cdot\tilde{f}^{*}= f^{*}$.

As $\tilde{f}^{*}$ is apolar to $I^{\nu}$, we also have 
$\ub^{\nu-d}\cdot\tilde{f}^{*}= f^{*}$ apolar to $I^{d}= (I^{\nu}: \ub^{\nu-d})$, so that 
$([f],I)\in W_{r}^{d}$. 
This shows that $\overline{W^{{d}}_{r}} = W^{{d}}_{r}$.
\end{proof}

\begin{theorem}\label{prop:ferme} 
Given two integers $r$ and $d$, the set
$\Gh^{{d}}_{r}=\Kak_{r}^{{d}}=\Eh_{r}^{{d}}$ is closed for the Zariski
topology. 
\end{theorem}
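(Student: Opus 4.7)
The plan is to realize the common set $\Kak_r^d$ as the image of a proper projection from the closed incidence variety $W_r^d$ of Proposition \ref{prop:incidence}, so that its closedness follows from properness of the projection together with the projectivity of $\Hilbr$. Concretely, Proposition \ref{prop:incidence} gives that $W_r^d \subset \PP(S^d)\times\Hilbr$ is Zariski-closed. Since $\Hilbr$ embeds as a closed subvariety of the Grassmannian of $r$-dimensional subspaces of $(S^m)^*$, it is projective, so the first projection $\pi_1\colon \PP(S^d)\times\Hilbr \to \PP(S^d)$ is a closed map; hence $\pi_1(W_r^d)$ is Zariski-closed in $\PP(S^d)$.

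Next I would identify $\pi_1(W_r^d)$ with $\Kak_r^{d,0}$. By Lemma \ref{lem:apolar:saturated}, a pair $([f], I) \in \PP(S^d)\times\Hilbr$ lies in $W_r^d$ iff $I \subset (f^\bot)$, so the inclusion $\pi_1(W_r^d) \subset \Kak_r^{d,0}$ is immediate. For the reverse inclusion, given a saturated $I \in \Hilbs$ with $s \le r$ and $I\subset (f^\bot)$, I would pad $I$ by intersecting with the reduced ideal of $r - s$ generic points chosen outside $V(I)$: the resulting ideal $J$ is saturated, zero-dimensional of length exactly $r$, and still contained in $(f^\bot)$, so $([f], J) \in W_r^d$. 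Combining this with Theorem \ref{prop:direct} yields $\pi_1(W_r^d) = \Kak_r^{d,0} = \Gh_r^{d,0} = \Eh_r^{d,0}$, and closedness of the left-hand side lets me conclude $\Gh_r^d = \Kak_r^d = \Eh_r^d = \pi_1(W_r^d)$ is closed in $\PP(S^d)$.

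The substantive work is already packaged inside Proposition \ref{prop:incidence}: proving that $W_r^d$ is closed when $d < r$, where the fiber dimension $\dim(I^d)^\bot = H_{S/I}(d)$ is no longer constantly $r$ and can drop on special $I\in\Hilbr$, so $W_r^d$ is not a Plücker-type linear bundle over $\Hilbr$. That obstacle is circumvented via the curve criterion: a putative limit curve in $W_r^d$ is lifted to $W_r^\nu$ with $\nu = 2r-1$, where the incidence variety is genuinely a linear bundle, using the flat-extension construction of Lemma \ref{lem:KinH} applied with a non-zero-divisor $\ub \in S^1$ for the limit ideal, and then descended back to degree $d$ by multiplication by $\ub^{\nu-d}$. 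Once that closedness is in hand, the remainder of the present theorem is a routine projectivity-plus-padding step.
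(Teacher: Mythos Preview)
Your proof is correct and follows essentially the same approach as the paper: both invoke Proposition~\ref{prop:incidence} to get closedness of $W_r^d$, then use projectivity of $\Hilbr$ so that the first projection $\pi_1$ is a closed map. Your padding argument (intersecting $I$ with the ideal of $r-s$ generic extra points to upgrade length $\le r$ to length exactly $r$) is a detail the paper glosses over; it is correct since $J=I\cap I'\subset I\subset(f^\bot)$, and it makes explicit the identification $\pi_1(W_r^d)=\Kak_r^{d,0}$ that the paper asserts ``by definition''.
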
 
\begin{proof} 
By definition
$$
\tilde\Gh^{{d}}_{s}=\{ [f] \in \PP (S^d) \mid \ \exists I \in \Hilbs, \ f^* \in (I^{{d}})^\bot \}
$$  
is the image of the projection $\pi_{1}$ on the first factor
$\PP(S^d)$  of $W^{{d}}_{r}$.

By Proposition \ref{prop:incidence}, $W^{{d}}_{r}=
\overline{W^{{d}}_{r}}$ is closed and so is 
$\Gh^{{d}}_{r}$ by \cite[Theorem 3]{MR1328833}.
\end{proof}

\begin{theorem}\label{prop:secante.cactus.catalecticant}
Given an integer $r$, we have 
\begin{equation} \label{eq:1}
\sigma_{r}^{0,d} \subset \sigma_{r}^{{d}} =\Smooth_{r}^{d} \subset 
\Gh^{{d}}_{r}=\Kak_{r}^{{d}}=\Eh_{r}^{{d}}
 \subset \Gamma^{i,d-i}_{r}
\end{equation}
for all $0 \leq i \leq d$.
\end{theorem}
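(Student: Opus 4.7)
The plan is to assemble the chain of inclusions by synthesizing results already established in the excerpt; no fundamentally new argument is required. First, $\sigma_r^{0,d}\subset \sigma_r^d$ is immediate from the definition of $\sigma_r^d$ as the Zariski closure of $\sigma_r^{0,d}$. For the equality $\sigma_r^d=\Smooth_r^d$ I would appeal to Remarks \ref{smooth:border} and \ref{smooth=border}: one inclusion uses \cite{ia-book-1999}, Lemma 5.17 (namely $\Smooth_r^{d,0}\subset \sigma_r^d$, which upon Zariski closure gives $\Smooth_r^d\subset \sigma_r^d$), while the reverse inclusion follows because any decomposition of $f$ of size $s\le r$ produces an ideal of $s$ distinct simple points, which is reduced and hence trivially smoothable, so $\sigma_r^{0,d}\subset \Smooth_r^{d,0}$; closing finishes this direction.

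For $\Smooth_r^d\subset \Gh_r^d$, I would note that $\Hilbredr\subset \Hilbr$, so a direct comparison of Definitions \ref{def:smoothable} and \ref{def:cactus} gives $\Smooth_r^{d,0}\subset \Kak_r^{d,0}$. Combining this with Theorem \ref{prop:direct}, which provides $\Kak_r^{d,0}=\Gh_r^{d,0}$, and then passing to Zariski closure, yields $\Smooth_r^d\subset \Gh_r^d$. The triple equality $\Gh_r^d=\Kak_r^d=\Eh_r^d$ is exactly Corollary \ref{cor}.

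The rightmost inclusion $\Gh_r^d\subset \Gamma_r^{i,d-i}$, valid for every $0\le i\le d$, reduces to Lemma \ref{lem:KinGamma}, which establishes $\Kak_r^{d,0}\subset \Gamma_r^{i,d-i}$. Since $\Gamma_r^{i,d-i}$ is a closed subvariety of $\PP(S^d)$, cut out by the $(r+1)\times(r+1)$ minors of a catalecticant matrix, taking Zariski closures yields $\Kak_r^d\subset \Gamma_r^{i,d-i}$, and Corollary \ref{cor} lets me identify $\Kak_r^d$ with $\Gh_r^d$.

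The main difficulty is essentially bookkeeping: the whole theorem is a repackaging of Theorem \ref{prop:direct}, transported from the ``open'' level $\Gh_r^{d,0}=\Kak_r^{d,0}=\Eh_r^{d,0}$ to the ``closed'' level by routine topological arguments, and flanked on the right by Lemma \ref{lem:KinGamma} and on the left by Remarks \ref{smooth:border}--\ref{smooth=border}. The only point that warrants minor care is that Zariski closure preserves inclusions into a closed ambient set, which is standard.
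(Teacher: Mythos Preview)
Your argument is correct, but it proceeds along a different line than the paper's own proof. You assemble the chain by quoting Remarks \ref{smooth:border} and \ref{smooth=border} for $\sigma_r^d=\Smooth_r^d$, and then deduce $\Smooth_r^d\subset \Gh_r^d$ and $\Gh_r^d\subset \Gamma_r^{i,d-i}$ by closing the ``open'' inclusions $\Smooth_r^{d,0}\subset \Kak_r^{d,0}$ and $\Kak_r^{d,0}\subset \Gamma_r^{i,d-i}$ inside the closed target, invoking Theorem \ref{prop:direct} and Corollary \ref{cor} for the identifications. This is economical and uses only results from the main body.

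The paper instead works through the incidence variety $W_r^d=\{([f],I)\in \PP(S^d)\times \Hilbr : f^*\in (I^d)^\bot\}$. It first proves $W_r^d$ is closed (Proposition \ref{prop:incidence}), whence the projection $\pi_1(W_r^d)=\Kak_r^{d,0}$ is closed (Theorem \ref{prop:ferme}); this gives $\sigma_r^{0,d}\subset \Gh_r^d$ directly and, after closure, $\sigma_r^d\subset \Gh_r^d$. For the equality with $\Smooth_r^d$, the paper exploits closedness of $W_r^d$ again to compute $\overline{\pi_1(W_r^d\cap \PP(S^d)\times H^0)}=\pi_1(W_r^d\cap \PP(S^d)\times \overline{H^0})$, which yields the sharper statement $\sigma_r^d=\Smooth_r^{d,0}$ (so $\Smooth_r^{d,0}$ is itself closed), not merely $\sigma_r^d=\Smooth_r^d$. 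Thus your route is shorter and self-contained modulo the citation to \cite{ia-book-1999}, while the paper's route is more self-sufficient and extracts a stronger conclusion along the way.
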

\begin{proof}
Let $H^{0}$ be the subset of $\Hilbr$ of saturated ideals defining simple points. 
By the Apolarity Lemma \ref{prop:reform}, we have
$$
\sigma_{r}^{0,d} := \{[f] \in \PP(S^d) | \exists I \in H^{0}, \
I \subset (f^{\bot}) \}.
$$
By definition 
$$
\sigma_{r}^{0,d}=\pi_{1} (W_{r}^{d} \cap \PP (S^{d})\times  H^{0})
\subset \pi_{1} (W_{r}^{d}) = \Gh_{r}^{d},
$$
where $\pi_{1}$ is the projection from $\PP(S^d)\times \Hilbr$
onto $\PP (S^{d})$. 

By Proposition \ref{prop:ferme}, $\Gh^{{d}}_{r}$ is closed. Taking the
(Zariski) closure of the sets in the previous inclusion, 
we deduce that 
$\sigma_{r}^{0,d} \subset \sigma_{r}^{{d}} \subset \Gh^{{d}}_{r}=\Kak_{r}^{{d}}=\Eh_{r}^{{d}}$.

The inclusion $\Gh^{{d}}_{r} \subset \Gamma^{i,d-i}_{r}$ follows from Theorem \ref{prop:direct}.

To show that $\Smooth_{r}^{d}=\sigma_{r}^{d}$, we consider
the open set $H^{0}\subset \Hilbr$ of saturated ideals of $\Hilbr$
defining simple points. Then $H=\overline{H^{0}}=\Hilbredr$ corresponds to the subset of 
smoothable schemes in $\Hilbr$.
As $W_{r}^{d}$ is closed (Proposition \ref{prop:incidence}) and as
$$
\sigma_{r}^{0,d}=\pi_{1} (W_{r}^{d} \cap \PP (S^{d})\times  H^{0})
$$ 
we have
$$
\sigma_{r}^{d} = \overline{\sigma_{r}^{0,d}} 
=  \pi_{1} (\overline{W^{d}_{r} \cap  \PP (S^{d})\times  H^{0}}) 
=  \pi_{1} ({W^{d}_{r} \cap  \PP (S^{d})\times  H}).
$$
This shows that $f\in \sigma^{d}_{r}$ iff there exists $I\in H$ (i.e. $I$
is smoothable) such that $f$ is apolar to $I$.
\end{proof}

\begin{proposition}\label{prop:rangs}
For any $f \in S^d$, we have:
$$
r_H(f) \leq r_{\Gh}(f)= r_{\mathrm{sch}}(f) = r_\Eh(f)  \leq
r_\sigma(f) =\rsmooth (f) \leq r(f).
$$
\end{proposition}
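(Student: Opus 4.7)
The plan is to establish the theorem by a cyclic chain of inclusions
$\Gh_{r}^{{d,0}} \subset \Kak_{r}^{{d,0}} \subset \Eh_{r}^{{d,0}} \subset \Gh_{r}^{{d,0}}$,
of which the first two are already available: Lemma \ref{lem:GinK} gives the inclusion of the generalized-rank set in the scheme-length set, and Lemma \ref{lem:KinH} gives the inclusion of the scheme-length set in the flat-extension set. So the whole theorem reduces to the remaining inclusion $\Eh_{r}^{{d,0}} \subset \Gh_{r}^{{d,0}}$.

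To prove this, I would start with $[f] \in \Eh_{r}^{{d,0}}$ and unpack the definition: there exist $\ub \in S^{1}\setminus\{0\}$ and a flat extension $[\tilde f] \in \Gamma_{r}^{m,m'}$ with $\ub^{m+m'-d}\cdot \tilde f^{*} = f^{*}$, where $m, m'$ are chosen so that $m, m' \geq r$ and $m+m' \geq 2r$. Since $\tilde f$ is a form of degree $m+m' \geq 2r$ with $\rank H^{m,m'}_{\tilde f^{*}} \leq r$, I am exactly in the range where Theorem \ref{thm:cactus.catalecticant} applies. That theorem yields $[\tilde f] \in \Gh_{r}^{m+m',0}$, so after a suitable change of coordinates (which I can choose to make $\ub=x_{0}$), $\tilde f$ admits a generalized affine decomposition
\[
\underline{\tilde f}^{*} = \sum_{i=1}^{s} \1_{\zeta_{i}} \circ \phi_{i}(\partial), \qquad s \leq r,
\]
whose associated inverse system in $R^{*}$ has dimension $\leq r$.

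The last step is to transport this decomposition back to $f$. Since $f^{*} = \ub^{m+m'-d}\cdot \tilde f^{*}$ in $S^{*}$, after setting $x_{0}=1$ I get $\underline{f}^{*} = \underline{\ub}^{m+m'-d}\cdot \underline{\tilde f}^{*}$ in $(R^{\leq d})^{*}$. Applying the $R$-module action term by term,
\[
\underline{f}^{*} = \sum_{i=1}^{s} \underline{\ub}^{m+m'-d}\cdot \bigl(\1_{\zeta_{i}} \circ \phi_{i}(\partial)\bigr) = \sum_{i=1}^{s} \1_{\zeta_{i}} \circ \phi'_{i}(\partial),
\]
where each $\phi'_{i}$ is obtained from $\phi_{i}$ by applying the differential operator corresponding to $\underline{\ub}^{m+m'-d}$. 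The crucial observation, which closes the argument, is that such a derivation can only shrink (never enlarge) the inverse system generated by each summand; hence the total dimension of the inverse system generated by $\underline{f}^{*}$ remains $\leq r$, giving $[f] \in \Gh_{r}^{{d,0}}$.

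The main obstacle is the final step: cleanly relating the $S$-action ``multiply by $\ub$'' on $S^{*}$ to the $R$-action ``derive with respect to the variable dual to $\underline{\ub}$'' on $R^{*}$, and verifying that this action does not inflate the inverse system. One must check that the inverse system generated by $1_{\zeta_{i}}\circ \phi'_{i}(\partial)$ sits inside the one generated by $1_{\zeta_{i}}\circ \phi_{i}(\partial)$, which follows formally from the $R$-module description of inverse systems but deserves to be stated explicitly. Everything else is bookkeeping (keeping track of the degrees $m, m'$ and confirming the hypothesis $m+m'\geq 2r$ of Theorem \ref{thm:cactus.catalecticant} is met) and the straightforward assembly of the three inclusions into the claimed equality of sets and of ranks.
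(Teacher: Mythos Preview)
Your argument is a clean and correct proof of Theorem~\ref{prop:direct}, i.e.\ of the equality $\Gh_{r}^{d,0}=\Kak_{r}^{d,0}=\Eh_{r}^{d,0}$, and indeed it is essentially the paper's own proof of that theorem. But Proposition~\ref{prop:rangs} asserts strictly more, and those extra parts are not addressed anywhere in your proposal.

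Concretely, the proposition states
\[
r_H(f)\ \le\ r_{\Gh}(f)=r_{\mathrm{sch}}(f)=r_{\Eh}(f)\ \le\ r_{\sigma}(f)=r_{\mathrm{smooth}}(f)\ \le\ r(f).
\]
Your cyclic inclusion gives (after taking Zariski closures, which you also did not mention) only the middle block of equalities. You still owe: (i) $r_H(f)\le r_{\Gh}(f)$, which in the paper comes from $\Kak_{r}^{d,0}\subset\Gamma_{r}^{i,d-i}$ (Lemma~\ref{lem:KinGamma}) and passage to the closure; (ii) $r_{\Eh}(f)\le r_{\sigma}(f)$, which requires the nontrivial fact that $\Gh_{r}^{d}=\Kak_{r}^{d}=\Eh_{r}^{d}$ is Zariski \emph{closed} (Proposition~\ref{prop:ferme}, via the incidence variety $W_{r}^{d}$), so that $\sigma_{r}^{0,d}\subset\Gh_{r}^{d,0}$ implies $\sigma_{r}^{d}\subset\Gh_{r}^{d}$; (iii) $r_{\sigma}(f)=r_{\mathrm{smooth}}(f)$, which the paper obtains by identifying $\sigma_{r}^{d}$ with the projection of $W_{r}^{d}\cap(\PP(S^{d})\times\Hilbredr)$; and (iv) the trivial $r_{\mathrm{smooth}}(f)\le r(f)$. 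In the paper, all of this is packaged into Theorem~\ref{prop:secante.cactus.catalecticant}, from which Proposition~\ref{prop:rangs} follows in one line. The genuine gap in your write-up is therefore not in what you did prove, but in mistaking the target: you reproduced the proof of Theorem~\ref{prop:direct} and stopped, leaving the inequalities involving $r_H$, $r_{\sigma}$, $r_{\mathrm{smooth}}$ and $r$ untouched.
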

\begin{proof}
It is a direct consequence of Theorem \ref{prop:secante.cactus.catalecticant}.
\end{proof}

A corollary of  Theorem \ref{prop:secante.cactus.catalecticant} is
the following: for all $f\in S^{{d}}$, $f\in \sigma_{r}^{{d}}$ iff
there exists a smoothable scheme $I\in \Hilbr$ such that $I\subset (f^{\bot})$.

\begin{proposition}\label{prop:all:smoothable} 
If all Gorenstein schemes of length $r$ in $\PP^{n}$ are smoothable, then 
$$ 
\sigma_{r}^{{d}}= \Gh^{{d}}_{r}.
$$
\end{proposition}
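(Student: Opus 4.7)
The inclusion $\sigma_r^d \subset \Gh_r^d$ is immediate from Corollary~\ref{hierarchy} combined with Remark~\ref{smooth=border} (which gives $r_{\Gh}(f) \le r_{\mathrm{smooth}}(f) = r_\sigma(f)$), so the content of the proposition is the reverse inclusion $\Gh_r^d \subset \sigma_r^d$. Since both sets are closed and $\Gh_r^d$ is the Zariski closure of $\Gh_r^{d,0}$, it suffices to prove $\Gh_r^{d,0} \subset \sigma_r^d$. Using the equality $\Gh_r^{d,0} = \Eh_r^{d,0}$ of Theorem~\ref{prop:direct}, I may start with $[f]\in \Eh_r^{d,0}$ admitting a flat extension of rank $s \le r$.

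After a generic change of coordinates (cf. Lemma~\ref{cor:jerome}) one may assume $\ub = x_0$, so that $\underline{f}^*\in (R^{\le d})^*$ extends, by Theorem~\ref{thm:decomp.generalise.extension}, to a linear form $\Lambda\in R^*$ whose Hankel operator $H_\Lambda$ has rank exactly $s$. The kernel $J := \ker H_\Lambda$ is then an ideal of $R$ with $A := R/J$ Artinian of length $s$; since $J$ is the annihilator of the single generator $\Lambda$ of the principal $R$-submodule $R\cdot \Lambda \subset R^*$, Macaulay duality in the affine Artinian setting implies that $A$ is Gorenstein. Homogenizing, the saturated ideal $I := J^h \subset S$ cuts out a zero-dimensional \emph{Gorenstein} subscheme $Z\subset \PP^n$ of length $s \le r$, with $x_0$ a non-zero divisor on $S/I$, and by construction $I\subset (f^{\bot})$.

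By the smoothability hypothesis, $Z$ lies in $\Hilbreds$: there is a flat family $\{Z_t\}$ in $\Hilbs$ over a smooth pointed curve $(T,0)$ with $Z_0 = Z$ and $Z_t$ a disjoint union of $s$ simple points for $t\ne 0$. Over a neighborhood of $0$ the Hilbert function $H_{S/I_{Z_t}}(d)$ is constant across the family (the Hilbert polynomial is uniformly $s$), so the spaces $(I_{Z_t}^{d})^{\bot}$ assemble into a locally free subsheaf of the constant sheaf with fibre $(S^d)^*$, and the section $f^*\in (I_{Z_0}^{d})^{\bot}$ extends to a section $f_t^*$ whose associated class $[f_t]$ lies in $\sigma_s^{0,d}\subset \sigma_r^{0,d}$ for $t\ne 0$. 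Letting $t\to 0$ gives $[f]\in \overline{\sigma_r^{0,d}} = \sigma_r^d$.

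The crucial, and only genuinely delicate, step is the deformation of the apolarity condition along the smoothing $\{Z_t\}$ in the last paragraph. For $d\ge s$ this is automatic from the constancy of the Hilbert polynomial, but for small $d$ one must check that the rank of the family $(I_{Z_t}^{d})^{\bot}$ does not drop at $t=0$. A robust way to formalize the argument is via the incidence variety $W_r^d\subset \PP(S^d)\times \Hilbs$ of apolar pairs: its closedness, together with the inclusion (provided by the hypothesis) of the Gorenstein locus of $\Hilbs$ inside $\Hilbreds$, forces $\pi_1(W_r^d) \subset \pi_1\bigl(W_r^d \cap (\PP(S^d)\times \Hilbreds)\bigr) = \Smooth_r^d = \sigma_r^d$, yielding $\Gh_r^d = \Kak_r^d \subset \sigma_r^d$ as desired.
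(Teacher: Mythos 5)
Your argument is correct in substance, but it takes a genuinely different route from the paper. The paper disposes of the reverse inclusion in one line by citing Buczy\'nska--Buczy\'nski [Thm.~1.6(i)], which states precisely that the smoothability hypothesis forces $\sigma_r^d=\Kak_r^d$, and then invokes $\Kak_r^d=\Gh_r^d$ from Theorem~\ref{prop:direct}/Corollary~\ref{cor}. You instead re-derive the cited result from the paper's own machinery: the flat-extension construction produces, for any $[f]\in\Gh_r^{d,0}=\Kak_r^{d,0}=\Eh_r^{d,0}$, an apolar scheme that is moreover \emph{Gorenstein} (since $\ker H_\Lambda$ is the annihilator of the principal inverse system $R\cdot\Lambda$, cf.\ Proposition~\ref{prop:mult:dual}), so the hypothesis applies and $[f]\in\Smooth_r^{d,0}\subset\sigma_r^d$ by Remark~\ref{smooth:border}; taking closures finishes the proof. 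This buys a self-contained argument at the cost of length, and it makes visible the key point --- that the apolar scheme can always be chosen Gorenstein --- which is exactly what makes the hypothesis relevant. Two loose ends to tighten: (i) your scheme has length $s\le r$ while the hypothesis concerns length exactly $r$, so you should add $r-s$ disjoint reduced points (the union is still Gorenstein, and a smoothing of the union restricts to one of $Z$); (ii) the ``locally free subsheaf'' claim in your third paragraph is not literally true for small $d$ (the dimension of $(I_{Z_t}^d)^{\bot}$ can drop at $t=0$), and your incidence-variety fix only works once you note, as in your second paragraph, that the apolar scheme may be \emph{replaced} by a Gorenstein one --- the inclusion $\pi_1(W_r^d)\subset\pi_1\bigl(W_r^d\cap(\PP(S^d)\times\Hilbreds)\bigr)$ is false fibrewise, since not every apolar scheme is Gorenstein. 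The cleanest finish is simply to quote $\Smooth_r^{d,0}\subset\sigma_r^d$ (Remark~\ref{smooth:border}) rather than redo the semicontinuity argument.
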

\begin{proof} The inclusion $ 
\sigma_{r}^{{d}} \subset \Gh^{{d}}_{r}
$ follows from  Proposition
\ref{prop:secante.cactus.catalecticant}. In \cite{Buczynska:2010kx}[Thm. 1.6 (i)] it is proved that if all Gorenstein schemes of length $r$ in $\PP^{n}$ are smoothable, then $\sigma_r^d= \Kak_{r}^{{d}}$. Now by Theorem \ref{prop:direct} we have that $\Kak_{r}^{{d}}=\Gh^{{d}}_{r}$ and the conclusion follows.
\end{proof}

}

\paragraph{\textbf{Acknowledgement}} We thank A. Bostan for the
precise and detailed information he provided on combinatoric results
related to the estimation of the catalecticant rank. We also thank J. Buczy\'nski for many useful discussions and suggestions.


\begin{bibdiv}
\begin{biblist}

\bib{MR1455316}{article}{
	Author = {J. Alexander and A. Hirschowitz},
	Date-Added = {2011-12-07 13:45:41 +0100},
	Date-Modified = {2011-12-07 15:45:41 +0100},
	Fjournal = {Indian Academy of Sciences. Proceedings. Mathematical Sciences},
	Issn = {0253-4142},
	Journal = {Proc. Indian Acad. Sci. Math. Sci.},
	Mrclass = {14E15 (14J70)},
	Mrnumber = {1455316 (98g:14014)},
	Mrreviewer = {Tohsuke Urabe},
	Number = {2},
	Pages = {139--154},
	Title = {Generic hypersurface singularities},
	Volume = {107},
	Year = {1997},
	Bdsk-Url-1 = {http://www.ams.org/mathscinet-getitem?mr=1455316}}

	\bib{AthyiaMacdonald}{book}{
	Author = {M.F. Atiyah and I.G. Macdonald},
	Date-Added = {2011-12-15 19:28:17 +0100},
	Date-Modified = {2011-12-15 19:28:49 +0100},
	Mrclass = {13.00},
	Mrnumber = {0242802 (39 \#4129)},
	Mrreviewer = {J. A. Johnson},
	Pages = {ix+128},
	Publisher = {Addison-Wesley Publishing Co., Reading, Mass.-London-Don Mills, Ont.},
	Title = {Introduction to commutative algebra},
	Year = {1969},
	Bdsk-Url-1 = {http://www.ams.org/mathscinet-getitem?mr=0242802}}
	
	\bib{BBCM:2011:issac}{inproceedings}{
	Address = {San Jose, CA, United States},
	Author = {A. Bernardi and J. Brachat and P. Comon and B. Mourrain},
	Booktitle = {{International Symposium on Symbolic and Algebraic Computation (ISSAC)}},
	Editor = {A. Leykin},
	Pages = {35--42},
	Publisher = {ACM New York},
	Title = {{Multihomogeneous polynomial decomposition using moment matrices}},
	Year = {2011}}
	
	\bib{bbcm}{article}{
	 Title = {General Tensor Decomposition, Moment Matrices and Applications},
	 Author ={A. Bernardi and J. Brachat and P. Comon and B. Mourrain},
	 Publisher = {Academic Press, Inc.},
	 Journal = {J. Symb. Comput.},
	 Journal = {J. Symb. Comput. Special Issue: ISSAC-2011},
	 Doi= {10.1016/j.jsc.2012.05.012}
	}
	
	\bib{BGI11}{article}{
	Acmid = {1872460},
	Address = {Duluth, MN, USA},
	Author = {A. Bernardi and A. Gimigliano and M. Id\`{a}},
	Doi = {http://dx.doi.org/10.1016/j.jsc.2010.08.001},
	Issn = {0747-7171},
	Issue = {1},
	Journal = {J. Symb. Comput.},
	Keywords = {Secant variety, Symmetric tensor, Tensor rank},
	Month = {January},
	Numpages = {20},
	Pages = {34--53},
	Publisher = {Academic Press, Inc.},
	Title = {Computing symmetric rank for symmetric tensors},
	Url = {http://dx.doi.org/10.1016/j.jsc.2010.08.001},
	Volume = {46},
	Year = {2011},
	Bdsk-Url-1 = {http://dx.doi.org/10.1016/j.jsc.2010.08.001}}
	
	\bib{Bernardi:2011vn}{article}{
	Abstract = {We prove that the smallest degree of an apolar 0-dimensional scheme of a general cubic form in $n+1$ variables is at most $2n+2$, when $n\geq 8$, and therefore smaller than the rank of the form. For the general reducible cubic form the smallest degree of an apolar subscheme is $n+2$, while the rank is at least $2n$.},
	Author = {A. Bernardi and K. Ranestad},
	Date-Added = {2011-12-07 13:37:23 +0100},
	Date-Modified = {2012-09-26 16:35:11 +0200},
	Eprint = {http://dx.doi.org/10.1016/j.jsc.2012.08.001},
	Doi={10.1016/j.jsc.2012.08.001}
	Journal = {Journal of Symbolic Computation},
	Keywords = {Cactus rank, Cubic forms, Apolarity},
	Title = {On the cactus rank of cubic forms},
	Url = {DOI:10.1016/j.jsc.2012.08.001},
	Volume = {},
	Year = {2012},
	Bdsk-Url-1 = {DOI:10.1016/j.jsc.2012.08.001}}

\bib{BracCMT09:laa}{article}{
	Author = {J. Brachat and P. Comon and B. Mourrain and E. Tsigaridas},
	Journal = {{L}inear {A}lgebra and {A}pplications},
	Pages = {1851--1872},
	Publisher = {{E}lsevier - {A}cademic {P}ress},
	Title = {Symmetric Tensor Decomposition},
	Volume = {433},
	Year = {2010}}
	
	\bib{inpreparation}{manual}{
	Author = {W.~Buczy{\'n}ska and J.~Buczy{\'n}ski.},
	Date-Added = {2012-10-11 11:25:40 +0200},
	Date-Modified = {2012-10-11 11:26:44 +0200},
	Title = {In Preparation}}
	
	\bib{Buczynska:2010kx}{unpublished}{
	Abstract = {We study the secant varieties of the Veronese varieties and of Veronese reembeddings of a smooth projective variety. We give some conditions, under which these secant varieties are set-theoretically cut out by determinantal equations. More precisely, they are given by minors of a catalecticant matrix. These conditions include the case when the dimension of the projective variety is at most 3 and the degree of reembedding is sufficiently high. This gives a positive answer to a set-theoretic version of a question of Eisenbud in dimension at most 3. For dimension four and higher we produce plenty of examples when the catalecticant minors are not enough to set-theoretically define the secant varieties to high degree Veronese varieties. This is done by relating the problem to smoothability of certain zero-dimensional Gorenstein schemes.},
	Author = {W. Buczy{\'n}ska and J. Buczy{\'n}ski},
	Date-Added = {2011-12-07 13:32:22 +0100},
	Date-Modified = {2012-09-26 16:36:22 +0200},
	Eprint = {1012.3563v4},
	Month = {12},
	Note = {To appear in Journal of Algebraic Geometry, http://arxiv.org/abs/1012.3563},
	Title = {Secant varieties to high degree Veronese reembeddings, catalecticant matrices and smoothable Gorenstein schemes},
	Url = {http://arxiv.org/abs/1012.3563v4},
	Year = {2010},
	Bdsk-Url-1 = {http://arxiv.org/abs/1012.3563v4}}

	\bib{Buczynska:2012fk}{unpublished}{
	Author = {W. Buczy{\'n}ska and J. Buczy{\'n}ski and Z. Teitler},
	Date-Added = {2012-09-26 15:19:14 +0200},
	Date-Modified = {2012-09-27 08:09:50 +0200},
	Eprint = {1201.2922},
	Month = {01},
	Note = {Preprint: http://arxiv.org/abs/1201.2922},
	Title = {Waring decompositions of monomials},
	Year = {2012},
	Bdsk-Url-1 = {http://arxiv.org/abs/1201.2922}}

\bib{BurgCS97}{book}{
	Address = {Berlin},
	Author = {P. B{\"u}rgisser and M. Clausen and M.A. Shokrollahi},
	Isbn = {3-540-60582-7},
	Mrclass = {68-02 (12Y05 65Y20 68Q05 68Q15 68Q25 68Q40)},
	Mrnumber = {MR1440179 (99c:68002)},
	Mrreviewer = {Alexander I. Barvinok},
	Note = {With the collaboration of Thomas Lickteig},
	Pages = {xxiv+618},
	Publisher = {Springer-Verlag},
	Series = {Grundlehren der Mathematischen Wissenschaften [Fundamental Principles of Mathematical Sciences]},
	Title = {Algebraic complexity theory},
	Volume = {315},
	Year = {1997}}
	
	\bib{Card98:procieee}{article}{
	Author = {J. F. Cardoso},
	Journal = {Proc. of the IEEE},
	Note = {special issue, R.W. Liu and L. Tong eds},
	Numbre = {8},
	Pages = {2009--2025},
	Title = {Blind signal separation: statistical principles},
	Volume = {90},
	Year = {1998}}

\bib{Carlini:2011fk}{unpublished}{
	Author = {E. Carlini and M. V. Catalisano and A. V. Geramita},
	Date-Added = {2011-12-07 11:17:26 +0100},
	Date-Modified = {2011-12-07 15:50:34 +0100},
	Eprint = {1110.0745},
	Month = {10},
	Note = {Preprint: http://arxiv.org/abs/1110.0745},
	Title = {The Solution to Waring's Problem for Monomials},
	Url = {http://arxiv.org/abs/1110.0745},
	Year = {2011},
	Bdsk-Url-1 = {http://arxiv.org/abs/1110.0745}}

\bib{MR2966824}{article}{
	Author = {E. Carlini and M.V. Catalisano and A.V. Geramita},
	Coden = {JALGA4},
	Date-Added = {2012-09-26 16:43:19 +0200},
	Date-Modified = {2012-09-26 16:43:19 +0200},
	Doi = {10.1016/j.jalgebra.2012.07.028},
	Fjournal = {Journal of Algebra},
	Issn = {0021-8693},
	Journal = {J. Algebra},
	Mrclass = {Preliminary Data},
	Mrnumber = {2966824},
	Pages = {5--14},
	Title = {The solution to the {W}aring problem for monomials and the sum of coprime monomials},
	Url = {http://dx.doi.org/10.1016/j.jalgebra.2012.07.028},
	Volume = {370},
	Year = {2012},
	Bdsk-Url-1 = {http://www.ams.org/mathscinet-getitem?mr=2966824}}
	
	\bib{CGG}{article}{
	Author = {M.V. Catalisano and A.V. Geramita and A. Gimigliano},
	Coden = {JALGA4},
	Doi = {10.1016/j.jalgebra.2007.01.045},
	Fjournal = {Journal of Algebra},
	Issn = {0021-8693},
	Journal = {J. Algebra},
	Mrclass = {14N05 (14J26 14M20)},
	Mrnumber = {2392585 (2009g:14068)},
	Mrreviewer = {Francesco Russo},
	Number = {5},
	Pages = {1913--1931},
	Title = {On the ideals of secant varieties to certain rational varieties},
	Url = {http://dx.doi.org/10.1016/j.jalgebra.2007.01.045},
	Volume = {319},
	Year = {2008},
	Bdsk-Url-1 = {http://dx.doi.org/10.1016/j.jalgebra.2007.01.045}}

\bib{MR2754189}{article}{
	Author = {G. Comas and M. Seiguer},
	Date-Added = {2011-12-16 14:38:08 +0100},
	Date-Modified = {2011-12-16 14:38:20 +0100},
	Doi = {10.1007/s10208-010-9077-x},
	Fjournal = {Foundations of Computational Mathematics. The Journal of the Society for the Foundations of Computational Mathematics},
	Issn = {1615-3375},
	Journal = {Found. Comput. Math.},
	Mrclass = {15A03 (14N05)},
	Mrnumber = {2754189},
	Number = {1},
	Pages = {65--78},
	Title = {On the rank of a binary form},
	Url = {http://dx.doi.org/10.1007/s10208-010-9077-x},
	Volume = {11},
	Year = {2011},
	Bdsk-Url-1 = {http://www.ams.org/mathscinet-getitem?mr=2754189}}

\bib{Como92:elsevier}{incollection}{
	Address = {Amsterdam, London},
	Author = {P. Comon},
	Booktitle = {Higher Order Statistics},
	Editor = {J-L. Lacoume},
	Pages = {29--38},
	Publisher = {Elsevier},
	Title = {Independent {C}omponent {A}nalysis},
	Year = {1992}}
	
	\bib{cglm-simax-2008}{article}{
	Author = {P. Comon and G. Golub and L-H. Lim and B. Mourrain},
	Journal = {{SIAM} Journal on Matrix Analysis Appl.},
	Number = {3},
	Pages = {1254--1279},
	Title = {Symmetric Tensors and Symmetric Tensor Rank},
	Volume = {30},
	Year = {2008}}

	\bib{Comtet74}{book}{
	Address = {Dordrecht},
	Author = {L. Comtet},
	Date-Modified = {2012-01-15 18:05:07 +0100},
	Edition = {enlarged},
	Isbn = {90-277-0441-4},
	Note = {The art of finite and infinite expansions},
	Pages = {xi+343},
	Publisher = {D. Reidel Publishing Co.},
	Title = {Advanced combinatorics},
	Year = {1974}}

	\bib{dldmv}{article}{
	Author={L. De Lathauwer and B. De Moor and J. Vandewalle}
	Title = {A multilinear singular value decomposition},
	Journal = {SIAM J. Matrix Anal. Appl},
	Year = {2000},
	Volume = {21},
	Pages = {1253--1278}
	}

\bib{Eis94}{book}{
	Author = {D. Eisenbud},
	Publisher = {Berlin, Springer-Verlag},
	Series = {Graduate Texts in Math.},
	Title = {{C}ommutative {A}lgebra with a view toward {A}lgebraic {G}eometry},
	Volume = {150},
	Year = {1994}
}

\bib{EM08}{book}{
	Affiliation = {{GALAAD} - {INRIA} {S}ophia {A}ntipolis - {INRIA} - {CNRS} : {UMR}6621 - {U}niversit{\'e} de {N}ice {S}ophia-{A}ntipolis},
	Author = {M. Elkadi and B. Mourrain},
	Isbn = {978-3-540-71646-4},
	Pages = {307},
	Publisher = {Springer},
	Series = {Math\'matiques et Applications},
	Title = {Introduction {\`a} la r\'esolution des syst\`emes polynomiaux},
	Volume = {59},
	Year = {2007}}

\bib{Ems78}{article}{
	Author = {J. Emsalem},
	Journal = {Bull. Soc. Math. France},
	Pages = {399--416},
	Title = {G\'eom\'etrie des points \'epais},
	Volume = {106},
	Year = {1978}}

\bib{MR1620349}{article}{
	Author = {F. Gherardelli},
	Date-Added = {2012-10-11 11:40:13 +0200},
	Date-Modified = {2012-10-11 11:40:24 +0200},
	Fjournal = {Istituto Lombardo. Accademia di Scienze e Lettere. Rendiconti. Scienze Matematiche e Applicazioni. A},
	Issn = {0021-2504},
	Journal = {Istit. Lombardo Accad. Sci. Lett. Rend. A},
	Mrclass = {14M12},
	Mrnumber = {1620349 (99b:14052)},
	Number = {1-2},
	Pages = {163--170 (1997)},
	Title = {Observations about a class of determinantal varieties},
	Volume = {130},
	Year = {1996},
	Bdsk-Url-1 = {http://www.ams.org/mathscinet-getitem?mr=1620349}}

\bib{Gotz78}{article}{
	Author = {G. Gotzmann},
	Doi = {10.1007/BF01214566},
	Journal = {Math. Z.},
	Language = {German},
	Pages = {61--70},
	Title = {{Eine Bedingung f\"ur die Flachheit und das Hilbertpolynom eines graduierten Ringes.}},
	Volume = {158},
	Year = {1978},
	Bdsk-Url-1 = {http://dx.doi.org/10.1007/BF01214566}}

\bib{ia-book-1999}{book}{
	Author = {A. Iarrobino and V. Kanev},
	Publisher = {Springer-Verlag, Berlin},
	Series = {Lecture Notes in Computer Science},
	Title = {Power sums, Gorenstein algebras, and determinantal loci},
	Volume = {1721},
	Year = {1999}}

\bib{pre05968745}{book}{
	Author = {J.M. Landsberg},
	Date-Added = {2011-12-16 15:12:42 +0100},
	Date-Modified = {2011-12-16 15:22:14 +0100},
	Publisher = {{Graduate Studies in Mathematics 128. Providence, RI: American Mathematical Society (AMS). 439 p.}},
	Title = {{Tensors: Geometry and applications.}},
	Year = {2012},
	Bdsk-Url-1 = {http://www.zentralblatt-math.org/zmath/en/search/?format=complete\&q=an:pre05968745}}

\bib{MR2628829}{article}{
	Author = {J.M. Landsberg and Z. Teitler},
	Date-Added = {2011-12-16 14:40:07 +0100},
	Date-Modified = {2011-12-16 14:40:17 +0100},
	Doi = {10.1007/s10208-009-9055-3},
	Fjournal = {Foundations of Computational Mathematics. The Journal of the Society for the Foundations of Computational Mathematics},
	Issn = {1615-3375},
	Journal = {Found. Comput. Math.},
	Mrclass = {14N05 (15A03 15A15 68Q25)},
	Mrnumber = {2628829 (2011d:14095)},
	Mrreviewer = {Enrico Carlini},
	Number = {3},
	Pages = {339--366},
	Title = {On the ranks and border ranks of symmetric tensors},
	Url = {http://dx.doi.org/10.1007/s10208-009-9055-3},
	Volume = {10},
	Year = {2010},
	Bdsk-Url-1 = {http://www.ams.org/mathscinet-getitem?mr=2628829}}

\bib{Landsberg:2011uq}{unpublished}{
	Abstract = {New classes of modules of equations for secant varieties of Veronese varieties are defined using representation theory and geometry. Some old modules of equations (catalecticant minors) are revisited to determine when they are sufficient to give scheme-theoretic defining equations. An algorithm to decompose a general ternary quintic as the sum of seven fifth powers is given as an illustration of our methods. Our new equations and results about them are put into a larger context by introducing vector bundle techniques for finding equations of secant varieties in general. We include a few homogeneous examples of this method.},
	Author = {J.M. Landsberg and G. Ottaviani},
	Date-Added = {2011-12-07 11:19:03 +0100},
	Date-Modified = {2012-09-26 16:49:17 +0200},
	Eprint = {1111.4567v1},
	Month = {11},
	Note = {To appear in Annali di Matematica Pura e Applicata, http://arxiv.org/abs/1111.4567},
	Title = {Equations for secant varieties of Veronese and other varieties},
	Url = {http://arxiv.org/abs/1111.4567v1},
	Year = {2011},
	Bdsk-Url-1 = {http://arxiv.org/abs/1111.4567v1}}

\bib{ML08}{article}{
	Author = {M. Laurent and B. Mourrain},
	Journal = {{A}rchiv der {M}athematik},
	Pages = {87--98},
	Publisher = {{B}irkhauser},
	Title = {{A} {S}parse {F}lat {E}xtension {T}heorem for {M}oment {M}atrices},
	Volume = {93},
	Year = {2009}}

\bib{Mac16}{book}{
	Author = {F.S. Macaulay},
	Optseries = {{Cambridge tracts in Math. and Math. Physics}},
	Optvolume = {19},
	Publisher = {{Cambridge Univ. Press}},
	Title = {{The Algebraic Theory of Modular Systems}},
	Year = {1916}}

\bib{MR2842085}{article}{
	Author = {K. Ranestad and F.-O. Schreyer},
	Coden = {JALGA4},
	Date-Added = {2012-09-26 16:51:27 +0200},
	Date-Modified = {2012-09-27 08:08:23 +0200},
	Doi = {10.1016/j.jalgebra.2011.07.032},
	Fjournal = {Journal of Algebra},
	Issn = {0021-8693},
	Journal = {J. Algebra},
	Mrclass = {13F20},
	Mrnumber = {2842085 (2012j:13037)},
	Mrreviewer = {Jean-Luc Chabert},
	Pages = {340--342},
	Title = {On the rank of a symmetric form},
	Url = {http://dx.doi.org/10.1016/j.jalgebra.2011.07.032},
	Volume = {346},
	Year = {2011},
	Bdsk-Url-1 = {http://www.ams.org/mathscinet-getitem?mr=2842085}}

\bib{Stanley89}{incollection}{
	Address = {New York},
	Author = {R.P. Stanley},
	Booktitle = {Graph theory and its applications: {E}ast and {W}est ({J}inan, 1986)},
	Date-Modified = {2012-01-15 18:05:53 +0100},
	Pages = {500--535},
	Publisher = {New York Acad. Sci.},
	Series = {Ann. New York Acad. Sci.},
	Title = {Log-concave and unimodal sequences in algebra, combinatorics, and geometry},
	Volume = {576},
	Year = {1989}}

\bib{TenbSR04:laa}{article}{
	Author = {J. M. F. ten Berge and N. D. Sidiropoulos and R. Rocci},
	Journal = {Linear Algebra Appl.},
	Keyword = {TenbSR04:laa symmetric vs asymmetric},
	Pages = {363--377},
	Title = {Typical Rank and {INDSCAL} dimensionality for Symmetric Three-Way Arrays of order {Ix2x2} or {Ix3x3}},
	Volume = {388},
	Year = {2004}}

\bib{MR1146921}{book}{
	Address = {Providence, RI},
	Author = {E. Waring},
	Date-Added = {2011-12-16 14:09:50 +0100},
	Date-Modified = {2011-12-16 14:10:00 +0100},
	Isbn = {0-8218-0169-4},
	Mrclass = {01A50 (01A75 11P05 11P32)},
	Mrnumber = {1146921 (93a:01026)},
	Mrreviewer = {Victor J. Katz},
	Note = {Translated from the Latin, edited and with a foreword by Dennis Weeks, With an appendix by Franz X. Mayer, translated from the German by Weeks},
	Pages = {lx+459},
	Publisher = {American Mathematical Society},
	Title = {Meditationes algebraic\ae},
	Year = {1991},
	Bdsk-Url-1 = {http://www.ams.org/mathscinet-getitem?mr=1146921}}
\end{biblist}
\end{bibdiv}

\end{document}